\DeclareMathAlphabet{\mathpzc}{OT1}{pzc}{m}{it}
\newcounter{dummy} \numberwithin{dummy}{section}
\newtheorem{theorem}[dummy]{Theorem}
\newtheorem{corollary}[dummy]{Corollary}
\newtheorem{lemma}[dummy]{Lemma}
\newtheorem{definition}[dummy]{Definition}
\newtheorem{proposition}[dummy]{Proposition}
\theoremstyle{remark}
\newtheorem{remark}[dummy]{Remark}
\newtheorem{example}[dummy]{Example}
\DeclareMathAlphabet{\mathcal}{OMS}{cmsy}{m}{n}
\DeclareFontFamily{U}{mathx}{\hyphenchar\font45}
\DeclareFontShape{U}{mathx}{m}{n}{
      <5> <6> <7> <8> <9> <10>
      <10.95> <12> <14.4> <17.28> <20.74> <24.88>
      mathx10
      }{}
\DeclareSymbolFont{mathx}{U}{mathx}{m}{n}
\DeclareMathAccent{\widecheck}{0}{mathx}{"71}
\DeclareMathAccent{\wideparen}{0}{mathx}{"75}
\DeclareMathOperator{\comp}{\mathbb C}
\DeclareMathOperator{\real}{\mathbb R}
\newcommand{\calE}{\mathcal E}
\newcommand{\calF}{\mathcal F}
\newcommand{\calH}{\mathcal H}
\newcommand{\calL}{\mathcal L}
\newcommand{\calR}{\mathcal R}
\newcommand{\calV}{\mathcal V}
\newcommand{\frakg}{\mathfrak g}
\newcommand{\frakh}{\mathfrak h}
\newcommand{\frakk}{\mathfrak k}
\newcommand{\scrF}{\mathscr F}
\newcommand{\scrM}{\mathscr M}
\newcommand{\scrN}{\mathscr N}
\DeclareMathOperator{\Ann}{Ann}
\DeclareMathOperator{\Sym}{Sym}
\DeclareMathOperator{\End}{End}
\DeclareMathOperator{\inc}{inc}
\DeclareMathOperator{\pr}{pr}
\DeclareMathOperator{\rank}{rank}
\DeclareMathOperator{\spn}{span}
\DeclareMathOperator{\tr}{tr}
\newcommand{\prob}{\mathbb P}
\newcommand{\expect}{\mathbb E}
\DeclareMathOperator{\Ad}{Ad}
\DeclareMathOperator{\ad}{ad}
\DeclareMathOperator{\Oct}{O}
\DeclareMathOperator{\tensorg}{\mathbf g}
\DeclareMathOperator{\dv}{div}
\DeclareMathOperator{\grad}{grad}
\DeclareMathOperator{\Ric}{Ric}
\DeclareMathOperator{\vol}{vol}
\DeclareMathOperator{\dvol}{\text{$d$}vol}
\DeclareMathOperator{\II}{II}
\newcommand{\tensorh}{\mathbf h}
\newcommand{\rnabla}{\mathring{\nabla}}
\newcommand{\znabla}{\,\mathring{\phantom{\nabla}\!\!\!\!\!\!}\mathring{\!\!\nabla}}
\newcommand{\shh}{\sharp^{{\mathbf h}^*}}
\newcommand{\srL}{\Delta_{\mathbf h}}
\newcommand{\roughL}{\Delta_{\mathbf h}^\prime}
\newcommand{\tensorv}{\mathbf v}
\newcommand{\shv}{\sharp^{{\mathbf v}^*}}
\newcommand{\tensors}{\mathbf s}
\newcommand{\tensorq}{\mathbf q}
\newcommand{\metricd}{\mathsf d}
\newcommand{\ve}{\varepsilon}
\newcommand{\sfGamma}{\mathsf \Gamma}
\newcommand{\RicH}{\Ric_{\mathcal{H}}}
\newcommand{\RicHV}{\Ric_{\mathcal{HV}}}
\newcommand{\rRicH}{\rho_{\mathcal{H}}}
\newcommand{\MRicHV}{\mathscr{M}_{\mathcal{HV}}}
\newcommand{\McalR}{\mathscr{M}_{\mathcal{R}}}
\newcommand{\mcalR}{m_{\mathcal{R}}}
\newcommand{\MII}{\mathscr{M}_{\rnabla \tensorv^*}}
\newcommand{\rLv}{\rho_{\roughL \! \tensorv^*}}
\newcommand\newbullet{{\kern.8pt\displaystyle\centerdot\kern.8pt}}
\newcommand\newbulletsub{{\centerdot\kern.8pt}}
\numberwithin{equation}{section}
\title[Curvature-dimension inequalities on sR-manifolds, Part~I]{Curvature-dimension inequalities on sub-Riemannian manifolds obtained from Riemannian foliations, Part~I}
\author[E. Grong, A. Thalmaier]{Erlend Grong \\ Anton Thalmaier}
\address{Mathematics Research Unit, University of Luxembourg, 6 rue Richard Coudenhove-Kalergi, L-1359 Luxembourg}
\email{erlend.grong@uni.lu}
\address{Mathematics Research Unit, University of Luxembourg, 6 rue Richard Coudenhove-Kalergi, L-1359 Luxembourg}
\email{anton.thalmaier@uni.lu}
\subjclass[2010]{58J35 (53C17,58J99)}
\keywords{Hypoelliptic operators; Sub-Riemannian geometry; Curvature-dimension inequality; Spectral gap; Riemannian foliations}
\begin{document}

\begin{abstract}
We give a generalized curvature-dimension inequality connecting the
  geometry of sub-Riemannian manifolds with the properties of its
  sub-Laplacian. This inequality is valid on a large class of
  sub-Riemannian manifolds obtained from Riemannian foliations. We
  give a geometric interpretation of the invariants involved in the
  inequality. Using this inequality, we obtain a lower bound for the
  eigenvalues of the sub-Laplacian. This inequality also lays the foundation for
  proving several powerful results in Part~II. 
\end{abstract}

\maketitle

\section{Introduction} \label{sec:Introduction} On a given connected
manifold $M$, there is a well established relation between elliptic
second order differential operators on $M$ and Riemannian geometries
on the same space. More precisely, for any smooth elliptic operator
$L$ on $M$ without constant term, there exist a unique Riemannian
metric $\tensorg$ on $M$ such that for any pair of smooth functions
$f,g \in C^\infty(M)$,
\begin{equation}\label{GammaLG} \sfGamma(f,g): = \frac{1}{2} \left( L
    (fg) - fLg - g L f \right) = \langle\grad f,\grad
  g\rangle_{\tensorg}. \end{equation}
Conversely, from any Riemannian metric $\tensorg$, we can construct a second order operator satisfying \eqref{GammaLG} in the form of the Laplacian $\Delta$. In addition, the properties of $L$ and $\tensorg$ are intimately connected. Consider the case when $M$ is complete with respect to $\tensorg$ and write~$P_t = e^{t/2\Delta}$ for the heat semigroup corresponding to~$\frac{1}{2} \Delta$. Then the following statements are equivalent for any $\rho \in \real$.
\begin{itemize}
\item[(a)] For any $f \in C_c^\infty(M)$, $\rho \|\grad
  f\|^2_{\tensorg} \leq \Ric_{\tensorg}(\grad f, \grad
  f)$. \vspace{0,2cm}
\item[(b)] For any $f \in C_c^\infty(M)$, $\|\grad P_t
  f\|^2_{\tensorg} \leq e^{-\rho t} P_t \| \grad f\|^2_{\tensorg}.$
  \vspace{0,2cm}
\item[(c)] For any $f \in C_c^\infty(M)$, $\frac{1- e^{- \rho
      t}}{\rho} \| \grad P_t f\|_{\tensorg}^2 \leq P_t f^2 - (P_t
  f)^2.$ \vspace{0,2cm}
\end{itemize}
Here, $C_c^\infty(M)$ is the space of smooth functions on~$M$ with
compact support and $\Ric_{\tensorg}$ is the Ricci curvature tensor
of~$\tensorg$. This equivalence gives us a way of understanding Ricci
curvature in terms of growth of the gradient of a solution to the heat
equation. With appropriate modifications of the Ricci curvature,
the same statement holds for a general elliptic operator $L$
satisfying \eqref{GammaLG}, giving us a geometric tool to study the heat flow of
elliptic operators. See e.g. \cite{Wan04} and references therein for
the full statement.

Let us now consider the case when $L$ is not elliptic, but is rather
given locally in a form
\begin{equation} \label{Ltype} L = \sum_{i=1}^n A_i^2 + \text{first
    order terms}.\end{equation} Here, $A_1, \dots, A_n$ are linearly
independent vector fields and $n$ is strictly less than the dimension of
$M$. In this case, there is still a geometry we can associate with $L$
by considering the subbundle spanned by $A_1, \dots, A_n$ furnished
with a metric tensor that makes these vector fields orthogonal. Such a
geometry is called sub-Riemannian geometry. A sub-Riemannian manifold
is a connected manifold $M$ with a positive definite metric
tensor~$\tensorh$ defined only on a subbundle~$\calH$ of the tangent
bundle $TM$. If we assume that sections of~$\calH$ and their iterated
Lie brackets span the entire tangent bundle, we obtain a metric
space~$(M, \metricd_{cc})$, where the distance between two points is
defined by taking the infimum of the lengths of all curves tangent
to~$\calH$ that connect these
points.

In recent years, understanding how to define curvature in
sub-Riemannian geometry has become an important question. One approach
has been to introduce curvature by studying invariants of the flow of
normal geodesics associated to the sub-Riemannian structure,
see~e.g.~\cite{ZeLi07,LiZe11,ABR13,BaRi14}. The other approach
explores the interaction of the sub-Riemannian gradient and second
order operators of the form~\eqref{Ltype}. We will follow the latter
approach.

The equivalence of statements (a), (b) and (c) mentioned previously is
rooted in 
the curvature-dimension inequality for Riemannian manifolds. In the
notation of Bakry and \'Emery~\cite{BaEm85} this inequality is written
as
$$\sfGamma_2(f) \geq \frac{1}{n} (L f)^2 + \rho \sfGamma(f), \qquad f \in C^\infty(M).$$
Here, $n = \dim M$, $L = \Delta$, $\rho$ is a lower bound for the Ricci
curvature and, for any pair of functions $f,g \in C^\infty(M)$,
\begin{align} \label{Sec1Gamma}
  \sfGamma(f,g) &=  \frac{1}{2} \left(L (fg) - f L g - g L f \right) = \langle\grad f,\grad g\rangle_{\tensorg}, &\sfGamma(f) = \sfGamma(f,f), \\
  \label{Sec1Gamma2}
  \sfGamma_2(f, g) &= \frac{1}{2} \left(L \sfGamma(f,g) - \sfGamma(f
    ,L g) - \sfGamma(L f, g)\right) , &\sfGamma_2(f) =
  \sfGamma_2(f,f).
\end{align}
Even in simple cases, this inequality fails in the sub-Riemannian
setting. The following generalization has been suggested by F.~Baudoin
and N.~Garofalo in~\cite{BaGa12}.

Let~$\tensorh$ be a sub-Riemannian metric defined on a
subbundle~$\calH$ of~$TM$. Let~$L$ be any second order operator as in
\eqref{Ltype}, i.e. locally given as a sum of squares of an orthonormal basis of $\calH$ plus a first order term.
We remark that, unlike the Laplace operator on a
Riemannian manifold, the operator~$L$ is not uniquely determined
by~$\tensorh$ unless we add some additional structure such as a
chosen preferred volume form on~$M$. Define $\sfGamma$ and
$\sfGamma_2$ as in \eqref{Sec1Gamma} and \eqref{Sec1Gamma2}. For any
positive semi-definite section $\tensorv^*$ of $\Sym^2 TM$, define
$\sfGamma^{\tensorv^*}(f,g) = \tensorv^*(df,dg)$ and
$\sfGamma^{\tensorv^*}(f) = \sfGamma^{\tensorv^*}(f,f).$ Let
$\sfGamma^{\tensorv^*}_2(f)$ be defined analogous to 
$\sfGamma_2(f)$ in \eqref{Sec1Gamma2}. Then $L$ is said to satisfy a
generalized curvature-dimension inequality if we can choose
$\tensorv^*$ such that for every $\ell >0,$
\begin{equation} \label{BaGaCD} \sfGamma_2(f) + \ell\,
  \sfGamma^{\tensorv^*}_2(f) \geq \frac{1}{n} (Lf)^2 + \left(\rho_1 -
    \ell^{-1} \right) \sfGamma(f) + \rho_2
  \sfGamma^{\tensorv^*}(f),\end{equation} for some $1\leq n \leq
\infty$, $\rho_1 \in \real$ and $\rho_2 > 0$. Using this inequality, the
authors were able to prove several results, such as gradient bounds,
Li-Yau type inequality and a sub-Riemannian version of the
Bonnet-Myers theorem. See also further results based of the same
formalism in~\cite{BaBo12,BBG14,BaWa14,Hla13}.

So far, the examples of sub-Riemannian manifolds satisfying
\eqref{BaGaCD} all have a complement to $\calH$ spanned by the
sub-Riemannian analogue of Killing vector fields. We want to show that
a further generalization of \eqref{BaGaCD} holds for a larger
class of sub-Riemannian manifolds. We also want to give an
interpretation for the constants involved in the curvature-dimension
inequality. Results following from this inequality are important for
sub-Riemannian geometry, understanding solutions of the heat equation
of operators $L$ described locally as in \eqref{Ltype}, along with the
stochastic processes which have these operators as their infinitesimal
generators.

In order to motivate our approach, let us first consider the following
example. Let~$\pi: M \to B$ be a submersion between two connected
manifolds and let $\widecheck{\tensorg}$ be a Riemannian metric
on~$B$. Let~$\calV = \ker \pi_*$ be the vertical bundle and let
$\calH$ be an Ehresmann connection on $\pi$, that is, a subbundle such
that $TM = \calH \oplus \calV$. We then define a sub-Riemannian metric
$\tensorh$ on~$M$ by pulling back the Riemannian metric,
i.e.~$\tensorh = \pi^* \widecheck{\tensorg}|_{\calH}$. In this case,
we have two notions of curvature that could be expected to play a role
for the inequality of type \eqref{BaGaCD}, namely the Ricci curvature
of $B$ and the curvature of the Ehresmann connection $\calH$ (see
Section~\ref{sec:GeometryRnabla} for definition). After all, our
sub-Riemannian structure is uniquely determined by a metric on $B$ and
a choice of Ehresmann connection. For this reason, examples of this
type should be helpful in providing a geometric understanding of
curvature in sub-Riemannian geometry. However, we have to deal with
the following two challenges.
\begin{enumerate}[(i)]
\item Even though the sub-Riemannian geometry on $M$ can be considered
  as ``lifted'' from $B$ the same cannot be said for our operator
  $L$. That is, if $L$ is of the type \eqref{Ltype},
  $\widecheck{\Delta}$ is the Laplacian on $B$ and $f \in C^\infty(B)$
  is a smooth function on $B$, then $L(f \circ \pi)$ does not coincide
  with $(\widecheck{\Delta} f) \circ \pi$ in general.
\item The same sub-Riemannian structure on $M$ can sometimes be
  considered as lifted from two different Riemannian manifolds (see
  Section~\ref{sec:2Vcomp} for an example).
\end{enumerate}
Our approach to overcome these challenges will be the following.

In Section~\ref{sec:Definitions}, we introduce the basics of
sub-Riemannian manifolds and sub-Laplacians. We overcome the
challenges of (i) and (ii) by introducing a unique way of choosing $L$
relative to a complemental subbundle of $\calH$ rather than a volume
form. This will have exactly the desired ``lifting property''. We
discuss the diffusions of such operators in terms of stochastic
development. In Section~\ref{sec:CurvatureDimension} we introduce a
preferable choice of complement, which we call metric-preserving
complements. Roughly speaking, such complements correspond to
Riemannian foliations. While such a complement may not always exist,
all sub-Riemannian manifolds discussed so far have such a complement.
We give geometric conditions for when a sub-Riemannian manifold with a
metric-preserving complement satisfies a generalization of the
curvature-dimension inequality~\eqref{BaGaCD}. From this inequality,
we immediately get a result on the spectral gap of $L$ found in
Section~\ref{sec:Examples}. In the same section, we also apply our results to some examples. 

In Part~II we will look at further consequences of the
curvature-dimension inequality in Theorem~\ref{th:CD}. A short summary
of these results are given Section~\ref{sec:PartII}

In parallel with the development of our paper, a generalized curvature-dimension appeared in \cite{BKW14} for the case of sub-Riemannian manifolds obtained from Riemannian foliations with totally geodesic leaves that are of Yang-Mills type. See Remark~\ref{re:Baudoin} for details.

\subsection{Notations and conventions}
Unless otherwise stated, all manifolds are connected. If $\calE \to M$
is a vector bundle over a manifold $M$, its space of smooth sections
is written $\Gamma(\calE)$. If $s \in \Gamma(\calE)$ is a section, we generally
prefer to write $s|_x$ rather than $s(x)$ for its value in $x \in
M$. By a metric tensor $\tensors$ on $\calE$, we mean a smooth section
of $\Sym^2 \calE^*$ which is positive definite or at least positive
semi-definite. For every such metric tensor, we write
$\|e\|_{\tensors} = \sqrt{\tensors(e,e)}$ for any $e \in \calE$ even
if $\tensors$ is only positive semi-definite. All metric tensors are
denoted by bold, lower case Latin letters (e.g. $\tensorh,
\tensorg,\dots$). We will only use the term Riemannian metric for a
positive definite metric tensor on the tangent bundle. If $\tensorg$
is a Riemannian metric, we will use $\tensorg^*$, $\wedge^k
\tensorg^*, \dots$ for the metric tensors induced on $T^*M$,
$\bigwedge^k T^*M, \dots$.

If $\alpha$ is a form on a manifold $M$, its contraction or interior
product by a vector field $A$ will be denoted by either
$\iota_A\alpha$ or $\alpha(A, \newbullet).$ We use $\calL_A$ for the
Lie derivative with respect to $A$.  If $M$ is furnished with a Riemannian metric $\tensorg$, any bilinear tensor $\tensors: TM \otimes TM \to \mathbb{R}$ can be identified with an endomorphism of $TM$ using $\tensorg$. We use the notation $\tr \, \tensors(\times, \times)$ for the trace of this corresponding endomorphism, with the metric being implicit. If $\calH$ is a subbundle of $TM$, we will also use the notation $\tr_{\calH} \tensors(\times, \times ) := \tr \tensors(\pr_{\calH} \times, \pr_{\calH} \times)$, where $\pr_{\calH}$ is the orthogonal projection to $\calH$.


\section{Sub-Riemannian manifolds and
  sub-Laplacians} \label{sec:Definitions}

\subsection{Definition of a sub-Riemannian
  manifolds} \label{sec:srmanifolds} {\it A sub-Riemannian manifold}
can be considered as a triple $(M, \calH, \tensorh)$ where $M$ is a
connected manifold, $\calH$ is a subbundle of $TM$ and $\tensorh$ is a
positive definite metric tensor defined only on the subbundle
$\calH$. The pair $(\calH, \tensorh)$ is called {\it a sub-Riemannian
  structure} on $M$. Any sub-Riemannian structure induces a vector
bundle morphism
$$\shh\colon T^*M \to TM,$$
determined by the properties $\shh(T^*M) = \calH$ and $p(v) =
\tensorh(v,\shh p)$ for any $p \in T^*M$ and $v \in \calH$. The kernel
of $\shh$ is the subbundle $\Ann(\calH) \subseteq T^*M$ of all
elements of $T^*M$ that vanish on $\calH$. We can define a co-metric
$\tensorh^*$ on $T^*M$ by
$$\tensorh^*(p_1, p_2) = p_1(\shh p_2), \qquad p_1, p_2 \in T_x^*M,\ x \in M,$$
which obviously degenerates along $\Ann(\calH)$. A sub-Riemannian
manifold can therefore equivalently be considered as a pair $(M,
\tensorh^*)$ where $M$ is a connected manifold and $\tensorh^*$ a
co-metric degenerating along a subbundle of $T^*M$. We will use both
of these point of views throughout our paper, referring to the
sub-Riemannian structure $(\calH, \tensorh)$ and $\tensorh^*$
interchangeably.

We will call any absolutely continuous curve $\gamma$ in $M$ {\it
  horizontal} if $\dot \gamma(t) \in \calH_{\gamma(t)}$ for almost all $t$.  We
define {\it the Carnot-Carath\'eodory distance} $\metricd_{cc}$ on $M$
as
$$\metricd_{cc}(x, y)  = \inf_\gamma \left\{\int_0^1 \tensorh(\dot{\gamma}, \dot{\gamma})^{1/2} \, dt \, : \, \gamma(0) = x ,\ \gamma(1) = y,\ \gamma \text{ horizontal } \right\}.$$
This distance is finite for any pair of points if they can be
connected by at least one horizontal curve. A sufficient condition for
the latter to hold is that $\calH$ is {\it
  bracket-generating}~\cite{Cho39,Ras38}. A subbundle~$\calH$ is
called bracket-generating if its sections and their iterated brackets
span the entire tangent bundle. The same property also guarantees that
the metric topology induced by~$\metricd_{cc}$ coincides with the
manifold topology on $M$, however, the Hausdorff dimension of
$\metricd_{cc}$ will in general be greater than the topological
dimension (see e.g.~\cite[Th. 2.3, Th. 2.17 ]{Mon02}).

From now on, the rank of $\calH$ is $n \geq 2$ while the manifold $M$
is assumed to have dimension $n + \nu.$ We will refer to $\calH$ as
{\it the horizontal bundle} and its vectors and sections as {\it
  horizontal} and we refer to both $(\calH, \tensorh)$ and
$\tensorh^*$ as a sub-Riemannian structure on $M$.


\subsection{Second order operators associated to $\tensorh^*$}
For any manifold $M$, let $T^2M$ denote the second order tangent
bundle. Sections $L \in \Gamma(T^2M)$ of this bundle can locally be
expressed as
\begin{equation} \label{localExpression} L = \sum_{i,j=1}^{n+\nu} L_{ij} \frac{\partial}{\partial x_i \partial x_j}  + \sum_{j=1}^{n+\nu} L_j \frac{\partial}{\partial x_j} \, ,\end{equation}
relative to some local coordinate system $(x_1, \dots, x_{n+\nu})$ and
functions $L_{ij} = L_{ji}$ and~$L_j$.  We can consider $TM$ as a
subbundle of $T^2M$. Denote its inclusion by $\inc$. This gives us a
short exact sequence
$$0 \to TM \stackrel{\inc}{\longrightarrow} T^2M \stackrel{\tensorq \, \,}{\longrightarrow} \Sym^2 TM \to 0,$$
where $\tensorq(L) =: \tensorq_L$ is the symmetric bilinear tensor on
$T^*M$ defined by the property
\begin{equation} \label{qform} \tensorq_L(df, dg) = \frac{1}{2}
  \left(L(fg) - f Lg - g Lf\right), \qquad \text{for any } f,g \in
  C^\infty(M).\end{equation} In local coordinates, we can write $\tensorq_L(df,dg) =
\sum_{i,j=1}^{n+\nu} L_{ij} \frac{\partial f}{\partial x_i}
\frac{\partial g}{\partial x_j}$ relative to the representation of $L$ in \eqref{localExpression}.

Let $\tensorh^*$ be the co-metric corresponding to a sub-Riemannian
structure $(\calH, \tensorh)$. Then any operator $L$ satisfying
$\tensorq_L = \tensorh^*$ can locally be written as
$$L = \sum_{i=1}^n A_i^2 + A_0$$
where $A_0$ is a vector field and $A_1, \dots, A_n$ is a local
orthonormal basis of $\calH$. From H\"ormander's celebrated result
\cite{Hor67}, we know that any such operator is hypoelliptic when
$\calH$ is bracket-generating. We consider two examples where a choice
of extra structure on $(M, \calH, \tensorh)$ gives a differential
operator of this type.

Let $\vol$ be a volume form on $M$. We define the {\it sub-Laplacian
  relative to} $\vol$ as the operator $\srL$ given by
$$\srL f: = \dv \shh df,$$
where $\dv A$ is defined by $\calL_A \vol = (\dv A) \vol$. Any such
operator satisfies
$$\int_M g \srL f \, \dvol = \int_M f \srL g \, \dvol$$
for any pair of functions $f,g \in C_c^\infty(M)$ of compact
support. Since $L$ is also hypoelliptic, it has a smooth, symmetric
heat kernel with respect to $\vol.$ This is the most common way of
defining the sub-Laplacian.

We like to introduce an alternative notion of sub-Laplacian. Rather
than choosing a volume, we will choose a complement $\calV$ to
$\calH$, i.e. a subbundle $\calV$ such that $TM = \calH \oplus
\calV$. This choice of complement gives us projections $\pr_{\calH}$
and $\pr_{\calV}$ to respectively~$\calH$ and~$\calV$. A Riemannian
metric $\tensorg$ on $M$ is said to {\it tame} $\tensorh$ if
$\tensorg|_{\calH} = \tensorh.$ Consider any Riemannian metric
$\tensorg$ that tames $\tensorh$ and makes $\calV$ the orthogonal
complement of $\calH$. Let $\nabla$ be the Levi-Civita connection of
$\tensorg$. It is simple to verify that for any pair of horizontal
vector fields $A,Z \in \Gamma(\calH)$, $\pr_{\calH} \nabla_{A} Z$ is
independent of~$\tensorg|_{\calV}$. This fact allows us to define a
second order operator $\roughL$ which we call \emph{the sub-Laplacian
  with respect to} $\calV$. There are several ways to introduce this
operator. We have chosen to define it by using a connection $\rnabla$
which will be helpful for us later. Corresponding to the Riemannian
metric $\tensorg$ and the orthogonal splitting $TM = \calH
\oplus_{\perp} \calV$, we introduce the connection
\begin{align} \label{rnabla}
  \rnabla_A Z := & \pr_{\calH} \nabla_{\pr_{\calH} A} \pr_{\calH} Z + \pr_{\calV} \nabla_{\pr_{\calV} A} \pr_{\calV} Z  \\
  \nonumber & + \pr_{\calH} [ \pr_{\calV} A, \pr_{\calH} Z] +
  \pr_{\calV} [ \pr_{\calH} A, \pr_{\calV} Z].
\end{align}
\begin{definition} \label{def:srL} Let $\calV$ be a complement of
  $\calH$ corresponding to the projection $\pr_{\calH}$, i.e. $\calV =
  \ker \pr_{\calH}$. Then the sub-Laplacian with respect to $\calV$ is
  the operator
$$\roughL f := \tr_{\calH} \rnabla^2_{\times, \times} f, \quad f \in C^\infty(M),$$
where $\rnabla^2_{A,Z} = \rnabla_{A} \rnabla_Z - \rnabla_{\rnabla_A
  Z}$ is the Hessian of $\rnabla$.
\end{definition}
We remark that the definition only depends on the value of $\rnabla_A
Z$ when both $A$ and $Z$ take values in $\calH$. This is
illustrated by the fact that locally
\begin{equation} \label{localRoughL} \roughL f = \sum_{i=1}^n A_i^2 f
  + \sum_{i, j=1}^n \tensorh( \pr_{\calH} \nabla_{A_i} A_j, A_i) A_j
  f\end{equation} where $A_1, \dots, A_n$ is a local orthonormal
basis of $\calH$.  The operator $\roughL$ is hypoelliptic and will
have a smooth heat kernel with respect to any volume form.
Two different choices of complement may have the same
sub-Laplacian, see Section~\ref{sec:2Vcomp}.

In what follows, whenever we have a chosen complement $\calV$, we will
refer to it as {\it the vertical bundle} and its vectors and vector
fields as {\it vertical}.

\begin{remark} \label{re:regularH} The horizontal bundle $\calH$ is
  called \emph{equiregular} if there exist a flag of subbundles
$$\calH = \calH^1 \subseteq \calH^2 \subseteq \calH^3 \subseteq \cdots$$
such that $$\calH^{k+1} = \spn\left\{ Z|_x,\, [A,Z]|_x \, \colon \, Z \in
\Gamma(\calH^k), \ A \in \Gamma(\calH),\ x \in M \right\}.$$
Even if $\calH$ is bracket-generating, it is not necessarily equiregular. 
We emphasize that each $\calH^k$ is required to be a subbundle, 
and so must have constant rank. The smallest
integer $r$ such that $\calH^r = TM$ is called \emph{the step} of
$\calH$. If $(\calH, \tensorh)$ is a sub-Riemannian structure on $M$
with $\calH$ equiregular, then there exist a canonical choice of volume
form on $M$ called Popp's measure. For construction, see
Section~\ref{sec:PrivMetric} or see~\cite{ABGR09} for a more detailed
presentation.
\end{remark}

\subsection{Lifting property of the sub-Laplacian defined relative to a complement} \label{sec:LiftedLaplacian}
Let $\pi: M \to B$ be a surjective submersion between connected
manifolds $M$ and $B$. \emph{The vertical bundle of} $\pi$ is the
subbundle $\calV := \ker \pi_*$ of $TM$. \emph{An Ehresmann connection
  on} $\pi$ is a splitting $h$ of the short exact sequence
$$0 \longrightarrow  \calV = \ker \pi_* \longrightarrow\xymatrix{TM \ar[r]_{\pi_*} & \pi^* TB \ar@/_/[l]_h} \longrightarrow 0.$$ 
This map $h$ is uniquely determined by $\calH = \text{image } h$,
which is a subbundle of $TM$ satisfying $TM = \calH \oplus
\calV$. Hence, we refer to such a subbundle $\calH$ as an Ehresmann connection as
well. The image of an element $(x, \check{v}) \in \pi^*TB$ under $h$
is called the horizontal lift of $\check{v}$ to $x$, and denoted $h_x
\check{v}$. Similarly, for any vector field $\check{A}$ on $B$, we
have a vector field $h\check{A}$ on $M$ defined by $x \mapsto h_x
\check{A}|_{\pi(x)}$.

We can extend the notion of horizontal lifts to second order vectors
and differential operators. If $B$ and $M$ are two manifolds, then a
linear map $\varphi: T^2_b B \to T^2_x M$ is called a {\it Schwartz
  morphism} if $\varphi(T_bM) \subseteq T_xM$ and the following diagram
commutes
$$\xymatrix{
  0 \ar[r] & T_xM \ar[r]^{\inc} & T^2_xM \ar[r]^{\tensorq} & \Sym^2 T_xM \ar[r] & 0 \\
  0 \ar[r] & T_bB \ar[r]^{\inc} \ar[u]_{\varphi|_{T_bB}} & T^2_bB
  \ar[r]^{\tensorq} \ar[u]_{\varphi} & \Sym^2 T_bB \ar[r] \ar[u]_{\varphi|_{T_bB}
    \otimes \varphi|_{T_bB} } & 0 } $$
with $\tensorq$ defined as in \eqref{qform}. We remark that any linear
map $\varphi\colon T_b^2B \to T^2_x M$ is a Schwartz morphism if and only if $\varphi =
f_* |_{T^2_bB}$ for some map $f\colon B \to M$ with $f(b) = x$ (see
e.g. \cite[p. 80]{Eme89}). Let $\pi\colon M \to B$ be a surjective
submersion. A $2$-\emph{connection on} $\pi$ is then a splitting $h^S$
of the short exact sequence
$$0 \longrightarrow \ker \pi_* \longrightarrow \xymatrix{T^2M
  \ar[r]_{\pi_*} & \pi^* T^2B \ar@/_/[l]_{h^S}} \longrightarrow 0$$
 such that $h^S$ is a Schwartz morphism at any point.

For any choice of Ehresmann connection $h$ on $\pi$, we can construct
a corresponding 2-connection $h^S$ uniquely determined by the
following two requirements (see e.g. \cite[pp 82--83]{Mey81}).
\begin{enumerate} [$\bullet$]
\item $h^S|_{TM} = h$,
\item $h^S\{ \check{A}, \check{Z}\} = \{ h\check{A}, h\check{Z}\}$
  where $\{\check{A}, \check{Z}\} = \frac{1}{2} (\check{A}\check{Z} +
  \check{Z} \check{A})$ is the skew-commutator and $\check{A},
  \check{Z} \in \Gamma(TB)$. Equivalently, $h^S(\check{A} \check{Z}) =
  h\check{A} h\check{Z} - \frac{1}{2} \pr_{\calV} [h\check{A},
  h\check{Z}].$
\end{enumerate}
Using this 2-connection, we can define horizontal lifts of second
order operators on~$B$. We then have the following way to interpret
the sub-Laplacian with respect to a complement.

\begin{proposition} \label{prop:liftingL} Let $\widecheck{\tensorg}$
  be a Riemannian metric on $B$ with Laplacian
  $\widecheck{\Delta}$. Relative to an Ehresmann connection $\calH$ on
  $\pi$, define a sub-Riemannian structure $(\calH, \tensorh)$ by
  $\tensorh = \pi^* \tensorg|_{\calH}$. Then $h^S \widecheck{\Delta}=
  \roughL$ where $\roughL$ is the sub-Laplacian of~$\calV = \ker
  \pi_*$.
\end{proposition}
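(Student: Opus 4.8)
The plan is to evaluate both operators in a common local frame and to reduce the identity to an O'Neill-type relation between the two Levi-Civita connections. First I would note that the Schwartz-morphism diagram defining $h^S$ intertwines $\tensorq$ with $h \otimes h$, i.e. $\tensorq(h^S \widecheck{\Delta}) = (h \otimes h)\,\tensorq(\widecheck{\Delta}) = (h \otimes h)\widecheck{\tensorg}^* = \tensorh^*$; since $\roughL$ also has principal symbol $\tensorh^*$, it suffices to match the first-order parts of the two operators. To do this concretely, fix $b \in B$ and a local $\widecheck{\tensorg}$-orthonormal frame $\check{A}_1, \dots, \check{A}_n$ of $TB$ near $b$. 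Because $\pi_* h\check{A}_i = \check{A}_i$ and $\tensorh = \pi^* \widecheck{\tensorg}|_{\calH}$, the horizontal lifts $A_i := h\check{A}_i$ form a local $\tensorh$-orthonormal frame of $\calH$ along the fiber over $b$.

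Writing $\widecheck{\Delta} = \sum_{i=1}^n \check{A}_i^2 - \sum_{i=1}^n \widecheck{\nabla}_{\check{A}_i} \check{A}_i$, where $\widecheck{\nabla}$ is the Levi-Civita connection of $\widecheck{\tensorg}$, I would apply $h^S$ term by term, using that it is a fibrewise linear bundle map. The product rule for the $2$-connection gives $h^S(\check{A}_i^2) = h\check{A}_i\, h\check{A}_i - \tfrac12 \pr_{\calV}[h\check{A}_i, h\check{A}_i] = A_i^2$, the correction vanishing since $[h\check{A}_i, h\check{A}_i] = 0$; and $h^S|_{TM} = h$ handles the first-order term. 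Thus $h^S\widecheck{\Delta} = \sum_i A_i^2 - \sum_i h(\widecheck{\nabla}_{\check{A}_i} \check{A}_i)$. On the other side, Definition~\ref{def:srL} together with \eqref{rnabla} gives, since the $A_i$ are horizontal, $\rnabla_{A_i} A_i = \pr_{\calH} \nabla_{A_i} A_i$, whence $\roughL = \sum_i \rnabla^2_{A_i, A_i} = \sum_i A_i^2 - \sum_i \pr_{\calH} \nabla_{A_i} A_i$. Comparing the two expressions, the proposition reduces to the frame-free identity $\pr_{\calH} \nabla_{h\check{A}} h\check{Z} = h(\widecheck{\nabla}_{\check{A}} \check{Z})$ for all $\check{A}, \check{Z} \in \Gamma(TB)$.

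I expect this last identity to be the main obstacle, and I would establish it with the Koszul formula. The ingredients are that horizontal lifts are $\pi$-related to their projections, so that $(h\check{A})(g \circ \pi) = (\check{A} g) \circ \pi$ for $g \in C^\infty(B)$, that $[h\check{A}, h\check{Z}]$ is $\pi$-related to $[\check{A}, \check{Z}]$, giving $\pr_{\calH}[h\check{A}, h\check{Z}] = h[\check{A}, \check{Z}]$, and that $\tensorg(h\check{X}, h\check{Y}) = \widecheck{\tensorg}(\check{X}, \check{Y}) \circ \pi$. Pairing the Koszul formula for $\nabla$ on $M$ against a third horizontal lift $W = h\check{W}$ and using that only the $\calH$-part of each bracket contributes, every term reduces to the $\pi$-pullback of the corresponding term of the Koszul formula for $\widecheck{\nabla}$ on $B$; this yields $\tensorg(\nabla_{h\check{A}} h\check{Z}, W) = \widecheck{\tensorg}(\widecheck{\nabla}_{\check{A}} \check{Z}, \check{W}) \circ \pi = \tensorg(h(\widecheck{\nabla}_{\check{A}} \check{Z}), W)$. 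Since $W$ ranges over all horizontal lifts and $h(\widecheck{\nabla}_{\check{A}} \check{Z})$ is horizontal, the $\calH$-components agree, which is the desired identity. Substituting back completes the proof; the only point requiring care beyond this is that the termwise application of $h^S$ is legitimate because $h^S$ is canonical and fibrewise linear, so the computation is independent of the chosen frame.
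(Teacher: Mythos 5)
Your proposal is correct and takes essentially the same route as the paper: both lift a local $\widecheck{\tensorg}$-orthonormal frame $\check{A}_1,\dots,\check{A}_n$, apply $h^S$ term by term (the vertical correction vanishing on squares of a single lift), and reduce everything to the O'Neill-type identity $\pr_{\calH}\nabla_{h\check{A}}h\check{Z} = h(\widecheck{\nabla}_{\check{A}}\check{Z})$, which the paper records in the scalar form $\widecheck{\tensorg}(\widecheck{\nabla}_{\check{A}_i}\check{A}_j,\check{A}_i) = \tensorg(\rnabla_{h\check{A}_i}h\check{A}_j,h\check{A}_i)$. Your Koszul-formula verification simply spells out what the paper leaves implicit in citing the relations $\tensorg(h\check{A}_i,h\check{A}_j)=\widecheck{\tensorg}(\check{A}_i,\check{A}_j)$ and $\pr_{\calH}[h\check{A}_i,h\check{A}_j]=h[\check{A}_i,\check{A}_j]$.
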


In particular, for any $f \in C^\infty(B)$, we have $\roughL (f \circ \pi) = (\widecheck{\Delta} f) \circ \pi$.
A submersion $\pi:(M, \tensorg) \to (B, \widecheck{\tensorg})$ between two Riemannian manifolds such that
$$\tensorg|_{\calH} = \pi^* \widecheck{\tensorg}|_{\calH}, \qquad  \calH
= (\ker \pi_*)^\perp$$ is called \emph{a Riemannian
  submersion}. The sub-Riemannian manifolds of
Proposition \ref{prop:liftingL} can hence be considered as the result
of restricting the metric on the top space in a Riemannian submersion
to its horizontal subbundle.

\begin{proof}[of Proposition {\upshape\ref{prop:liftingL}}]
  Let $\tensorg$ be a Riemannian metric on $M$ satisfying
  $\tensorg|_{\calH} = \tensorh$ and $\calH^\perp = \calV$. Let $\check{A}_1,\dots, \check{A}_n$ be any
  local orthonormal basis of $TB$. Then the Laplacian can be written
  as
$$\widecheck{\Delta} = \sum_{i=1}^n \check{A}_i^2 + \sum_{i,j=1}^n \widecheck{\tensorg}(\widecheck{\nabla}_{\check{A}_i} \check{A}_j, \check{A}_i) \check{A}_j$$
where $\widecheck{\nabla}$ is the Levi-Civita connection of
$\widecheck{\tensorg}$. However, since $\tensorg(h\check{A}_i,
h\check{A}_j) = \widecheck{\tensorg}(\check{A}_i, \check{A}_j)$ and
since $\pr_{\calH} [h\check{A}_i, h\check{A}_j] = h[\check{A}_i,
\check{A}_j]$, we obtain
$$\widecheck{\tensorg}(\widecheck{\nabla}_{\check{A}_i} \check{A}_j, \check{A}_i) = \tensorg(\rnabla_{h\check{A}_i} h\check{A}_j, h\check{A}_i), \qquad i,j =1,2, \dots, n.$$
The result follows from \eqref{localRoughL} and the fact that
$h\check{A}_1, \dots, h\check{A}_n$ forms a local orthonormal basis
of~$\calH$.\qed
\end{proof}

Since the proof of Proposition~\ref{prop:liftingL} is purely local, it
also holds on Riemannian foliations. To be more specific, a subbundle
$\calV$ of $TM$ is \emph{integrable} if $[V_1, V_2]$ takes its values in
$\calV$ whenever $V_1,V_2 \in \Gamma(\calV)$ are vertical vector
fields. From the Frobenius theorem, we know that there exists a
foliation $\calF$ of $M$ consisting of immersed submanifolds of
dimension $\nu = \rank \calV$ such that each leaf is tangent to $\calV$.

A Riemannian metric $\tensorg$ on $M$ with a foliation induced by
$\calV$, is called \emph{bundle-like} if $\calV^{\perp} = \calH$ and
for any $A \in \Gamma(\calH)$ and $V \in \Gamma(\calV)$, we have
$(\calL_V \tensorg)(A,A) = 0$. Intuitively, one can think of a bundle-like metric
$\tensorg$ as a metric where $\tensorg|_{\calH}$ does ``not change''
in vertical directions. A foliation $\calF$ of a Riemannian manifold is called
\emph{Riemannian} if the metric is bundle-like with respect to $\calF$. Such a manifold
locally has the structure of a Riemannian submersion, that is, any
point has a neighborhood $U$ such that $\pi: U \to B:= U/(\calF|U)$
can be considered as a smooth submersion of manifolds. The subbundle $\calH|_U$ is an
Ehresmann on $\pi$ and $B$ can be given a Riemannian metric
$\widecheck{\tensorg}$ such that $\pr_{\calH}^* \tensorg = \pi^*
\widecheck{\tensorg}$, see~\cite{Rei59}. If we define a sub-Riemannian
structure $(\calH, \tensorh)$ on~$M$ with $\tensorh =
\tensorg|_{\calH}$, then restricted to each sufficiently small
neighborhood $U$, the sub-Laplacian $\roughL$ of $\calV$ is equal to
$h^S \widecheck{\Delta}$ were $\widecheck{\Delta}$ is the Laplacian on
$U/(\calF|U)$.

\begin{remark}
  By modifying the proof of Proposition~\ref{prop:liftingL} slightly,
  we can also get the following stronger statement: Let $(B, \calH_1,
  \tensorh_1)$ be a sub-Riemannian manifold and let $\pi: M \to B$ be
  a submersion with an Ehresmann connection $\calE$. Define a
  subbundle $\calH_2$ on $M$ as the horizontal lifts of all vectors in
  $\calH_1$ with respect to $\calE$ and let $\tensorh_2 = \pi^*
  \tensorh_1|_{\calH_2}$ be the lifted metric. Let $\calV_1$ be a choice of complement of
  $\calH_1$ and define $\calV_2$ as the direct sum of the horizontal
  lift of $\calV_1$ and $\ker \pi_*$. Then, if
  $\Delta_{\calH_j}^\prime$ is the sub-Laplacian with respect to
  $\calV_j, j=1,2$, we have $\Delta_{\calH_2}^\prime =
  h^S\Delta_{\calH_1}^\prime$, where $h^S$ is also defined with respect to $\calE$.
\end{remark}

\subsection{Comparison between the sub-Laplacian of a complement and a
  volume form} \label{sec:CompVSvol} Let $(M, \calH, \tensorh)$ be a sub-Riemannian manifold
and let $\tensorg$ be a Riemannian metric taming~$\tensorh$. Let
$\srL$ be the sub-Laplacian defined with respect to the volume form of
$\tensorg$ and let $\roughL$ be defined relative to the complement $\calH^\perp =
\calV$. We introduce a vector field $N$ by formula
$$\srL = \roughL - N.$$
It can then be verified that $N$ is horizontal and can be
defined by the relation
\begin{equation} \label{MeanCurv} \tensorg(A, N) = - \frac{1}{2} \tr_{\calV}
  (\calL_{\pr_{\calH} A} \tensorg)(\times ,
  \times).\end{equation} 
In order for $\roughL$ to be the sub-Laplacian with respect to some volume form, we must have $N = - \shh d\phi$ for some function $\phi \in C^\infty(M)$. Indeed, if $\dv A$ denotes the divergence of a vector field $A$ with respect to $\vol$, then $\dv A + d\phi(A)$ is its divergence with respect to $e^\phi \vol$. It follows that the sub-Laplacian of $e^\phi \vol$ is given as $\srL f + (\shh d\phi) f.$

\begin{remark}
  Let $(M, \tensorg)$ be a Riemannian manifold with a foliation given
  by an integrable subbundle $\calV$. Assume that $\tensorg$ is
  bundle-like relative to $\calV$. Let $\calH$ be the orthogonal
  complement of $\calV$ and define $\tensorh =
  \tensorg|_{\calH}$. Write $\vol$ for the volume form of
  $\tensorg$. Let $\srL$ and $\roughL$ be the sub-Laplacian of $(M,
  \calH, \tensorh)$ relative to respectively $\vol$ and $\calV$. Then
  $N = \roughL - \srL$ is the mean curvature vector field of the
  leaves of the foliations by \eqref{MeanCurv}. Hence, the operators
  $\srL$ and $\roughL$ coincide in this case if and only if the leafs of
  the foliation are minimal submanifolds.
\end{remark}

\subsection{Diffusion of $\roughL$}
Let $L$ be any section of $T^2M$ with $\tensorq_L$ being positive
semi-definite and let $x \in M$ be any point. Then, by \cite[Theorems 1.3.4 and
1.3.6]{Hsu02} there exists an $L$-diffusion $X_t = X_t(x)$
satisfying $X_0 = x$, unique in law, defined up to some
explosion time $\tau = \tau(x)$. {\it An $L$-diffusion} $X_t$ is an $M$-valued semimartingale up to some stopping time
$\tau$ defined on some filtered probability space $(\Omega,
\scrF_{\! \newbulletsub}, \prob)$, such that for any $f \in C^\infty(M)$,
$$M_t^f := f( X_t) - f( X_0) - \int_0^t L f( X_s) ds, \quad 0 \leq t < \tau,$$
is a local martingale up to $\tau$. We will always assume that
$\tau$ is maximal, so that $\tau$ is the explosion time,
i.e. $\{ \tau < \infty\} \subseteq \{ \lim_{t \uparrow \tau} X_t = \infty\} \text{ almost surely.}$

Let $\roughL$ be the sub-Laplacian defined with respect to a choice of
complement~$\calV$. Let~$\tensorg$ be a Riemannian metric such that
$\calH^\perp = \calV$ and $\tensorg|_{\calH} = \tensorh$. Define
$\rnabla$ on as in~\eqref{rnabla}. To simplify our presentation, we
will assume that $\rnabla \tensorg = 0$. See Remark~\ref{re:znabla}
for the general case. For a given point $x \in M$, let $\gamma(t)$ be
any smooth curve in $M$ with $\gamma(0) = x$. Let $\phi_1, \dots,
\phi_n$ and $\psi_1, \dots, \psi_\nu$ be orthonormal bases for
respectively $\calH|_x$ and $\calV|_x$. Parallel transport of
such bases remain
orthonormal bases from our assumption $\rnabla \tensorg =0$.

Define $\Oct(n) \to \Oct(\calH) \to M$ as the bundle of orthonormal
frames of $\calH$, and define $\Oct(\nu) \to \Oct(\calV) \to M$
similarly. Let
$$\Oct(n) \times \Oct(\nu) \to \Oct(\calH) \odot \Oct(\calV) \stackrel{\pi}{\to} M$$
denote the product bundle. We can then define an Ehresmann connection
$\calE^{\rnabla}$ on $\pi$ such that a curve $(\phi(t), \psi(t)) =
(\phi_1(t), \dots, \phi_n(t), \psi_1(t), \dots, \psi_\nu(t))$ in
$\Oct(\calH) \odot \Oct(\calV)$ is tangent to $\calE^{\rnabla}$ if and
only if each $\phi_j(t)$, $1 \leq j \leq n$, and $\psi_s(t)$, $1 \leq s
\leq \nu$, is parallel along $\gamma(t) = \pi(\phi(t), \psi(t)).$

Define vector fields $\widetilde A_1, \dots, \widetilde A_n$ on
$\Oct(\calH) \odot \Oct(\calV)$ by $\widetilde A_j |_{\phi, \psi} =
h_{\phi, \psi} \phi_j$ where the horizontal lift is with respect to
$\calE^{\rnabla}$. For any $x \in M$ and $(\phi, \psi) \in \Oct(\calH)
\odot \Oct(\calV)|_x$, consider the solution $\Phi_t$ of the
Stratonovich SDE up to explosion time $\tau$,
$$d\Phi_t = \sum_{j=1}^n \widetilde A_j|_{\Phi_t} \circ dW_t^j, \qquad \Phi_0 = (\phi, \psi),$$
where $W = (W^1, \dots, W^n)$ is a Brownian motion
in $\real^n$ with $W_0 = 0.$ It is then simple to verify that
$\Phi$ is a $\frac{1}{2} h^S \roughL$-diffusion  since $h^S
\roughL = \sum_{i=1}^n \widetilde A_j^2$ if we consider the 2-connection $h^S$
induced by the Ehresmann connection $\calE^{\rnabla}$. This shows that $X_t =
\pi(\Phi_t)$ is an $\frac{1}{2} \roughL$-diffusion on $M$ with
$X_0 = x$. Note that $\tau$ will be the explosion time of
$X_t$ as well by \cite{Shi82}.

\begin{remark} \label{re:znabla} If $\rnabla \tensorg \neq 0$, we can
  instead use the connection
  \begin{equation} \label{znabla} \znabla_A Z := \pr_{\calH} \nabla_A
    \pr_{\calH} Z + \pr_{\calV} \nabla_A \pr_{\calV} Z, \qquad A,Z \in
    \Gamma(TM).\end{equation} It clearly satisfies $\znabla \tensorg=
  0$, preserves the horizontal and vertical bundle under parallel
  transport and has $\rnabla_A Z = \znabla_A Z$ for any pair
  horizontal vector fields $A,Z \in \Gamma(\calH)$. The definition of
  $\srL$ in Definition~\ref{def:srL} hence remains the same if we
  replace~$\rnabla$ with~$\znabla$. The reason we will prefer to use
  $\rnabla$ is the property given in
  Lemma~\ref{lemma:basicsrnabla}~(a) which fails for~$\znabla$.
\end{remark}

\begin{remark}
  Instead of using the lift to $\Oct(\calH) \odot \Oct(\calV)$, we
  could have considered the full frame bundle $\Oct(TM)$ and
  development with respect to $\rnabla$ or $\znabla$, see
  e.g. \cite[Section 2.3]{Hsu02}. The diffusion of $\frac{1}{2}
  \roughL$ then has the Brownian motion in an $n$-dimensional subspace
  of $\real^{n+\nu}$ as its anti-development, where the subspace
  depends on the choice of initial frame.
\end{remark}

\begin{remark} If $\roughL$ is symmetric with respect to some volume form $\vol$, the
the following observation made in \cite[Theorem 4.4]{BaWa14} guarantees us that the diffusion $X_t$
has infinite lifetime, i.e. $\tau = \infty$ a.s. Let $\tensorg$ be any
Riemannian metric on $M$ taming~$\tensorh$ with corresponding volume form $\vol$. Assume that $\tensorg$ is
complete with (Riemannian) Ricci curvature bounded from below. Note
that if $\metricd_{\tensorg}$ is the metric of $\tensorg$ and
$\metricd_{cc}$ is the Carnot-Carath\'eodory of $\tensorh$, then
$\metricd_{cc}(x, y) \geq \metricd_{\tensorg}(x, y)$ for any $(x,
y) \in M \times M$. Hence, $B_r(x) \subseteq B_r^{\tensorg}(x)$
where $B_r(x)$ and $B_r^{\tensorg}(x)$ are the balls of respectively
$\metricd_{cc}$ and $\metricd_{\tensorg}$, centered at $x$ with radius
$r$. By the Riemannian volume comparison theorem, we have
$$\vol(B_r(x)) \leq \vol(B_r^{\tensorg}(x)) \leq C_1 e^{C_2r}$$
for some constants $C_1, C_2$. In conclusion, $\int_0^\infty \frac{r}{\log
  \vol(B_r(x))} dr = \infty$ and so \cite[Theorem 3]{Stu94} tells us that
 $\frac{1}{2} \roughL$-diffusions $X_t$ starting at a point $x \in M$ has
infinite lifetime.
\end{remark}

\section{Riemannian foliations and the curvature-dimension
  inequality} \label{sec:CurvatureDimension}
\subsection{Riemannian foliations and the geometry of
  $\rnabla$} \label{sec:GeometryRnabla} Let $(M, \calH, \tensorh)$ be
a sub-Riemannian manifold with a complement $\calV$. Let $\roughL$ be
the sub-Laplacian relative to $\calV$. In order to introduce a
curvature-dimension inequality for $\roughL$, we will need to choose a
Riemannian metric on $M$ which tame $\tensorh$ and makes $\calH$ and
$\calV$ orthogonal. Choose a metric tensor $\tensorv$ on $\calV$ to
obtain a Riemannian metric $\tensorg = \pr_{\calH}^* \tensorh +
\pr_{\calV}^* \tensorv$.

We make the following assumptions on $\calV$. We want to consider the
specific case when $\calV$ is integrable and satisfies
\begin{equation} \label{mpintegral} \calL_V (\pr_{\calH}^* \tensorh) =
  0 \quad \text{ for any } V \in \Gamma(\calV).\end{equation} Then
$\tensorg = \pr_{\calH}^* \tensorh + \pr_{\calV}^* \tensorv$ is
bundle-like for any choice of $\tensorv$, giving us a Riemannian
foliation as defined in Section~\ref{sec:LiftedLaplacian}. Since this property is
independent of $\tensorg|_{\calV}$, we introduce the
following definition.
\begin{definition}
  An integrable subbundle $\calV$ is called a metric-preserving
  complement to the sub-Riemannian manifold $(M, \calH, \tensorh)$ if
  $TM = \calH \oplus \calV$ and \eqref{mpintegral} hold.
\end{definition}
In the special case when the foliation $\calF$ of $\calV$ gives us a
submersion $\pi: M \to B= M/\calF$ with $\calH$ as an Ehresmann
connection on $\pi$, \emph{the curvature} of $\calH$ is a
vector-valued two-form $\calR \in \Gamma(\bigwedge^2 T^*M \otimes TM)$
defined by
\begin{equation} \label{EhresmannCurv} \calR(A,Z) := \pr_{\calV} [
  \pr_{\calH} A, \pr_{\calH} Z], \quad A,Z \in
  \Gamma(TM).\end{equation} This curvature measures how far $\calH$ is
from being a flat connection, i.e. an integrable subbundle. We will
call $\calR$ given by formula \eqref{EhresmannCurv} the curvature of
$\calH$ even when $\calV$ does not give us a submersion globally.

Define $\rnabla$ relative to $\tensorg$ as in \eqref{rnabla}. The
following properties are simple to verify.

\begin{lemma} \label{lemma:basicsrnabla} Let $\tensorg$ be a
  Riemannian metric and let $\calV$ be an integrable subbundle of $TM$
  with orthogonal complement $\calH$. Define $\rnabla$ relative to
  $\tensorg$ and the splitting $TM = \calH \oplus \calV$. Write
  $\tensorh$ and $\tensorv$ for the restriction of $\tensorg$ to
  respectively $\calH$ and $\calV$.
  \begin{enumerate}[\rm(a)]
  \item Let $Z\in \Gamma(TM)$ be an arbitrary vector field. If
    $A\in \Gamma(\calH)$ is horizontal, both $\rnabla_{A} Z$ and
    $\rnabla_Z A$ only depends on $\tensorh$ and the splitting $TM =
    \calH \oplus \calV$. They are independent of~$\tensorv$. Similarly,
    if $V$ is vertical, then $\rnabla_V Z$ and $\rnabla_Z V$ are
    independent of~$\tensorh$.
  \item The torsion of $\rnabla$ is given as $T^{\rnabla}(A,Z) =
    -\calR(A,Z).$
  \item $\calV$ is a metric-preserving complement of $(M, \calH,
    \tensorh)$ if and only if $$(\rnabla_A \tensorg)(Z,Z) =
    (\calL_{\pr_{\calH} A} \tensorg)(\pr_{\calV} Z, \pr_{\calV} Z).$$
    Equivalently, $\calV$ is metric-preserving if and only if $\rnabla
    \tensorh^* = 0$.
  \item If $\calV$ is metric-preserving, then
$$(\rnabla_{Z_1} \tensorg)(Z_2,Z_3) = - 2\tensorg(Z_1, \II(\pr_{\calV} Z_2, \pr_{\calV} Z_3)),$$
where $\II$ is the second fundamental form of the foliation of
$\calV$.
\end{enumerate}
\end{lemma}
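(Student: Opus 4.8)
The four parts share a single computational engine: expand the defining formula \eqref{rnabla} after splitting each field into horizontal and vertical parts, $A = \pr_{\calH} A + \pr_{\calV} A$, and then use the two defining properties of the Levi-Civita connection $\nabla$ (metric compatibility and vanishing torsion) together with the integrability of $\calV$. For (a), I would substitute $\pr_{\calH} A = A$ and $\pr_{\calV} A = 0$ (and the vertical analogue) directly. When $A$ is horizontal this leaves $\rnabla_A Z = \pr_{\calH} \nabla_A \pr_{\calH} Z + \pr_{\calV}[A, \pr_{\calV} Z]$; the second summand depends only on the splitting, and for the first I would feed horizontal test fields into the Koszul formula and observe that every surviving term is either a value of $\tensorg$ on horizontal vectors (hence of $\tensorh$) or a horizontal projection of a bracket. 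The same reading handles $\rnabla_Z A$, and the symmetric computation with $\calV$ in place of $\calH$ gives the statement for vertical $V$.

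For (b) I would compute $T^{\rnabla}(A,Z) = \rnabla_A Z - \rnabla_Z A - [A,Z]$ termwise after the same splitting. The two diagonal Levi-Civita summands combine, via torsion-freeness of $\nabla$, into $\pr_{\calH}[\pr_{\calH} A, \pr_{\calH} Z] + \pr_{\calV}[\pr_{\calV} A, \pr_{\calV} Z]$; the four bracket summands reassemble into the mixed brackets; and subtracting $[A,Z]$ leaves precisely $-\pr_{\calV}[\pr_{\calH} A, \pr_{\calH} Z]$, where integrability of $\calV$ is used to discard $\pr_{\calH}[\pr_{\calV} A, \pr_{\calV} Z]$. By \eqref{EhresmannCurv} this is $-\calR(A,Z)$.

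For (c) and (d) the pivotal preliminary is to read off from \eqref{rnabla} that $\rnabla_A Z$ is horizontal when $Z$ is horizontal and vertical when $Z$ is vertical, so that $\pr_{\calH}$ and $\pr_{\calV}$ are $\rnabla$-parallel. Since the splitting is $\tensorg$-orthogonal, $(\rnabla_A \tensorg)(Z,Z)$ then decouples as $(\rnabla_A \tensorg)(\pr_{\calH} Z, \pr_{\calH} Z) + (\rnabla_A \tensorg)(\pr_{\calV} Z, \pr_{\calV} Z)$ with no cross term. Expanding each block by metric compatibility of $\nabla$, I expect the vertical block to equal $(\calL_{\pr_{\calH} A} \tensorg)(\pr_{\calV} Z, \pr_{\calV} Z)$ unconditionally, while the horizontal block equals $(\calL_{\pr_{\calV} A}\, \pr_{\calH}^* \tensorh)(\pr_{\calH} Z, \pr_{\calH} Z)$. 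The displayed identity in (c) is therefore equivalent to the vanishing of the horizontal block for all $A,Z$, and since integrability forces the mixed and vertical-vertical components of $\calL_V \pr_{\calH}^* \tensorh$ to vanish automatically, this is exactly \eqref{mpintegral}. The clause $\rnabla \tensorh^* = 0$ follows from the same horizontal-block computation together with the standard fact that a connection preserving $\calH$ kills the bundle metric $\tensorh$ iff it kills the dual co-metric $\tensorh^*$. Finally, (d) is the polarization of (c) combined with the identity $(\calL_A \tensorg)(V,W) = -2\tensorg(A, \pr_{\calH} \nabla_V W)$ for horizontal $A$ and vertical $V,W$, valid because $\tensorg(A,V) = \tensorg(A,W) = 0$; here $\pr_{\calH} \nabla_V W = \II(V,W)$ is the second fundamental form of the leaves, symmetric by integrability of $\calV$.

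The main obstacle is not a single hard estimate but the bookkeeping in (c): separating which Lie-derivative components survive, verifying that the off-diagonal and vertical-vertical parts of $\calL_V \pr_{\calH}^* \tensorh$ vanish automatically so that \eqref{mpintegral} is genuinely equivalent to the horizontal-block condition, and carefully tracking the duality between $\tensorh$ on $\calH$ and the co-metric $\tensorh^*$ through the induced connection.
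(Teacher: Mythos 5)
Your proof is correct, and since the paper offers no argument for this lemma (it is dismissed with ``the following properties are simple to verify''), your direct expansion of \eqref{rnabla} — Koszul formula for (a), termwise torsion computation with integrability discarding $\pr_{\calH}[\pr_{\calV}A,\pr_{\calV}Z]$ for (b), block decoupling of $\rnabla\tensorg$ for (c), and polarization plus $(\calL_A \tensorg)(V,W) = -2\tensorg(A,\II(V,W))$ for (d) — is exactly the intended verification. The two delicate points are handled properly: the mixed and vertical-vertical components of $\calL_V(\pr_{\calH}^*\tensorh)$ do vanish automatically by integrability of $\calV$, so the horizontal-block condition is genuinely equivalent to \eqref{mpintegral}, and the duality between $\rnabla\tensorh$ on $\calH\times\calH$ and $\rnabla\tensorh^*$ works because $\rnabla$ preserves the splitting.
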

Recall that when $\calV$ is metric-preserving, $\tensorg$ is
bundle-like. We write down the basic properties of the
curvature~$R^{\rnabla}$ of~$\rnabla$ when $\calV$ is
metric-preserving.
\begin{lemma} \label{lemma:curvaturernabla} Let $Z_1,Z_2 \in \Gamma(TM)$
  be arbitrary vector fields and let $A \in \Gamma(\calH)$ be a
  horizontal vector field. Then
  \begin{enumerate}[\rm(a)]
  \item $\tensorg(R^{\rnabla}(Z_1,Z_2) A,A) = 0$,
  \item $ \tensorg(R^{\rnabla}(A, Z_1) Z_2, A) - \tensorg(R^{\rnabla}(A,
    Z_2) Z_1, A) = 0$. 

In particular, $\tensorg(R^{\rnabla}(A, \pr_{\calV} Z_2) Z_1, A) =0.$
\end{enumerate}
\end{lemma}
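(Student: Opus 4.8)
The plan is to exploit two structural features of $\rnabla$ in the metric-preserving case, both supplied by Lemma~\ref{lemma:basicsrnabla}. First, $\rnabla$ preserves the splitting $TM = \calH \oplus \calV$: from the definition \eqref{rnabla}, $\rnabla_Z$ sends $\Gamma(\calH)$ into $\Gamma(\calH)$ and $\Gamma(\calV)$ into $\Gamma(\calV)$ (this is essentially Lemma~\ref{lemma:basicsrnabla}(a)). Second, when $\calV$ is metric-preserving, Lemma~\ref{lemma:basicsrnabla}(c) gives $\rnabla \tensorh^* = 0$, which amounts to $(\rnabla_Z \tensorg)(A_1, A_2) = 0$ for all $Z \in \Gamma(TM)$ and all horizontal $A_1, A_2$. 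Note that $\rnabla$ is \emph{not} metric on all of $TM$, so the genuine content below is confined to the horizontal bundle; the failure of metric compatibility on $\calV$ turns out to be irrelevant because every curvature expression is ultimately paired against a horizontal vector.

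For part (a), since $\rnabla$ preserves $\calH$, the operators $\rnabla_Z|_{\Gamma(\calH)}$ define a genuine connection on the vector bundle $\calH \to M$, and its curvature is the restriction of $R^{\rnabla}(Z_1,Z_2)$ to $\calH$. Because this induced connection is compatible with $\tensorh$, its curvature endomorphism is skew-symmetric with respect to $\tensorh$, i.e.
\[
\tensorg(R^{\rnabla}(Z_1,Z_2)A_1, A_2) + \tensorg(A_1, R^{\rnabla}(Z_1,Z_2)A_2) = 0, \qquad A_1, A_2 \in \Gamma(\calH).
\]
Setting $A_1 = A_2 = A$ yields (a) immediately.

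For part (b), I would invoke the first Bianchi identity for the connection $\rnabla$, which carries torsion $T^{\rnabla} = -\calR$ by Lemma~\ref{lemma:basicsrnabla}(b):
\[
\sum_{\mathrm{cyc}} R^{\rnabla}(X,Y)W = \sum_{\mathrm{cyc}}\Big( (\rnabla_X T^{\rnabla})(Y,W) + T^{\rnabla}(T^{\rnabla}(X,Y),W)\Big),
\]
with the cyclic sum taken over $(X,Y,W) = (A, Z_1, Z_2)$. Pairing both sides with $A$ and using (a) to annihilate the term $\tensorg(R^{\rnabla}(Z_1,Z_2)A,A)$, the left-hand side collapses exactly to $\tensorg(R^{\rnabla}(A,Z_1)Z_2, A) - \tensorg(R^{\rnabla}(A,Z_2)Z_1, A)$, the quantity we must show vanishes. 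Everything then reduces to showing the right-hand side is $\tensorg$-orthogonal to $A$, and this is where the vertical-valuedness of the torsion does all the work: since $\calR$ takes values in $\calV$ and vanishes whenever one input is vertical, each quadratic term $T^{\rnabla}(T^{\rnabla}(\newbullet,\newbullet),\newbullet)$ has a vertical vector in its outer torsion slot and hence vanishes identically; and since $T^{\rnabla}$ is $\calV$-valued while $\rnabla$ preserves $\calV$, each derivative term $(\rnabla_{\newbullet}T^{\rnabla})(\newbullet,\newbullet)$ is again $\calV$-valued, hence orthogonal to the horizontal vector $A$. Thus the whole right-hand side pairs to zero with $A$, proving (b).

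The final assertion follows by applying (b) in the form $\tensorg(R^{\rnabla}(A,X)Y,A) = \tensorg(R^{\rnabla}(A,Y)X,A)$ with $X = \pr_{\calV} Z_2$ and $Y = Z_1$: this gives $\tensorg(R^{\rnabla}(A,\pr_{\calV}Z_2)Z_1,A) = \tensorg(R^{\rnabla}(A,Z_1)\pr_{\calV}Z_2,A)$, and since $\rnabla$ preserves $\calV$, the vector $R^{\rnabla}(A,Z_1)\pr_{\calV}Z_2$ is vertical and therefore orthogonal to $A$. I expect the only real obstacle to be bookkeeping: keeping the splitting-preservation and the $\calV$-valuedness of $T^{\rnabla}$ applied consistently, and correctly observing that the metric-compatibility hypothesis is needed \emph{only} on $\calH$. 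The computations themselves are routine once the first Bianchi identity is in hand.
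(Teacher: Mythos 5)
Your proof is correct and follows essentially the same route as the paper's: part (a) from metric compatibility of $\rnabla$ on horizontal vectors, and part (b) from the first Bianchi identity with torsion, using that $T^{\rnabla}=-\calR$ is $\calV$-valued, vanishes on vertical inputs, and that $\rnabla$ preserves the splitting. You merely spell out details the paper leaves implicit, namely why the torsion terms pair to zero with $A$ and how the ``in particular'' clause follows from (b) together with $\calV$-preservation.
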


\begin{proof} The statement in (a) holds since $(\rnabla
  \tensorg)(\pr_{\calH} \newbullet ,\pr_{\calH} \newbullet) = 0$. For
  the identity in~(b), we recall the first Bianchi identity for
  connections with torsion,
$$\circlearrowright R^{\rnabla}(A, Z_1) Z_2 
= - \circlearrowright T^{\rnabla}(A, T^{\rnabla}(Z_1,Z_2)) +
\circlearrowright (\rnabla_{A} T^{\rnabla})(Z_1, Z_2),$$ where
$\circlearrowright$ denotes the cyclic sum.
This means that
\begin{align*}
  \tensorg&(\circlearrowright R^{\rnabla}(A, Z_1) Z_2 , A) = \tensorg( R^{\rnabla}(A, Z_1) Z_2 , A) - \tensorg( R^{\rnabla}(A, Z_2) Z_1 , A) \\
  &= \tensorg\left(- \circlearrowright T^{\rnabla}(A,
    T^{\rnabla}(Z_1,Z_2)) + \circlearrowright (\rnabla_{A} T^{\rnabla})(Z_1,
    Z_2), A\right) = 0.
\end{align*}\qed
\end{proof}

We will use the fact that we have a clear idea of what Ricci
curvature is on a Riemannian manifold, to introduce a corresponding
tensor on a sub-Riemannian manifold with a metric-preserving
complement $\calV$.
\begin{proposition} \label{prop:RicH} Introduce a tensor
  $\RicH \in \Gamma(T^*M^{\otimes 2})$ by
$$\RicH(Z_1,Z_2) = \tr R^{\rnabla}(\pr_{\calH} \newbullet, Z_2) Z_1.$$
Then
\begin{enumerate}[\rm(a)]
\item $\RicH$ is symmetric and $\calV \subseteq \ker \RicH$.
\item $\RicH$ is independent of choice of metric $\tensorv$ on
  $\calV$.
\item Let $\calF$ be the foliation induced by $\calV$. Let $U$ be any
  neighborhood of $M$ such that the quotient map $\pi\colon U \to
  B:=U/(\calF|U)$ is a smooth submersions of manifolds. Let
  $\widecheck{\Ric}$ be the Ricci curvature on $B$ with respect to the
  induced Riemannian structure. Then ${\RicH}|_U = \pi^*
  \widecheck{\Ric}$.
\end{enumerate}
\end{proposition}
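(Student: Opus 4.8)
The plan is to prove the three statements in increasing order of difficulty, exploiting the fact that the connection $\rnabla$ restricts nicely to horizontal and vertical blocks when $\calV$ is metric-preserving. For part (a), symmetry of $\RicH$ should follow from Lemma~\ref{lemma:curvaturernabla}(b), which gives the interchange symmetry $\tensorg(R^{\rnabla}(A,Z_1)Z_2,A) = \tensorg(R^{\rnabla}(A,Z_2)Z_1,A)$ for horizontal $A$; tracing this over an orthonormal horizontal frame yields $\RicH(Z_1,Z_2) = \RicH(Z_2,Z_1)$. The claim $\calV \subseteq \ker \RicH$ is really about the \emph{right} slot: I would show $\RicH(Z_1,\pr_{\calV}Z_2)=0$ using the second part of Lemma~\ref{lemma:curvaturernabla}(b), namely $\tensorg(R^{\rnabla}(A,\pr_{\calV}Z_2)Z_1,A)=0$, and then combine with symmetry to kill the left slot as well. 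So part (a) is essentially a direct consequence of the curvature identities already proved.

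For part (b), the independence of $\tensorv$, the strategy is to invoke Lemma~\ref{lemma:basicsrnabla}(a): since $A$ ranges over $\calH$ in the trace and $\RicH(Z_1,Z_2)=\tr R^{\rnabla}(\pr_{\calH}\newbullet, Z_2)Z_1$ only ever feeds a horizontal vector into the first curvature slot, every term $\rnabla_A(\cdot)$ with $A$ horizontal is independent of $\tensorv$. The one subtlety is that the curvature $R^{\rnabla}(A,Z_2)Z_1 = \rnabla_A \rnabla_{Z_2} Z_1 - \rnabla_{Z_2}\rnabla_A Z_1 - \rnabla_{[A,Z_2]}Z_1$ contains an inner covariant derivative $\rnabla_{Z_2} Z_1$ with $Z_2$ possibly vertical, which \emph{does} depend on $\tensorh$ but not $\tensorv$; I would need to track each of the four terms and check via Lemma~\ref{lemma:basicsrnabla}(a) that the outer derivative along horizontal $A$ and the pairing with $A$ remove all dependence on $\tensorv$. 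This is a short but careful bookkeeping argument.

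For part (c), the identification with the pullback of the base Ricci curvature, the natural approach is to reduce to the Riemannian-submersion case locally and transport the computation of Proposition~\ref{prop:liftingL}. On a trivializing neighborhood $U$ the foliation is a genuine Riemannian submersion $\pi\colon U \to B$ with $\calH$ an Ehresmann connection, and the key input is the compatibility $\pr_{\calH}[h\check A_i, h\check A_j] = h[\check A_i,\check A_j]$ together with $\tensorg(h\check A_i, h\check A_j)=\widecheck{\tensorg}(\check A_i,\check A_j)$ already used in that proof. The plan is to show that the horizontal-horizontal block of $\rnabla$ is $\pi$-related to $\widecheck\nabla$, so that for horizontally-lifted fields $R^{\rnabla}(h\check A, h\check Z)h\check W$ projects to $h(\widecheck R(\check A,\check Z)\check W)$ modulo vertical terms that either vanish or drop out of the horizontal trace. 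Tracing over a horizontal orthonormal frame $h\check A_1,\dots,h\check A_n$ then reproduces $\widecheck\Ric$ pulled back. The main obstacle here is controlling the \emph{cross terms} in $R^{\rnabla}$: because $\calH$ is generally non-integrable, $\rnabla$ has nontrivial torsion $-\calR$ by Lemma~\ref{lemma:basicsrnabla}(b), and the curvature of a lifted connection differs from the lift of the base curvature by O'Neill-type terms built from $\calR$. I expect the crux of the argument to be verifying that these O'Neill correction terms, which involve the vertical bracket $\calR$, contribute zero to the specific trace defining $\RicH$ — this is exactly where the horizontal trace (as opposed to a full trace) and the vanishing identities of Lemma~\ref{lemma:curvaturernabla} must conspire so that only the genuine base Ricci curvature survives.
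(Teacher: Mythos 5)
Your plan follows the paper's own proof closely in structure. For (a) your argument is exactly the paper's: symmetry comes from Lemma~\ref{lemma:curvaturernabla}~(b), and the vanishing on $\calV$ from the ``in particular'' part of that lemma (the paper kills the left slot by noting $\rnabla$ preserves the splitting, you use symmetry instead --- both work). For (b) the paper is terser (``verify from the definition of the Levi-Civita connection''), but your route --- reduce to horizontal $Z_1,Z_2$ via (a) and then run Lemma~\ref{lemma:basicsrnabla}~(a) through each of the terms $\rnabla_{A}\rnabla_{Z_2}Z_1$, $\rnabla_{Z_2}\rnabla_{A}Z_1$, $\rnabla_{\pr_{\calH}[A,Z_2]}Z_1$, $\rnabla_{\pr_{\calV}[A,Z_2]}Z_1$ --- is sound and self-contained. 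The one place your expectation diverges from what actually happens is the crux you flag in (c): there are no O'Neill-type corrections that must cancel in the trace. Writing $[h\check A_i, h\check A_j] = h[\check A_i,\check A_j] + \calR(h\check A_i, h\check A_j)$, the only correction term in $R^{\rnabla}(h\check A_i,h\check A_j)h\check A_k$ is the connection term $\rnabla_{\calR(h\check A_i,h\check A_j)}\, h\check A_k$, and this vanishes \emph{identically}, not merely after tracing: $\calR$ takes vertical values, and for a basic (lifted) field one has $\rnabla_{V}\, h\check A = \pr_{\calH}[V, h\check A] = 0$, since $h\check A$ is $\pi$-related to $\check A$ and $V$ to the zero field. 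In particular Lemma~\ref{lemma:curvaturernabla} plays no role here, and it would be the wrong tool in any case, because the term to be killed is a connection term rather than a curvature term. Combining this with $\rnabla_{h\check A_i} h\check A_j = h\widecheck{\nabla}_{\check A_i}\check A_j$ (from the proof of Proposition~\ref{prop:liftingL}) gives $R^{\rnabla}(h\check A_i,h\check A_j)h\check A_k = h\bigl(\widecheck{R}(\check A_i,\check A_j)\check A_k\bigr)$ on the nose, and tracing over the horizontal frame yields (c). So your plan is correct and matches the paper's approach; carrying it out, you would find the crux resolves more simply than you anticipate --- the adapted connection $\rnabla$ is precisely designed so that the O'Neill corrections of the Levi-Civita theory never appear.
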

\begin{proof}
 Since $\rnabla$ preserves both the vertical and horizontal bundle, and by means of
  Lemma~\ref{lemma:curvaturernabla}~(b), we know that $\RicH = \pr_{\calH}^* \RicH$. It is symmetric by Lemma~\ref{lemma:curvaturernabla}~(b), which completes the proof of the statement in (a). The statement (b) can be verified using the definition of the Levi-Civita connection.

  To prove (c), let $\check{A}_1, \dots, \check{A}_n$ be any local
  orthonormal basis on $B$. Note that $$[h\check{A}_i, h\check{A}_j] =
  h[\check{A}_i, \check{A}_j] + \calR(h\check{A}_i,
  h\check{A}_j).$$ Also, for any $V \in \Gamma(\calV)$, $\rnabla_{V}
  h\check{A}_i = \pr_{\calH} [V,h\check{A}_i] =0$ since $h\check{A}_i$
  and $V$ are $\pi$-related to respectively $\check{A}_i$ and the
  zero-section of $TB$. Finally, recall that $\rnabla_{hA_i} hA_j = h \widecheck{\nabla}_{A_i} A_j$ from the proof of Proposition~\ref{prop:liftingL}. For any $j,k$,
  \begin{align*}
    \sum_{i=1}^n &\tensorg(R^{\rnabla}(h\check{A_i}, h\check{A}_j)h\check{A}_k, h\check{A}_i) \\
    &=  \sum_{i=1}^n \tensorg\left(\left[ \rnabla_{h\check{A}_i} , \rnabla_{h\check{A}_j} \right] h \check{A}_k - \rnabla_{h[\check{A}_i, \check{A}_j]} h \check{A}_k - \rnabla_{\calR(h\check{A}_i, h\check{A}_j)} h\check{A}_k, h\check{A}_i \right) \\
    &= \sum_{i=1}^n \widecheck{\tensorg}\left(\left[
        \widecheck{\nabla}_{\check{A}_i} ,
        \widecheck{\nabla}_{\check{A}_j} \right] \check{A}_k -
      \widecheck{\nabla}_{[\check{A}_i, \check{A}_j]} \check{A}_k ,
      \check{A}_i \right) = \widecheck{\Ric}(\check{A}_k,
    \check{A}_j).
  \end{align*}
  It follows that $\RicH(h\check{A}_k, h\check{A}_j) =
  \widecheck{\Ric}(\pi_* h\check{A}_k, \pi_* h\check{A}_j)$, and hence
  the same holds for any pair of vector fields $Z_1,Z_2$.\qed
\end{proof}

\subsection{A generalized curvature-dimension
  inequality} \label{sec:GenCD} For any symmetric bilinear tensor
$\tensors^* \in \Gamma(\Sym^2 TM)$, we associate a symmetric map
$\sfGamma^{\tensors^*}$ of smooth functions by
$$\begin{array}{rccc} \sfGamma^{\tensors^*}\colon&   C^\infty(M) \times C^\infty(M) & \to & C^\infty(M) \\
  & (f,g) & \mapsto & \tensors^*(df, dg).
\end{array} $$
By Leibniz identity, we have relation $\sfGamma^{\tensors^*}(f, g \phi) = g\sfGamma^{\tensors^*}(f,\phi) + \phi\sfGamma^{\tensors^*}(f,g)$ for arbitrary smooth functions $f,g,\phi$. 
Relative to some second order operator $L \in \Gamma(T^2M)$, we define
$$\sfGamma_{2}^{\tensors^*}(f,g) := \frac{1}{2} \left(L \sfGamma^{\tensors^*}(f,g) - \sfGamma^{\tensors^*}(L f, g) - \sfGamma^{\tensors^*}(f, L g) \right).$$
To simplify notation, we will write $\sfGamma^{\tensors^*}(f,f) =
\sfGamma^{\tensors^*}(f)$ and $\sfGamma^{\tensors^*}_2(f,f) =
\sfGamma^{\tensors^*}_2(f)$.

Let $\tensorh^* = \tensorq_L$ where $\tensorq$ is defined as in
\eqref{qform}. Assume that $\tensorh^*$ is positive semi-definite and
let $\tensorv^*\in \Gamma(\Sym^2 TM)$ be another chosen positive
semi-definite section. Then $L$ is said to satisfy {\it a generalized
  curvature-dimension inequality} with parameters $n,\rho_{1},
\rho_{2,0}$ and $\rho_{2,1}$ if
\begin{align} \tag{{\sf CD*}} \label{CDstar}
  \sfGamma^{\tensorh^*}_2(f) + \ell \sfGamma_2^{\tensorv^*}(f) \geq &
  \frac{1}{n} (Lf)^2 + \left( \rho_{1} - \frac{1}{\ell}\right)
  \sfGamma^{\tensorh^*}(f) + (\rho_{2,0} + \rho_{2,1} \ell )
  \sfGamma^{\tensorv^*}(f),\end{align} for any $\ell >0$. We
include the possibility of $n = \infty$. Any such inequality implies
$\sfGamma_2^{\tensorv^*}(f) \geq \rho_{2,1} \sfGamma^{\tensorv^*}(f)$
by dividing both sides with $\ell$ and letting it go to infinity.

Let $(M, \calH, \tensorh)$ be a sub-Riemannian manifolds with an
integrable complement $\calV$ that is also metric-preserving. Choose a
metric $\tensorv$ on $\calV$, and define $\rnabla$ with respect to the
corresponding Riemannian metric $\tensorg$. Let $\tensorv^*$ be the
co-metric corresponding to $\tensorv$. Using the properties of
$\rnabla$, we are ready to present our generalized curvature-dimension
inequality. We will make the following assumptions on $(M, \calH,
\tensorh)$.
\begin{enumerate}[(i)]
\item Let $\calR$ be the curvature of $\calH$ relative to the
  complement $\calV$. Assume that the length of $\calR$ is bounded on
  $M$ and define $\McalR < \infty$ as the minimal number such that
$$ \|\calR(v,\newbullet)\|_{\tensorg^* \otimes \tensorg} \leq \McalR \|\pr_{\calH} v\|_{\tensorg}, \quad \text{for any }
v \in TM.$$
By replacing $\tensorv$ with $\McalR^{-2} \tensorv$, we may assume that $\McalR =1$. 
From now on, we will work with the vertical metric $\tensorv$ normalized in this way.
\item Let $\RicH$ be defined as in Proposition~\ref{prop:RicH}. Assume
  that $\RicH$ has a lower bound $\rRicH$, i.e. for every $v \in TM$,
  we have $$\RicH(v,v) \geq \rRicH \|\pr_{\calH} v\|_{\tensorh}^2.$$
\item Assume that the length of the tensor $\rnabla
  \tensorg^*$ ($= \rnabla \tensorv^* $) is bounded.  Write
$$\MII = \sup_{M} \big\| \rnabla_{\!\newbulletsub}^{\mathstrut} \tensorv^* (\newbullet,\newbullet) \big\|_{\tensorg^* \otimes \Sym^2 \tensorg^*}.$$
Define also
$$(\roughL \tensorv^*)(p,p) = \tr_{\calH} (\rnabla_{\times ,\times}^2 \tensorv^*)(p,p)$$
and assume that for any $p \in T^*M$, we have $(\roughL
\tensorv^*)(p,p) \geq \rLv \|p\|_{\tensorv^*}^2$
globally on $M$ for some constant $\rLv$.
\item Finally, introduce $\RicHV$ as
$$\RicHV (Z_1,Z_2) = \frac{1}{2} \tr\left( \tensorg(Z_1, (\rnabla_{\times}^{\mathstrut} \calR)(\times,Z_2)) + \tensorg(Z_2, (\rnabla_{\times}^{\mathstrut} \calR)(\times,Z_1)) \right),$$
Assume that for any $Z \in \Gamma(TM)$,
$$\RicHV(Z,Z) \geq - 2\MRicHV \|\pr_{\calV} Z\|_{\tensorv} \| \pr_{\calH} Z\|_{\tensorh}$$
holds pointwise on $M$ for some number $\MRicHV$.
\end{enumerate}
Note that $\McalR, \MII$ and $ \MRicHV$ are always non-negative, while
this is not necessarily true for $\rRicH$ and $\rLv$. We will define
one more constant, which will always exist. For any $\alpha \in
\Gamma(T^*M)$, define $\mcalR$ as the maximal number satisfying
$$\|\alpha(\calR(\newbullet, \newbullet))\|_{\wedge^2 \tensorh^*} \geq \mcalR \|\alpha \|_{\tensorv^*}.$$
If $\rank \calV = \nu$, then 
$$\nu \mcalR^2 \leq \|\calR\|^2_{\wedge^2 \tensorg^* \otimes \tensorg} \leq \frac{n}{2} \McalR^2  = \frac{n}{2},$$ 
so the maximal value of $\mcalR$ is $(\frac{n}{2\nu})^{1/2}$ when the vertical metric has been normalized. Moreover, it can only be nonzero if $\calH$ is step $2$ equiregular as defined in
Remark~\ref{re:regularH}.

With these assumptions in place, we have the following version of a
generalized curvature-dimension inequality.
\begin{theorem} \label{th:CD} Define $\sfGamma^{\tensors^*}_2$ with
  respect to $L = \roughL$. Then $L$ satisfies \eqref{CDstar} with
  \begin{equation} \label{rhoSR} \left\{ \begin{aligned}
  	n & = \rank \calH \\
  	\rho_1  & =  \rRicH - c^{-1}, \\
        \rho_{2,0} & =  \frac{1}{2} m_{\calR}^2 - c(\MRicHV + \MII)^2, \\
        \rho_{2,1} & = \frac{1}{2} \rLv - \MII^2, \end{aligned}
    \right.\end{equation} for any positive $c > 0$.
\end{theorem}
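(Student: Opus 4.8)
The plan is to establish two Bochner--Weitzenb\"ock identities, one for $\sfGamma^{\tensorh^*}_2(f)$ and one for $\sfGamma^{\tensorv^*}_2(f)$, and then to combine them using the pointwise bounds (i)--(iv) together with two applications of Young's inequality whose parameters are exactly $\ell$ and $c$. Everything is pointwise, so I would fix $x\in M$, work with the normalized $\tensorv$ from assumption (i) (so $\McalR=1$), and choose local orthonormal frames $A_1,\dots,A_n$ of $\calH$ and $V_1,\dots,V_\nu$ of $\calV$ that are $\rnabla$-adapted at $x$ (the symmetric part of $\rnabla_{A_i}A_j$ vanishing at $x$). The two structural facts I would lean on are $\rnabla\tensorh^*=0$ (Lemma~\ref{lemma:basicsrnabla}(c), as $\calV$ is metric-preserving) and $T^{\rnabla}=-\calR$ (Lemma~\ref{lemma:basicsrnabla}(b)). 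Writing $\roughL f=\tr_{\calH}\rnabla^2 f=\sum_{i=1}^n\rnabla^2_{A_i,A_i}f$, the whole argument hinges on splitting the squared horizontal Hessian $\sum_{i,j=1}^n(\rnabla^2_{A_i,A_j}f)^2$ into its symmetric and antisymmetric parts.

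For the horizontal identity, using $\rnabla\tensorh^*=0$ I would compute
\[
\sfGamma^{\tensorh^*}_2(f)=\sum_{i,j=1}^n\big(\rnabla^2_{A_i,A_j}f\big)^2+\tensorh^*\big(df,\ \tr_{\calH}\rnabla^2\,df-d\roughL f\big),
\]
and then identify the commutator $\tr_{\calH}\rnabla^2\,df-d\roughL f$ via the first Bianchi identity of Lemma~\ref{lemma:curvaturernabla}: its contraction against $df$ splits into the Ricci term $\RicH(\shh df,\shh df)$, a derivative-of-torsion piece assembling into $\RicHV$, and a genuinely second-order remainder coupling the mixed Hessian block $\rnabla^2_{A_i,V_s}f$ to $\shh df$ through $\calR$. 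The first payoff is Cauchy--Schwarz: $\sum_{i,j}(\rnabla^2_{A_i,A_j}f)^2\ge\frac1n(\roughL f)^2+\tfrac12\sum_{i<j}\big(df(\calR(A_i,A_j))\big)^2$, where the symmetric part gives $\frac1n(Lf)^2$ with $n=\rank\calH$, and the antisymmetric part — equal to $\tfrac12\,df(\calR(A_i,A_j))$ because $T^{\rnabla}=-\calR$ — is bounded below by $\tfrac12\mcalR^2\,\sfGamma^{\tensorv^*}(f)$ by the defining property of $\mcalR$. Combined with assumption (ii), this accounts for the $\frac1n(Lf)^2$, $\rRicH\,\sfGamma^{\tensorh^*}(f)$ and $\tfrac12\mcalR^2\,\sfGamma^{\tensorv^*}(f)$ contributions.

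For the vertical identity I would expand $\sfGamma^{\tensorv^*}_2(f)$ keeping $\rnabla\tensorv^*\neq0$, obtaining
\[
\sfGamma^{\tensorv^*}_2(f)=\tfrac12(\roughL\tensorv^*)(df,df)+\sum_{i=1}^n\sum_{s=1}^\nu\big(\rnabla^2_{A_i,V_s}f\big)^2+2\sum_{i=1}^n(\rnabla_{A_i}\tensorv^*)(\rnabla_{A_i}df,df)+(\text{curvature coupling}),
\]
where the first term is $\ge\tfrac12\rLv\,\sfGamma^{\tensorv^*}(f)$ by assumption (iii) and the second is the nonnegative mixed Hessian block. The crux is then to form $\sfGamma^{\tensorh^*}_2(f)+\ell\,\sfGamma^{\tensorv^*}_2(f)$ and complete the square in $(\rnabla^2_{A_i,V_s}f)$: the term $\ell\sum_{i,s}(\rnabla^2_{A_i,V_s}f)^2$ dominates both the second-order $\calR$-remainder from the horizontal identity (whose coefficient is $\le\McalR=1$ after normalization) and the coupling $2\ell\sum_i(\rnabla_{A_i}\tensorv^*)(\rnabla_{A_i}df,df)$, bounded by $2\ell\,\MII$ times the block. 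Completing the square yields precisely $-\ell^{-1}\sfGamma^{\tensorh^*}(f)-\ell\,\MII^2\,\sfGamma^{\tensorv^*}(f)$ plus a leftover first-order cross term $-2\MII\,\|\shh df\|_{\tensorh}\|\shv df\|_{\tensorv}$; here the normalization $\McalR=1$ is what fixes the coefficient of $\sfGamma^{\tensorh^*}(f)$ at exactly $\ell^{-1}$. Finally I would gather the remaining first-order couplings — the $\RicHV$ term, bounded below by $-2\MRicHV\|\shh df\|_{\tensorh}\|\shv df\|_{\tensorv}$ by assumption (iv), together with the leftover $\MII$ term — into $-2(\MRicHV+\MII)\|\shh df\|_{\tensorh}\|\shv df\|_{\tensorv}$, and apply Young's inequality with parameter $c$ to get $-c^{-1}\sfGamma^{\tensorh^*}(f)-c(\MRicHV+\MII)^2\sfGamma^{\tensorv^*}(f)$. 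Collecting coefficients of $(Lf)^2$, $\sfGamma^{\tensorh^*}(f)$ and $\sfGamma^{\tensorv^*}(f)$ reads off $\rho_1=\rRicH-c^{-1}$, $\rho_{2,0}=\tfrac12\mcalR^2-c(\MRicHV+\MII)^2$ and $\rho_{2,1}=\tfrac12\rLv-\MII^2$.

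The main obstacle I anticipate is the derivation of the two Weitzenb\"ock identities for $\rnabla$, which is neither torsion-free nor preserves $\tensorv^*$: every commutator of $\rnabla^2$ with $d$ produces torsion terms $T^{\rnabla}=-\calR$ and derivative-of-torsion terms, and the bookkeeping that sorts these into the good antisymmetric-Hessian term, the tensors $\RicH$ and $\RicHV$, and the bad second-order $\calR$-coupling requires careful use of Lemmas~\ref{lemma:basicsrnabla} and~\ref{lemma:curvaturernabla}. The conceptual heart is the completion of the square in the mixed Hessian block: it is the reason the parameter $\ell$ in \eqref{CDstar} must be attached to $\sfGamma^{\tensorv^*}_2(f)$, and it is what converts the indefinite second-order $\calR$-coupling — harmless in the Riemannian case $\calR=0$ — into the controlled loss $-\ell^{-1}\sfGamma^{\tensorh^*}(f)$.
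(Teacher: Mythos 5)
Your proposal is correct and follows essentially the same route as the paper's proof in Section~\ref{sec:proofThCD}: a pointwise Bochner-type expansion in the adapted frames of Corollary~\ref{cor:normal}, the symmetric/antisymmetric splitting of the horizontal Hessian (Cauchy--Schwarz for $\frac1n(Lf)^2$, torsion $T^{\rnabla}=-\calR$ and $\mcalR$ for $\frac12\mcalR^2\sfGamma^{\tensorv^*}(f)$), identification of the $\RicH$ and $\RicHV$ terms, completion of the square in the vertical part of the Hessian block against the $\calR$- and $\rnabla\tensorv^*$-couplings (producing exactly $-\McalR^2\ell^{-1}\sfGamma^{\tensorh^*}(f)-\ell\MII^2\sfGamma^{\tensorv^*}(f)$ plus the $-2\McalR\MII$ cross term), and a final Young inequality with parameter $c$ on the combined $-2(\MRicHV+\McalR\MII)$ cross term. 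The only difference is organizational: you compute $\sfGamma^{\tensorh^*}_2$ and $\sfGamma^{\tensorv^*}_2$ separately and add, whereas the paper expands $\sfGamma^{\tensorh^*+\ell\tensorv^*}_2$ in one pass, which by linearity of $\sfGamma_2^{\tensors^*}$ in $\tensors^*$ is the same computation.
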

Note that we include the possibility $c = \infty$ when $\MRicHV = \MII
= 0$. The proof is found in Section \ref{sec:proofThCD}.

We can give the following interpretation of the different terms in the
inequality.
\begin{enumerate}[\rm (i)]
\item $\McalR$ and $\mcalR$ measures how well $\tensorv$ can be
  controlled by the curvature $\calR$ of $\calH$.  To be more precise,
  for any $x \in M$, $p \in T_x^*M$, define $C_1(x)$ and $C_2(x)$ such
  that
$$C_1(x) \|p \|_{\tensorv^*} \leq \| p \circ \calR\|_{\wedge^2 \tensorh^*} \leq C_2(x) \|p \|_{\tensorv^*}$$
with $C_1(x)$ maximal and $C_2(x)$ minimal at every point. 
Then $$\frac{n}{2} \McalR \geq \sup_M \, C_2(x),$$ while $\mcalR = \inf_M C_1(x).$
\item$\Ric_{\calH}$ is a generalization of ``the Ricci
  curvature downstairs'' on sub-Riemann\-ian structures on submersions
  by Proposition~\ref{prop:RicH}~(c).
\item Both $\MII$ and $\rLv$ measure how $\tensorv$ changes
  in horizontal directions. In particular, $\rnabla \tensorv^*$ is the
  second fundamental form by Lemma~\ref{lemma:basicsrnabla}.
\item $\RicHV$ measures how ``optimal'' our subbundle $\calH$ is
  with respect to our chosen complement $\calV$ in the sense that on
  invariant sub-Riemannian structures on principal bundles, $\RicHV$
  measures how far $\calH$ is from being a Yang-Mills connection, see
  Example~\ref{ex:principal}. We will see how $\RicHV$ can be
  interpreted in a similar way in the general case in Appendix~A.4,
  Part~II.
\end{enumerate}
For further geometric interpretation, see Part~II, Section~5.2.
\begin{remark}
  In the proof of Theorem~\ref{th:CD}, we prove a curvature-dimension
  inequality without normalizing $\McalR$ in \eqref{CDnonNorm}. The
  reason why we are free to normalize $\tensorv$ such that $\McalR =1$ is the
  following. Since $\rnabla_A Z$ is independent of $\tensorv$ when
  either $A$ or $Z$ are horizontal, the bounds introduced in (i)--(v)
  behave well under scaling in the sense that for any $\ve > 0$, if we
  define the bounds relative to $\tensorv_2 = \frac{1}{\ve} \tensorv$
  rather than $\tensorv$, we will get the same inequality back for
  $\sfGamma^{\tensorh^*}_2(f) + \frac{\ell}{\ve}
  \sfGamma^{\tensorv_2^*}_2(f) = \sfGamma^{\tensorh^*}_2(f) + \ell
  \sfGamma^{\tensorv^*}_2(f)$.
\end{remark}

\begin{remark} \label{re:Baudoin}
In parallel with the development of our paper, Theorem~\ref{th:CD} for the case $\rnabla \tensorv^* =0$, 
$\MRicHV =0$ appeared in \cite{BKW14}.
\end{remark}

\subsection{Totally geodesic  foliations} \label{sec:NablaMetric0} 
Let $(M, \calH, \tensorh)$ be a
sub-Riemannian manifold with an integrable, metric-preserving
complement $\calV$. Let $\tensorv$ be a chosen metric on $\calV$ and
assume that $\rnabla \tensorv^* = 0$. By Section~\ref{sec:CompVSvol}, if $\vol$ is the
volume form of the Riemannian metric $\tensorg$ corresponding
to~$\tensorv$, then~$\roughL$ coincides with the sub-Laplacian~$\srL$
defined relative to $\vol$. By Theorem~\ref{th:CD} we also obtain a
somewhat simpler curvature-dimension inequality
\begin{equation} \label{CD} \tag{\sf CD} \sfGamma^{\tensorh^* + \ell
    \tensorv^*}_2(f) \geq \frac{1}{n} (Lf)^2 + \left(\rho_1 -
    \frac{1}{\ell} \right) \sfGamma^{\tensorh^*}(f) + \rho_2
  \sfGamma^{\tensorv^*}(f),\end{equation}
\begin{align} \label{CDII0} n = \rank \calH, \quad \rho_1 = \rRicH - c^{-1}, \quad \rho_2=
  \tfrac{1}{2} \mcalR^2 - c \MRicHV^2,\end{align}
 where $c > 0$ is arbitrary. The inequality \eqref{CD} with the additional assumption $\rho_2 >0$ was originally suggested as a generalization of the curvature-dimension inequality by Baudoin and Garofalo~\cite{BaGa12}.
  
We will also need the following relation, which is closely related to the inequality \eqref{CD}. The proof is left to
Section~\ref{sec:ProofDoubleGamma}. This result is essential for
proving the result of Theorem~\ref{th:Cond}~(b).
\begin{proposition} \label{prop:DoubleGamma} For any $f \in
  C^\infty(M)$, and any $c >0$ and $\ell >0$, we have
  \begin{align*}
    \frac{1}{4} \sfGamma^{\tensorh^*}(\sfGamma^{\tensorh^*}(f)) &\leq  \sfGamma^{\tensorh^*}(f) \left( \sfGamma^{\tensorh^* + \ell \tensorv^*}_2(f) - (\varrho_1 -\ell^{-1}) \sfGamma^{\tensorh^*}(f) - \varrho_2 \sfGamma^{\tensorv^*}(f) \right), \\
    \frac{1}{4} \sfGamma^{\tensorh^*}(\sfGamma^{\tensorv^*}(f)) &\leq
    \sfGamma^{\tensorv^*}(f) \sfGamma^{\tensorv^*}_2(f),
  \end{align*}
  where $\varrho_1 = \rRicH - c^{-1}$ and $\varrho_2 = - c \MRicHV^2.$
\end{proposition}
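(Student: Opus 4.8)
The plan is to prove both inequalities pointwise, reducing each to a Cauchy--Schwarz estimate for the Hessian $\rnabla^2 f$ combined with the Bochner-type identities that already underlie Theorem~\ref{th:CD}. I fix a point $x \in M$ and choose orthonormal frames $A_1, \dots, A_n$ of $\calH$ and $V_1, \dots, V_\nu$ of $\calV$ that are $\rnabla$-parallel at $x$; this is possible because in the present totally geodesic setting the hypotheses $\rnabla \tensorv^* = 0$ together with metric-preservation force $\rnabla \tensorg = 0$, so $\rnabla$ preserves both the splitting and the metric under parallel transport. At $x$ one then has $\sfGamma^{\tensorh^*}(f) = \sum_i (A_i f)^2$, $\sfGamma^{\tensorv^*}(f) = \sum_s (V_s f)^2$, $\roughL f = \sum_i A_i A_i f$, and every covariant derivative of a frame field vanishes, so that $A_i A_j f = \rnabla^2_{A_i, A_j} f$ and $A_i V_s f = \rnabla^2_{A_i, V_s} f$. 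Differentiating along $A_i$ gives $A_i \sfGamma^{\tensorh^*}(f) = 2 \sum_j (A_j f)\, \rnabla^2_{A_i, A_j} f$ and $A_i \sfGamma^{\tensorv^*}(f) = 2 \sum_s (V_s f)\, \rnabla^2_{A_i, V_s} f$, whence by Cauchy--Schwarz $\tfrac14 \sfGamma^{\tensorh^*}\!\big(\sfGamma^{\tensorh^*}(f)\big) = \sum_i \big( \sum_j (A_j f)\, \rnabla^2_{A_i, A_j} f \big)^2 \leq \big( \sum_{i,j} (\rnabla^2_{A_i, A_j} f)^2 \big)\, \sfGamma^{\tensorh^*}(f)$ and likewise $\tfrac14 \sfGamma^{\tensorh^*}\!\big(\sfGamma^{\tensorv^*}(f)\big) \leq \big( \sum_{i,s} (\rnabla^2_{A_i, V_s} f)^2 \big)\, \sfGamma^{\tensorv^*}(f)$. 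It thus remains to bound the squared horizontal Hessian $\sum_{i,j}(\rnabla^2_{A_i,A_j}f)^2$ and the squared mixed Hessian $\sum_{i,s}(\rnabla^2_{A_i,V_s}f)^2$ by the two $\sfGamma_2$-expressions.

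For the second inequality I would establish the exact identity $\sfGamma^{\tensorv^*}_2(f) = \sum_{i,s}(\rnabla^2_{A_i,V_s}f)^2$. Writing $W = \shv df$ for the vertical gradient, the Weitzenb\"ock formula for the metric connection $\rnabla$ gives $\tfrac12 \roughL\, \sfGamma^{\tensorv^*}(f) = \sum_{i,s}(\rnabla^2_{A_i,V_s}f)^2 + \tensorg\big(\tr_{\calH}\rnabla^2 W, W\big)$ at $x$, where $\tr_{\calH}\rnabla^2$ denotes the horizontal connection Laplacian on sections of $\calV$, while $\sfGamma^{\tensorv^*}(\roughL f, f) = \tensorg(\shv d\roughL f, W)$. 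Hence $\sfGamma^{\tensorv^*}_2(f) = \sum_{i,s}(\rnabla^2_{A_i,V_s}f)^2 + \tensorg(\tr_{\calH}\rnabla^2 W - \shv d\roughL f, W)$, and the claim reduces to the vanishing of the commutator term. Expanding its $V_t$-component as $\sum_i\big(\rnabla^3 f(A_i;A_i,V_t) - \rnabla^3 f(V_t; A_i, A_i)\big)$ and using that the mixed Hessian is symmetric (since $\calR(A_i,V_t)=0$ identically), the Ricci identity for the one-form $df$ turns this into $-\sum_i df\big(R^{\rnabla}(A_i,V_t)A_i\big) - \sum_i \rnabla^2_{T^{\rnabla}(A_i,V_t),A_i} f$. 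The torsion term drops out because $T^{\rnabla}(A_i,V_t) = -\calR(A_i,V_t) = 0$, and the curvature term vanishes because $R^{\rnabla}(A_i,V_t)A_i$ is horizontal while $\sum_i \tensorg(R^{\rnabla}(A_i,V_t)A_i, A_j) = -\RicH(A_j,V_t) = 0$, using skew-symmetry of $R^{\rnabla}$ (metric compatibility) and $\calV \subseteq \ker \RicH$ from Proposition~\ref{prop:RicH}~(a). Combined with the Cauchy--Schwarz bound above, this gives the second inequality.

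For the first inequality I would reuse the pointwise Bochner estimate proved for Theorem~\ref{th:CD}. That proof yields $\sfGamma^{\tensorh^*}_2(f) + \ell\, \sfGamma^{\tensorv^*}_2(f) \geq \sum_{i,j}(\rnabla^2_{A_i,A_j}f)^2 + (\rho_1 - \ell^{-1}) \sfGamma^{\tensorh^*}(f) + \big(\tfrac12 \mcalR^2 + \varrho_2\big) \sfGamma^{\tensorv^*}(f)$, with $\rho_1 = \rRicH - c^{-1}$ and $\varrho_2 = -c\MRicHV^2$; the final bound of the theorem is obtained only afterwards by invoking $\sum_{i,j}(\rnabla^2_{A_i,A_j}f)^2 \geq \tfrac1n(Lf)^2$. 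Since $\tfrac12\mcalR^2\, \sfGamma^{\tensorv^*}(f) \geq 0$, this may be weakened to $\sum_{i,j}(\rnabla^2_{A_i,A_j}f)^2 \leq \sfGamma^{\tensorh^* + \ell\tensorv^*}_2(f) - (\varrho_1 - \ell^{-1})\sfGamma^{\tensorh^*}(f) - \varrho_2\, \sfGamma^{\tensorv^*}(f)$ with $\varrho_1 = \rho_1$, which is precisely the factor needed in the Cauchy--Schwarz bound of the first paragraph.

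The step I expect to be the main obstacle is the vanishing of the commutator term $\tensorg(\tr_{\calH}\rnabla^2 W - \shv d\roughL f, W)$ in the vertical Bochner identity: it must be handled carefully with the Ricci identity and relies on exactly the structural facts $T^{\rnabla}|_{\calH \times \calV} = 0$, metric compatibility of $\rnabla$, and $\calV \subseteq \ker \RicH$. The remaining care is purely in the constant bookkeeping, matching $\varrho_1,\varrho_2$ against the $\rho_1,\rho_2$ of \eqref{CDII0} as above; once both points are settled, each inequality follows from elementary Cauchy--Schwarz.
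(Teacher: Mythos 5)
Your overall strategy is the same as the paper's: both inequalities come from Cauchy--Schwarz applied to $A_i\sfGamma^{\tensorh^*}(f)=2\sum_j(A_jf)\,\rnabla^2_{A_i,A_j}f$ and $A_i\sfGamma^{\tensorv^*}(f)=2\,\tensorv^*(\rnabla_{A_i}df,df)$, combined with the Bochner identity \eqref{Gamma2expression} underlying Theorem~\ref{th:CD}. Your treatment of the second inequality is correct but roundabout: the identity $\sfGamma^{\tensorv^*}_2(f)=\sum_{i,s}(\rnabla^2_{A_i,V_s}f)^2$, which you rebuild from a Weitzenb\"ock formula, the Ricci identity, $T^{\rnabla}|_{\calH\times\calV}=0$ and $\calV\subseteq\ker\RicH$, is exactly what the paper reads off from \eqref{Gamma2expression}: since $\sfGamma_2^{\tensorh^*+\ell\tensorv^*}=\sfGamma_2^{\tensorh^*}+\ell\,\sfGamma_2^{\tensorv^*}$, equating coefficients of $\ell$ and using $\rnabla\tensorv^*=0$ kills the $\rnabla\tensorv^*$ and $\roughL\tensorv^*$ terms and leaves precisely $\sum_i\|\rnabla_{A_i}df\|^2_{\tensorv^*}$. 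Your vanishing argument for the commutator term is sound (it essentially repeats the computation that produced \eqref{Gamma2expression}), so this costs length, not correctness.

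The one genuine flaw is the intermediate inequality you attribute to the proof of Theorem~\ref{th:CD}, namely
\begin{equation*}
\sfGamma^{\tensorh^*}_2(f)+\ell\,\sfGamma^{\tensorv^*}_2(f)\geq\sum_{i,j}(\rnabla^2_{A_i,A_j}f)^2+(\varrho_1-\ell^{-1})\,\sfGamma^{\tensorh^*}(f)+\bigl(\tfrac12\mcalR^2+\varrho_2\bigr)\sfGamma^{\tensorv^*}(f).
\end{equation*}
The proof yields no such statement, and the inequality is false: the term $\tfrac12\mcalR^2\,\sfGamma^{\tensorv^*}(f)$ is not an additive contribution separate from the Hessian square, it is extracted \emph{from} the antisymmetric part of $\sum_{i,j}(\rnabla^2_{A_i,A_j}f)^2$ through $\rnabla^2_{A_i,A_j}f-\rnabla^2_{A_j,A_i}f=df(\calR(A_i,A_j))$, so it cannot be kept alongside the full Hessian square. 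For a concrete contradiction, take the Heisenberg group and $f$ the vertical coordinate, evaluated at the identity: there $\shh df=0$ and $\rnabla_{A_i}df$ has no vertical component, so \eqref{Gamma2expression} gives $\sfGamma_2^{\tensorh^*+\ell\tensorv^*}(f)=\sum_{i,j}(\rnabla^2_{A_i,A_j}f)^2$ exactly, while your inequality would force $0\geq\tfrac12\mcalR^2\,\sfGamma^{\tensorv^*}(f)>0$. The error is harmless only because you immediately discard the offending term: the weakened inequality you actually feed into Cauchy--Schwarz,
\begin{equation*}
\sum_{i,j}(\rnabla^2_{A_i,A_j}f)^2\leq\sfGamma^{\tensorh^*+\ell\tensorv^*}_2(f)-(\varrho_1-\ell^{-1})\,\sfGamma^{\tensorh^*}(f)-\varrho_2\,\sfGamma^{\tensorv^*}(f),
\end{equation*}
is true and is the paper's own intermediate step. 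It follows by keeping $\sum_i\|\rnabla_{A_i}df\|^2_{\tensorh^*}$ intact in \eqref{Gamma2expression}, bounding $\RicH(\shh df,\shh df)\geq\rRicH\,\sfGamma^{\tensorh^*}(f)$, bounding $2\,\RicHV(\shv df,\shh df)$ by Young's inequality with parameter $c$, and absorbing the mixed term via $2\sum_i\rnabla_{A_i}df(\calR(A_i,\shh df))+\ell\sum_i\|\rnabla_{A_i}df\|^2_{\tensorv^*}\geq-\ell^{-1}\sfGamma^{\tensorh^*}(f)$ (completing the square, using $\McalR=1$). Replace your first display by this direct derivation, delete the spurious $\tfrac12\mcalR^2$ term, and your proof is correct and coincides with the paper's.
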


\begin{remark}
To give some context for Proposition~\ref{prop:DoubleGamma}, consider the following special case. Let $\tensorh = \tensorg$ be a complete Riemannian metric on $M$ with lower Ricci bound $\rho$ an choose $\tensorv^* = 0$. Inserting this in \eqref{CD} with $\ell = \infty$ gives us $\sfGamma^{\tensorg^*}_2(f) \geq \frac{1}{n} (\Delta f) + \rho \sfGamma^{\tensorg^*}(f)$ where $\Delta$ is the Laplacian of $\tensorg$. If we let $P_t = e^{t\Delta/2}$ be the heat semigroup of $\frac{1}{2} \Delta$, then the previously mentioned inequality implies the inequality $\sfGamma^{\tensorg^*}(P_t f) \leq e^{-\rho t} P_t\sfGamma^{\tensorg^*}(f)$ for any smooth, compactly supported function~$f$. However, Proposition~\ref{prop:DoubleGamma} gives us $\sfGamma^{\tensorg^*}(\sfGamma^{\tensorg^*}(f)) \leq 4\sfGamma^{\tensorg^*}(f) \big(\sfGamma^{\tensorg^*}_2(f) - \rho\sfGamma^{\tensorg^*}(f) \big)$ which imply the stronger result $\sfGamma^{\tensorg^*}(P_t f)^{1/2} \leq e^{-\rho/2 t} P_t (\sfGamma^{\tensorg^*}(f)^{1/2})$ for any smooth, compactly supported function $f$, see e.g. \cite[Section~2]{BaLe96}.
\end{remark}

\begin{remark} If a metric $\tensorv$ on $\calV$ exist with $\rnabla \tensorv = 0$,
then it is uniquely determined by its value at one point. To see this,
let $\tensorv^{\prime}$ be an arbitrary metric on $\calV$ and let
$\gamma$ be a horizontal curve in $M$. Define ${\rnabla}^{\prime}$
with respect to $\tensorv^{\prime}$. By
Lemma~\ref{lemma:basicsrnabla}~(a), we still have ${\rnabla}_{\dot
  \gamma}^{\prime} \tensorv =0$. Since $\calH$ is bracket-generating, 
the value of $\tensorv$ at any point can be determined by
parallel transport along a horizontal curve from one given point.
\end{remark}

\subsection{A convenient choice of bases for $\calH$ and
  $\calV$} \label{sec:normal} Let $(M, \calH, \tensorh)$ be a sub-Riemannian manifold with an integrable metric-preserving complement of $\calV$. Let $\tensorv$ be a metric tensor on $\calV$. To simplify the proof of
Theorem~\ref{th:CD}, we first want to introduce a convenient choice of
bases for $\calH$ and $\calV$ that will simplify our calculations,
similar to choosing the coordinate vector fields of a normal
coordinate system in Riemannian geometry. Let $\znabla$ be defined as
in \eqref{znabla}.
\begin{lemma} \label{lemma:normal} Given an arbitrary point $x_0$ of
  $M$, there are local orthonormal bases $A_1,\dots, A_n$ and $V_1,
  \dots, V_\nu$ of respectively $\calH$ and $\calV$ defined in a neighborhood around $x_0$ such that for any vector field $Z$,
  \begin{equation} \label{normal} \znabla_{Z} A_i|_{x_0} = \znabla_{Z}
    V_s|_{x_0} =0.\end{equation} In particular, these bases have the
  properties
  \begin{eqnarray} \label{bracketXi} \pr_{\calH} [ A_{i_1},
    A_{i_2}]|_{x_0} = 0, & \quad & \pr_{\calV} [ V_{s_1}, V_{s_2}
    ]|_{x_0} = 0.
  \end{eqnarray}
\end{lemma}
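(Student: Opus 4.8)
The plan is to carry over the Riemannian construction of a normal (synchronous) frame, but with the metric connection $\znabla$ of \eqref{znabla} playing the role of the Levi-Civita connection. The structural input I would rely on is recorded in Remark~\ref{re:znabla}: $\znabla$ satisfies $\znabla \tensorg = 0$ and its parallel transport preserves the splitting $TM = \calH \oplus \calV$. Hence $\znabla$ restricts to \emph{metric} connections on the Riemannian vector bundles $(\calH, \tensorh)$ and $(\calV, \tensorv)$, and the idea is to produce $\znabla$-normal orthonormal frames of these two bundles independently and take their union.

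First I would fix coordinates $(x_1, \dots, x_{n+\nu})$ centred at $x_0$ together with orthonormal bases $\{A_i^0\}_{i=1}^n$ of $\calH|_{x_0}$ and $\{V_s^0\}_{s=1}^\nu$ of $\calV|_{x_0}$. For $x$ in a small coordinate ball, I define $A_i|_x$ (resp.\ $V_s|_x$) to be the $\znabla$-parallel transport of $A_i^0$ (resp.\ $V_s^0$) along the radial segment $\sigma \mapsto \sigma x$. Because $\znabla$ preserves $\calH$ and $\calV$ and is metric, the transported vectors remain in $\calH$, resp.\ $\calV$, and stay orthonormal; smooth dependence of linear ODE solutions on their endpoint gives smoothness of the frames, including across $x_0$. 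By construction they satisfy the radial gauge condition $\znabla_R A_i = \znabla_R V_s = 0$, where $R = \sum_j x_j \partial_{x_j}$ is the Euler vector field.

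Next I would extract \eqref{normal} from this gauge condition. Writing $\znabla_{\partial_{x_j}} A_i = \Gamma_{ji}^k A_k$ (summation implied), the condition $\znabla_R A_i = 0$ reads $\sum_j x_j \Gamma_{ji}^k \equiv 0$ near $x_0$; differentiating in $x_l$ and evaluating at $x_0$, where all $x_j$ vanish, leaves $\Gamma_{li}^k(x_0) = 0$, and similarly for the $V_s$. Since a connection is tensorial in its direction argument, this yields $\znabla_Z A_i|_{x_0} = \znabla_Z V_s|_{x_0} = 0$ for every vector field $Z$.

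Finally, for the bracket identities \eqref{bracketXi} I would invoke the torsion of $\znabla$ at $x_0$, where the covariant derivatives just proved to vanish. For horizontal $A_{i_1}, A_{i_2}$ one has $\znabla_{A_{i_1}} A_{i_2} = \rnabla_{A_{i_1}} A_{i_2}$, so the torsion agrees with that of $\rnabla$ and equals $-\calR(A_{i_1}, A_{i_2})$ by Lemma~\ref{lemma:basicsrnabla}(b); consequently
\[
[A_{i_1}, A_{i_2}]|_{x_0} = -T^{\znabla}(A_{i_1}, A_{i_2})|_{x_0} = \calR(A_{i_1}, A_{i_2})|_{x_0},
\]
which is vertical, so its horizontal part vanishes. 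A direct computation from \eqref{znabla} gives $T^{\znabla}(V_{s_1}, V_{s_2}) = -\pr_{\calH}[V_{s_1}, V_{s_2}]$, which vanishes since $\calV$ is integrable, so the same reasoning forces $[V_{s_1}, V_{s_2}]|_{x_0} = 0$. I expect no genuine obstacle here: the construction is entirely standard, and the one point that really needs attention is that radial parallel transport must be performed with the \emph{metric} connection $\znabla$ — for which parallel transport preserves orthonormality — rather than with $\rnabla$, which is not metric in general.
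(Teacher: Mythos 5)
Your proposal is correct, but it takes a genuinely different route from the paper's proof. The paper uses no parallel transport: it takes $\tensorg$-normal coordinates at $x_0$ adapted to the splitting $\calH_{x_0} \oplus \calV_{x_0}$, projects the coordinate vector fields onto $\calH$ and $\calV$, and renormalizes by inverting coefficient matrices so that each frame field becomes a coordinate vector field plus a correction vanishing at $x_0$ and pointing only in the complementary coordinate directions; then \eqref{normal} holds because the offending derivative terms at $x_0$ are vertical (resp.\ horizontal) and are killed by the projections in \eqref{znabla}, \eqref{bracketXi} is checked by direct computation on these explicit fields, and a final Gram--Schmidt step --- harmless since $\znabla \tensorg = 0$ --- restores orthonormality. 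You instead invoke the classical radial (synchronous) gauge for the metric connection $\znabla$: parallel transport of a fixed orthonormal basis along coordinate rays, extraction of \eqref{normal} by differentiating $\sum_j x_j \Gamma_{ji}^k \equiv 0$ at the origin, and deduction of \eqref{bracketXi} from \eqref{normal} via the torsion identities $T^{\znabla}(A_{i_1},A_{i_2}) = -\pr_{\calV}[A_{i_1},A_{i_2}]$ and $T^{\znabla}(V_{s_1},V_{s_2}) = -\pr_{\calH}[V_{s_1},V_{s_2}]$. Your argument is shorter, needs neither normal coordinates nor Gram--Schmidt, works verbatim for any metric connection preserving an orthogonal splitting, and makes transparent why \eqref{bracketXi} is labelled ``in particular'': the torsion of $\znabla$ exchanges horizontal and vertical components, so \eqref{bracketXi} genuinely follows from \eqref{normal}. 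The paper's construction is more elementary and exhibits the frames concretely as perturbations of coordinate fields, but must verify the two conclusions separately. Either set of frames serves equally well downstream (Corollary~\ref{cor:normal} and the proof of Theorem~\ref{th:CD}), since only \eqref{normal} and the difference tensor $\rnabla - \znabla$ are used there. One small remark: your appeal to integrability of $\calV$ to conclude $[V_{s_1},V_{s_2}]|_{x_0} = 0$ is legitimate in the setting of Section~\ref{sec:normal} but unnecessary --- your torsion identity alone already yields $\pr_{\calV}[V_{s_1},V_{s_2}]|_{x_0} = 0$, and avoiding integrability keeps the lemma valid in the non-integrable setting of Section~\ref{sec:NotIntegrable}, where the paper reuses it.
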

\begin{proof}
Define a Riemannian metric $\tensorg$ by $\tensorg = \pr_{\calH}^* \tensorh + \pr_{\calV}^* \tensorv$.  Let $(x_1,\dots, x_{n+\nu})$ be a normal coordinate system relative to $\tensorg$ centered
  at $x_0$ such that
$$\calH_{x_0} = \spn\left\{ \left. \frac{\partial}{\partial x_1} \right|_{x_0}\!\!\!, \dots, \left. \frac{\partial}{\partial x_n} \right|_{x_0}  \right\}, \quad
\calV_{x_0} = \spn\left\{ \left. \frac{\partial}{\partial x_{n+1}}
  \right|_{x_0}\!\!\!, \dots, \left. \frac{\partial}{\partial
      x_{n+\nu}} \right|_{x_0} \right\}.$$ Define $Y_j = \pr_{\calH}
\frac{\partial}{\partial x_j}$ and $Z_j = \pr_{\calV}
\frac{\partial}{\partial x_{n +j}}$. These vector fields are linearly
independent close to $x_0$. Write
$$Y_j = \sum_{i=1}^{n+\nu} a_{ij} \frac{\partial}{\partial x_i}, \qquad Z_s = \sum_{i=1}^{n+\nu} b_{is} \frac{\partial}{\partial x_i} ,$$
where
$$a_{ij}(x_0) = \left\{\begin{array}{ll} 1 & \text{ if } i=j \\ 0 & \text{ if } i \neq j \end{array} \right. , \qquad b_{is}(x_0) = \left\{\begin{array}{ll} 1 & \text{ if } i=s +n \\ 0 & \text{ if } i \neq s +n \end{array} \right. , $$
and consider the matrix-valued functions
$$a = (a_{ij})_{i,j=1}^n, \qquad b =( b_{n+r,n+s} )_{r,s=1}^\nu.$$
These matrices remain invertible in a neighborhood of $x_0$. On this
mentioned neighborhood, let $\alpha = (\alpha_{ij}) = a^{-1} $ and $\beta =
(\beta_{rs}) = b^{-1} $. Define $\widetilde Y_j = \sum_{i=1}^n \alpha_{ij}
Y_{i}$ and $\widetilde Z_s = \sum_{r=1}^\nu \beta_{rs} Z_{r}$. These bases
can then we written in the form
\begin{align*}
  \widetilde Y_j = \frac{\partial}{\partial x_j} +
  \sum_{i=n+1}^{n+\nu} \widetilde a_{ij} \frac{\partial}{\partial x_i}
  , \quad \widetilde Z_j = \frac{\partial}{\partial x_{n+j}} +
  \sum_{i=1}^{n} \widetilde b_{ij} \frac{\partial}{\partial x_i} \, ,
\end{align*}
for some functions $\widetilde a_{ij}$ and $\widetilde b_{ij}$ which
vanish at $x_0$. These bases clearly satisfy \eqref{normal} and
\eqref{bracketXi}.

Since $\znabla$ preserves the metric, we can use the Gram-Schmidt
process to obtain $A_1, \dots, A_n$ and $V_1, \dots V_{\nu}$ from
respectively $\widetilde Y_1, \dots, \widetilde Y_\nu$ and $\widetilde
Z_1, \dots, \widetilde Z_\nu$.\qed
\end{proof}

By computing $\rnabla - \znabla$, we obtain the following corollary.
\begin{corollary} \label{cor:normal} Given an arbitrary point $x_0$ of
  $M$, then around $x_0$ there are local orthonormal bases $A_1,\dots,
  A_n$ and $V_1, \dots, V_\nu$ of respectively $\calH$ and $\calV$
  such that for any vector field $Z$,
  \begin{align*}
    &\rnabla_{Z} A_i|_{x_0}  = \frac{1}{2} \sharp \tensorg(Z, \calR(A_i, \newbullet))|_{x_0}, \\
    &\rnabla_{Z} V_s|_{x_0} = - \frac{1}{2} \sharp (\rnabla_{Y}
    \tensorg)(V_s, \newbullet) |_{x_0} ,
  \end{align*}
  where $\sharp: T^*M \to TM$ is the identification defined relative
  to $\tensorg$. In particular, these bases have the properties $\pr_{\calH} [ A_{i_1}, A_{i_2}]|_{x_0} = 0,$ and $\pr_{\calV} [V_{s_1}, V_{s_2} ]|_{x_0} = 0$.
\end{corollary}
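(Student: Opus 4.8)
The plan is to realise both formulas through the difference tensor $D := \rnabla - \znabla$. Since $\rnabla$ and $\znabla$ are both affine connections, $D$ is $C^\infty(M)$-bilinear, i.e.\ a genuine $(1,2)$-tensor, so $D_X Y|_{x_0}$ depends only on the pointwise values of $X$ and $Y$. First I would compute $D$ explicitly. Writing $X_h = \pr_{\calH} X$, $X_v = \pr_{\calV} X$ and using that the Levi-Civita connection $\nabla$ of $\tensorg$ is torsion-free to rewrite the bracket terms of \eqref{rnabla} as $[\pr_{\calV}X, \pr_{\calH}Y] = \nabla_{X_v} Y_h - \nabla_{Y_h} X_v$ and similarly for the other bracket, a direct cancellation against the terms of \eqref{znabla} gives
\begin{equation*}
D_X Y = \rnabla_X Y - \znabla_X Y = -\pr_{\calH} \nabla_{\pr_{\calH} Y} \pr_{\calV} X - \pr_{\calV} \nabla_{\pr_{\calV} Y} \pr_{\calH} X .
\end{equation*}
(One checks $C^\infty$-linearity in $X$ using $\pr_{\calH} X_v = 0$ and $\pr_{\calV} X_h = 0$.) Because the bases of Lemma~\ref{lemma:normal} satisfy $\znabla_Z A_i|_{x_0} = \znabla_Z V_s|_{x_0} = 0$, evaluating at $x_0$ reduces the corollary to identifying $D_Z A_i$ and $D_Z V_s$.

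For the horizontal identity, specialising the formula to the horizontal field $A_i$ gives $D_Z A_i = -\pr_{\calH}\nabla_{A_i} \pr_{\calV} Z$, which is horizontal. I would evaluate it against an arbitrary horizontal $W$: metric compatibility of $\nabla$ together with $\tensorg(\pr_{\calV} Z, W) = 0$ turns $\tensorg(D_Z A_i, W)$ into $\tensorg(\pr_{\calV} Z, \pr_{\calV}\nabla_{A_i} W)$. The point is then to split $\tensorg(\pr_{\calV}\nabla_{A_i} W, V)$ into its antisymmetric and symmetric parts in $(A_i, W)$: the antisymmetric part is $\tfrac12\tensorg(\calR(A_i,W),V)$ by torsion-freeness and the definition \eqref{EhresmannCurv} of $\calR$, while the symmetric part equals $-\tfrac12(\calL_V \tensorg)(A_i,W)$ and hence vanishes precisely because $\calV$ is metric-preserving, this being the bundle-like condition \eqref{mpintegral}. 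This yields $\tensorg(D_Z A_i, W) = \tfrac12 \tensorg(Z, \calR(A_i, W))$ for all horizontal $W$, i.e.\ $D_Z A_i = \tfrac12 \sharp\, \tensorg(Z, \calR(A_i, \newbullet))$, noting that the right-hand one-form annihilates $\calV$ so that its $\sharp$ is horizontal.

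For the vertical identity, the formula gives $D_Z V_s = -\pr_{\calV}\nabla_{V_s}\pr_{\calH} Z$, which is vertical. Testing against a vertical $U$ and again using metric compatibility with $\tensorg(\pr_{\calH}Z, U) = 0$ rewrites $\tensorg(D_Z V_s, U)$ as $\tensorg(\pr_{\calH}Z, \pr_{\calH}\nabla_{V_s} U)$; since $\calV$ is integrable, $\pr_{\calH}\nabla_{V_s} U$ is the second fundamental form $\II(V_s, U)$ of the leaves, and Lemma~\ref{lemma:basicsrnabla}~(d) identifies $\tensorg(\pr_{\calH}Z, \II(V_s, U))$ with $-\tfrac12 (\rnabla_Z \tensorg)(V_s, U)$ (here one also uses that $\rnabla_{\pr_{\calV}Z}\tensorg = 0$ by Lemma~\ref{lemma:basicsrnabla}~(c), so only $\pr_{\calH}Z$ contributes). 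After checking that $(\rnabla_Z\tensorg)(V_s,\newbullet)$ annihilates $\calH$, so that its $\sharp$ is vertical, one obtains $D_Z V_s = -\tfrac12 \sharp\,(\rnabla_Z\tensorg)(V_s,\newbullet)$. Finally, the bracket relations $\pr_{\calH}[A_{i_1},A_{i_2}]|_{x_0} = 0$ and $\pr_{\calV}[V_{s_1},V_{s_2}]|_{x_0}=0$ are inherited verbatim from \eqref{bracketXi}, since the same frames are used.

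The main obstacle is the metric-preserving symmetrisation in the horizontal case: everything hinges on recognising that the symmetric part of $(A,W) \mapsto \tensorg(\pr_{\calV}\nabla_A W, V)$ is exactly $-\tfrac12(\calL_V\tensorg)$ restricted to $\calH\times\calH$, and is therefore killed by \eqref{mpintegral}. The bookkeeping of which subbundle each side of the two identities lives in, so that testing only against horizontal, respectively vertical, vectors is legitimate, is the only other point requiring care.
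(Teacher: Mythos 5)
Your proposal is correct and takes exactly the route the paper intends: the paper's entire proof is the one-line remark that the corollary follows ``by computing $\rnabla - \znabla$,'' and your explicit computation of that difference tensor, evaluated on the frames of Lemma~\ref{lemma:normal} and identified via the metric-preserving condition and Lemma~\ref{lemma:basicsrnabla}~(d), supplies precisely the details the paper leaves implicit. You also correctly read the paper's $\rnabla_Y$ in the vertical identity as the typo it is (it should be $\rnabla_Z$).
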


\subsection{Proof of Theorem~\ref{th:CD}} \label{sec:proofThCD} Let
$\tensorh$ and $\tensorv$ be the respective metrics on $\calH$ and
$\calV$ that give us a Riemannian metric $\tensorg = \pr^*_{\calH}
\tensorh + \pr_{\calV}^* \tensorv$. Let $\flat: TM \to T^*M$ be the
map $v \mapsto \tensorg(v, \newbullet)$ with inverse $\sharp$. Let
$\shv$ be defined similar to the definition of~$\shh$ in
Section~\ref{sec:srmanifolds}. Note that $\shh = \pr_{\calH} \sharp$
and $\shv = \pr_{\calV} \sharp$.

Let $A_1, \dots, A_n$ be as in Corollary~\ref{cor:normal} relative to
some point $x_0$. Clearly, for any $f \in C^\infty(M)$, we have $L
f(x_0) = \sum_{i=1}^n A_i^2 f(x_0).$ Note also that
$$\rnabla_A df(Z) = \rnabla_Z df(A) - df(T^{\rnabla}(A,Z)) = \rnabla_Z df(A) + df(\calR(A,Z)).$$
In the following calculations, since $\calV$ is metric-pre\-serv\-ing, keep in mind that
$$\rnabla_A \shh df =\shh \rnabla_A df,$$
while $\rnabla_{A} \shv df = \shv \rnabla_{A} df +
(\rnabla_{A} \tensorv^*)(df, \newbullet)$,.

Below, all terms are evaluated at $x_0$. We first note that for any
$\ell > 0$,
\begin{align*}
  \sfGamma^{\tensorh^* + \ell \tensorv^*}_2(f) &= \frac{1}{2}\sum_{i=1}^n A_i^2 \left(\|df\|^2_{\tensorh^*} + \ell \|df\|^2_{\tensorv^*} \right) - \tensorh^*(df, dLf) - \ell \tensorv^*(df, dLf) \\
  &=  \sum_{i=1}^n A_i \rnabla_{A_i} df( \shh df) + \ell \sum_{i=1}^n A_i \rnabla_{A_i} df( \shv df)\\
  &\quad + \frac{1}{2} \ell\sum_{i=1}^n A_i (\rnabla_{A_i}
  \tensorv^*)(df, df)
  - (\shh df + \ell \shv df)\left( \sum_{i=1}^n \rnabla_{A_i} df(A_i) \right) \\
  &=  \sum_{i=1}^n \rnabla_{A_i} \rnabla_{\shh df} df(A_i) + \sum_{i=1}^n A_i df(\calR(A_i, \shh df))\\
  &\quad + \ell \sum_{i=1}^n \rnabla_{A_i} \rnabla_{\shv df} df(A_i)
  - \sum_{i=1}^n \rnabla_{\shh df} \rnabla_{A_i} df(A_i)\\&\quad - \ell \sum_{i=1}^n \rnabla_{\shv df} \rnabla_{A_i} df(A_i) - \frac{1}{2} \ell \sum_{i=1}^n df(\calR(A_i, \shh \rnabla_{A_i} df)) \\
  &\quad + \frac{1}{2} \ell (\roughL \tensorv^*)(df, df) + \ell
  \sum_{i=1}^n (\rnabla_{A_i} \tensorv^*)(\rnabla_{A_i} df, df).
\end{align*}
Observe that
\begin{align*}
  \rnabla_{A_i} &\rnabla_{\shh df} df(A_i) - \rnabla_{\shh df} \rnabla_{A_i} df(A_i) \\
  &=  \sum_{i=1}^n \tensorg(R^{\rnabla}(A_i, \shh df) \shh df, A_i) + \sum_{i=1}^n \rnabla_{[A_i, \shh df]} df(A_i) \\
  &=  \Ric_{\calH}(\shh df, \shh df) + \sum_{i=1}^n \rnabla_{\rnabla_{A_i} \shh df } df(A_i) + \sum_{i=1}^n \rnabla_{\calR(A_i, \shh df)} df(A_i) \\
  &=  \Ric_{\calH}(\shh df, \shh df) + \sum_{i=1}^n \| \rnabla_{A_i} df\|^2_{\tensorh^*} \\
  &\quad - \sum_{i=1}^n df(\calR(A_i, \shh \rnabla_{A_i} df)) +
  \sum_{i=1}^n \rnabla_{A_i} df(\calR(A_i, \shh df)),
\end{align*}
while
\begin{align} \label{VerticalComputation} \rnabla_{A_i}& \rnabla_{\shv
    df} df(A_i) - \rnabla_{\shv df} \rnabla_{A_i} df(A_i) \\ \nonumber
  &= \sum_{i=1}^n \tensorg(R^{\rnabla}(A_i, \shv df) \shh df, A_i) \\
  \nonumber & \quad+ \sum_{i=1}^n \rnabla_{\rnabla_{A_i} \shv df}
  df(A_i) - \sum_{i=1}^n \rnabla_{\rnabla_{\shv df} \, A_i} df(A_i) \\
  \nonumber & = \sum_{i=1}^n \rnabla_{\shv \rnabla_{A_i} df +
    (\rnabla_{A_i} \tensorv^*)(df, \newbullet)} df(A_i) - \frac{1}{2}
  \sum_{i=1}^n \rnabla_{\shh df(\calR(A_i, \newbullet))} df(A_i) \\
  \nonumber &= \sum_{i=1}^n \| \rnabla_{A_i} df\|^2_{\tensorv^*} +
  \sum_{i=1}^n (\rnabla_{A_i} \tensorv^*)(df, \rnabla_{A_i} df) \\
  \nonumber &\quad - \frac{1}{2} \sum_{i=1}^n df(\calR(A_i, \shh
  \rnabla_{A_i} df)) + \| df(\calR(\newbullet,
  \newbullet))\|^2_{\wedge^2 \tensorg^*}.
\end{align}
Hence
\begin{align*}
  \sfGamma^{\tensorh^* + \ell \tensorv^*}_2(f) = &  \sum_{i=1}^n \| \rnabla_{A_i} df\|^2_{\tensorh^*} + \Ric_{\calH}(\shh df, \shh df) - \sum_{i=1}^n df(\calR(A_i, \shh \rnabla_{A_i} df))  \\
  & + \sum_{i=1}^n \rnabla_{A_i} df(\calR(A_i, \shh df))  + \sum_{i=1}^n A_i df(\calR(A_i, \shh df)) \\
  & + \ell \sum_{i=1}^n \| \rnabla_{A_i} df\|^2_{\tensorv^*} + \ell
  \sum_{i=1}^n (\rnabla_{A_i} \tensorv^*)(df, \rnabla_{A_i} df) \\
  \nonumber
  & - \ell \sum_{i=1}^n df(\calR(A_i, \shh \rnabla_{A_i} df)) + \ell \| df(\calR(\newbullet, \newbullet))\|^2_{\wedge^2 \tensorg^*} \\
  & + \frac{1}{2} \ell (\roughL \tensorv^*)(df, df) + \ell
  \sum_{i=1}^n (\rnabla_{A_i} \tensorv^*)(\rnabla_{A_i} df, df).
\end{align*}

By realizing that
\begin{align*} \sum_{i=1}^n A_i df(\calR(A_i, \shh df)) = &
  \RicHV(\sharp df, \sharp df) + \sum_{i=1}^n \rnabla_{A_i}
  df(\calR(A_i, \shh df)) \\ & + \sum_{i=1}^n df(\calR(A_i, \shh
  \rnabla_{A_i} df)),\end{align*} and that
$$df(\calR(A_i, \shh \rnabla_{A_i} df)) = \|df(\calR(\newbullet, \newbullet))\|^2_{\wedge^2 \tensorg^*}, $$
we obtain
\begin{align} \label{Gamma2expression} \sfGamma^{\tensorh^* + \ell
    \tensorv^*}_2(f) = & \sum_{i=1}^n \| \rnabla_{A_i}
  df\|_{\tensorh^*}^2 +\RicH(\shh df, \shh df) \\ \nonumber & +
  \RicHV(\sharp df, \sharp df) + 2 \sum_{i=1}^n \rnabla_{A_i}
  df(\calR(A_i, \shh df)) \\ \nonumber & + \ell \sum_{i=1}^n
  \|\rnabla_{A_i} df\|_{\tensorv^*}^2 + 2 \ell \sum_{i=1}^n
  (\rnabla_{A_i} \tensorv^*)(df, \rnabla_{A_i} df) \\ \nonumber & +
  \frac{1}{2} \ell (\roughL \tensorv^*)(df, df) .
\end{align}
Clearly
\begin{align*}
  &   \ell \sum_{i=1}^n \|\rnabla_{A_i} df\|_{\tensorv^*}^2+ 2 \rnabla_{A_i} df(\calR(A_i, \shh df))  + 2 \ell \sum_{i=1}^n (\rnabla_{A_i} \tensorv^*)(df, \rnabla_{A_i} df) \\
  &\quad\geq  - \frac{1}{\ell} \sum_{i=1}^n \| \ell (\rnabla_{A_i} \tensorv^*)(df, \newbullet) + \calR(A_i, \shh df)\|_{\tensorv}^2 \\
  &\quad \geq - \frac{\McalR^2}{\ell} \sfGamma^{\tensorh^*}(f) - 2
  \McalR \MII \sqrt{\sfGamma^{\tensorh^*}(f)
    \sfGamma^{\tensorv^*}(f)}- \ell \MII^2 \sfGamma^{\tensorv^*}(f),
\end{align*}
and also
\begin{align*}
  & \sum_{i=1}^n \|\rnabla_{A_i} df\|^2_{\tensorh^*} = \sum_{i,j=1}^n \left( \frac{1}{2} (\rnabla_{A_i,A_j}^2 f + \rnabla^2_{A_j, A_i} f) + \frac{1}{2} (\rnabla_{A_i,A_j}f - \rnabla_{A_i, A_j} f )\right)^2 \\
  &\quad=   \sum_{i,j=1}^n \left( \frac{1}{2} (\rnabla_{A_i,A_j}^2 f + \rnabla^2_{A_j,A_i} f) \right)^2 + \sum_{i,j=1}^n \left(  \frac{1}{2} (\rnabla_{A_i,A_j}f - \rnabla_{A_i,A_j} f )\right)^2 \\
  &\quad\geq \sum_{i=1}^n (\rnabla_{A_i,A_i}^2 f)^2 +
  \sum_{i,j=1}^n(\frac{1}{2} df(\calR(A_i,A_j)))^2 \geq \frac{1}{n} (L
  f)^2 + \frac{1}{2} \mcalR^2 \sfGamma^{\tensorv^*}(f).
\end{align*}
In conclusion
\begin{align} \label{CDnonNorm} \sfGamma^{\tensorh^* + \ell
    \tensorv^*}_2(f) & \geq \frac{1}{n} (L f)^2 + \left(\rRicH -
    \frac{\McalR^2}{\ell} \right) \sfGamma^{\tensorh^*}(f) \\
  \nonumber &\quad - (2 \MRicHV + 2 \McalR \MII)
  \sqrt{\sfGamma^{\tensorh^*}(f) \sfGamma^{\tensorv^*}(f)} \\
  \nonumber &\quad + \frac{1}{2} (\mcalR^2 + \ell( \rLv - 2\MII^2))
  \sfGamma^{\tensorv^*}(f) \\ \nonumber &\geq  \frac{1}{n} (L f)^2 +
  \left( \rRicH - \frac{1}{c} - \frac{\McalR^2}{\ell} \right)
  \sfGamma^{\tensorh^*}(f) \\ \nonumber &\quad+ \frac{1}{2} \left( \mcalR^2
    - 2c (\MRicHV + \McalR \MII)^2\right) \sfGamma^{\tensorv^*}(f) \\
  \nonumber &\quad + \ell (\rLv -2 \MII^2) \sfGamma^{\tensorv^*}(f) .
\end{align}

\subsection{Proof of
  Proposition~\ref{prop:DoubleGamma}} \label{sec:ProofDoubleGamma} Let
$A_1, \dots, A_n$ be a local orthonormal basis of~$\calH$. From the
assumption $\rnabla \tensorv^* = 0$ and \eqref{Gamma2expression}, we
obtain $\sfGamma^{\tensorv^*}_2(f) = \sum_{i=1}^n \|\rnabla_{A_i}
df\|^2_{\tensorv^*}$ and from this, we know
\begin{align*}
  \sfGamma^{\tensorh^*}(\sfGamma^{\tensorv^*}(f)) &= 4 \sum_{i=1}^n \tensorv^*(\rnabla_{A_i} df, df)^2 \\
  &\leq 4 \sum_{i=1}^n \|\rnabla_{A_i} df\|^2_{\tensorv^*}
  \|df\|_{\tensorv^*} = 4 \sfGamma^{\tensorv^*}_2(f)
  \sfGamma^{\tensorv^*}(f).
\end{align*}
Similarly, from \eqref{Gamma2expression},
\begin{align*}
  \sum_{i=1}^n \| \rnabla_{A_i} df\|^2_{\tensorh^*} 
&=\sfGamma^{\tensorh^* + \ell \tensorv^*}_2(f) - \Ric_{\calH}(\shh df,\shh df) - 2 \RicHV(\shv df,\shh df)\\ &\quad- 2 \sum_{i=1}^n\rnabla_{A_i} df(\calR(A_i, \shh df)) 
+ \ell \sum_{i=1}^n \|\rnabla_{A_i} df\|^2_{\tensorv^*} \\
  &\leq \sfGamma^{\tensorh+ \ell \tensorv^*}_2(f) - (\rRicH - c^{-1}-
  \ell^{-1}) \sfGamma^{\tensorh^*}(f) + c \MRicHV^2
  \sfGamma^{\tensorv^*}(f),
\end{align*}
and the result follows.\qed

\subsection{For a general choice of $L$} \label{sec:GeneralL} Let $(M,
\calH, \tensorh)$ and $\calV$ be as in Section~\ref{sec:GenCD} and let
$\roughL$ be the sub-Laplacian of $\calV$.  For a general choice of
$L$ with $\tensorq_L = \tensorh^*$, write $L = \roughL + Z$ for some
vector field $Z$. We want to use our generalized curvature-dimension
inequality for $\roughL$ to extend it to a more general class of
operators. Unfortunately, our possibilities are somewhat limited.

\begin{proposition} \label{prop:LGeneral} Let $L = \roughL + Z$ where
  $Z \in \Gamma(\calV)$ is a non-zero vertical vector field. Then $L$
  satisfies \eqref{CDstar} with $\rank \calH < n \leq \infty$ and
  $\rho_1, \rho_{2,0}, \rho_{2,1}$ given by
  \begin{align*}  \rho_1  & =  \rRicH - c^{-1}, \\
    \rho_{2,0} & =  \frac{1}{2} {m_{\calR}^2} - c(\scrM_{\calH\calV}^Z + \MII)^2 - \frac{1}{n - \rank \calH} \|Z\|_{\tensorv^*}^2, \\
    \rho_{2,1} & = \frac{1}{2} \rLv - \MII^2 - \scrN^2,
  \end{align*}
  for any positive $c > 0$. Here, $-\scrM_{\calH\calV}^Z$ is a lower bound
  of
  \begin{align*} \Ric_{\calH\! \calV}^Z(A_1,A_2)& : =  \RicHV(A_1,A_2) + \frac{1}{2} \tensorh(\pr_{\calH} A_1, \rnabla_{\pr_{\calH} A_2} Z) + \frac{1}{2} \tensorh(\pr_{\calH} A_2, \rnabla_{\pr_{\calH} A_1} Z) \\
    &\quad + \frac{1}{2} \tensorv( \pr_{\calV} A_1, \calR(Z,A_2)) +
    \frac{1}{2} \tensorv( \pr_{\calV} A_2, \calR(Z,A_1)) \end{align*} and
  $\scrN$ is a lower bound of $\tensorv(\pr_{\calV} \newbullet,
  \rnabla_{\pr_{\calV\newbullet}} Z)$. The other constants are as in
  Section~\textup{\ref{sec:GenCD}}.
\end{proposition}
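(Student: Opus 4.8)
The plan is to deduce the inequality for $L=\roughL+Z$ from the unnormalized inequality \eqref{CDnonNorm} already proved for $\roughL$, by tracking exactly how the added first-order term $Z$ perturbs each ingredient. The starting point is the elementary observation that, for any symmetric $\tensors^*\in\Gamma(\Sym^2 TM)$ and any vector field $Z$,
$$\sfGamma^{\tensors^*}_2(f)\big|_{L=\roughL+Z}=\sfGamma^{\tensors^*}_2(f)\big|_{L=\roughL}+\tfrac12(\calL_Z\tensors^*)(df,df).$$
Indeed, the two extra first-order contributions $\tfrac12 Z\,\sfGamma^{\tensors^*}(f)$ and $-\sfGamma^{\tensors^*}(Zf,f)$ combine, after using $\calL_Z\,df=d(Zf)$, to leave only the Lie derivative of $\tensors^*$. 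By linearity, applying this with $\tensors^*=\tensorh^*+\ell\tensorv^*$ reduces the whole problem to (a) the two Lie-derivative corrections $(\calL_Z\tensorh^*)(df,df)$ and $(\calL_Z\tensorv^*)(df,df)$, and (b) the discrepancy produced in the dimensional term, since now $Lf=\roughL f+Zf$.

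For (a) I would rewrite $\calL_Z$ in terms of $\rnabla$ via the identity $\calL_Z\alpha=\rnabla_Z\alpha+\alpha(\rnabla_{\newbullet}Z)+\alpha(T^{\rnabla}(Z,\newbullet))$ for one-forms, which gives
$$(\calL_Z\tensors^*)(df,df)=(\rnabla_Z\tensors^*)(df,df)-2\,df\big(\rnabla_{\shs df}Z+T^{\rnabla}(Z,\shs df)\big).$$
Since $Z$ is vertical, $\calR(Z,\newbullet)=0$, so all torsion terms drop. Specializing to $\tensorh^*$ and using $\rnabla\tensorh^*=0$ from Lemma~\ref{lemma:basicsrnabla}(c) together with $\rnabla_AZ=\pr_{\calV}[A,Z]$ for horizontal $A$, the $\tensorh^*$-correction becomes the mixed term $-2\,\tensorv(\shv df,\rnabla_{\shh df}Z)$; specializing to $\tensorv^*$, the correction splits into $(\rnabla_Z\tensorv^*)(df,df)$, absorbed by $\MII$, and a purely vertical term $-2\,\tensorv(\shv df,\rnabla_{\shv df}Z)$. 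The mixed term combines with the $\RicHV(\sharp df,\sharp df)$ already present in \eqref{Gamma2expression} to form $\Ric^Z_{\calH\calV}(\sharp df,\sharp df)$, whose lower bound is $-\scrM^Z_{\calH\calV}$; the vertical term is estimated through $\scrN$, and after completing the square against the $\rLv$- and $\MII$-terms contributes the $-\scrN^2$ appearing in $\rho_{2,1}$.

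For (b) I would use that, at the point $x_0$ where the adapted basis of Corollary~\ref{cor:normal} is chosen, $Lf=\sum_{i=1}^{\rank\calH}\rnabla^2_{A_i,A_i}f+Zf$. Distributing $Zf$ over $n-\rank\calH$ auxiliary slots and applying the Cauchy--Schwarz inequality to $n$ terms yields
$$\sum_{i=1}^{\rank\calH}(\rnabla^2_{A_i,A_i}f)^2\ \ge\ \frac1n(Lf)^2-\frac{1}{\,n-\rank\calH\,}(Zf)^2,$$
valid for any $n$ with $\rank\calH<n\le\infty$; together with $(Zf)^2=\tensorv(\shv df,Z)^2\le\|Z\|^2_{\tensorv^*}\,\sfGamma^{\tensorv^*}(f)$ this produces the extra summand $-\tfrac{1}{n-\rank\calH}\|Z\|^2_{\tensorv^*}$ in $\rho_{2,0}$. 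Substituting all of this into \eqref{CDnonNorm} and re-running the same optimization in $c$ and $\ell$, now with $\MRicHV$ replaced by $\scrM^Z_{\calH\calV}$ and with the additional vertical contribution, gives the stated constants.

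I expect the second step to be the main obstacle: correctly carrying out the Lie-derivative computation, identifying the mixed and vertical pieces with the geometric tensors $\Ric^Z_{\calH\calV}$ and with $\scrN$, and—above all—keeping track of signs and of which terms are absorbed by completing squares versus which genuinely lower the constants, so that the final $\rho_1,\rho_{2,0},\rho_{2,1}$ come out exactly as stated. The dimensional estimate in (b), by contrast, is a clean and standard use of the extra vertical directions opened up by allowing $n>\rank\calH$.
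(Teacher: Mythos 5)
Your proposal takes essentially the same route as the paper: the paper's entire proof consists of exactly the three identities you derive, namely the perturbations $\tfrac{1}{2}Z\sfGamma^{\tensors^*}(f)-\sfGamma^{\tensors^*}(Zf,f)$ (your $\tfrac{1}{2}(\calL_Z\tensors^*)(df,df)$) for $\tensors^*=\tensorh^*$ and $\tensors^*=\tensorv^*$, rewritten through $\rnabla_{\shs df}Z$, the torsion $T^{\rnabla}=-\calR$ and $\rnabla_Z\tensorv^*$, together with the dimensional inequality $\tfrac{1}{\rank\calH}(\roughL f)^2\geq\tfrac{1}{n}(\roughL f+Zf)^2-\tfrac{1}{n-\rank\calH}(Zf)^2$, after which the paper simply states that the result follows by re-running the estimates behind \eqref{CDnonNorm}, which is precisely your plan.

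Two points of fine print. First, your sign $-df(\rnabla_{\shh df}Z)$ for the mixed correction is the correct one (a direct check, e.g.\ on the Heisenberg group with $Z$ a function times the vertical field, confirms it), whereas the paper's first displayed identity carries $+df(\rnabla_{\shh df}Z)$; the discrepancy is immaterial for the final inequality, since the term only enters through a Cauchy--Schwarz bound on the symmetrized tensor $\Ric_{\calH\calV}^Z$. Second, the term $(\rnabla_Z\tensorv^*)(df,df)$ is not merely ``absorbed by $\MII$'': for vertical $Z$ it vanishes identically, because by Lemma~\ref{lemma:basicsrnabla}~(d) the tensor $\rnabla_{Z}\tensorg$ pairs $Z$ against the $\calH$-valued second fundamental form of the leaves. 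Noticing this vanishing is needed to land exactly on the stated constants; carried out literally, your bookkeeping would leave a spurious contribution of size $\MII\sup_M\|Z\|_{\tensorv}$ in $\rho_{2,1}$. Relatedly, your claim that ``completing the square'' produces the $-\scrN^2$ term is not substantiated: the vertical correction $df(\rnabla_{\shv df}Z)$ is of zeroth order in the Hessian of $f$, so it only admits a direct first-power bound through $\scrN$ --- but on this point the paper's one-line proof offers no more justification for its stated constant than you do, so it is a vagueness you share with the source rather than a gap specific to your argument.
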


\begin{proof} Let $\shh$ be defined as in
  Section~\ref{sec:srmanifolds} and let $\shv$ be defined
  analogously. Then the result follows from the identities,
  \begin{align*}
    \frac{1}{2} Z \sfGamma^{\tensorh^*}(f) - \sfGamma^{\tensorh^*}(Zf,f) & =   df(\rnabla_{\shh df} Z) + df(\calR(Z, \shh df)), \\
    \frac{1}{2} Z \sfGamma^{\tensorv^*}(f) - \sfGamma^{\tensorv^*}(Zf,f) &=   df(\rnabla_{\shv df} Z) + \frac{1}{2} (\rnabla_Z \tensorv^*)(df, df), \\
    \frac{1}{\rank \calH}(\roughL f)^2 & \geq \frac{1}{n}(\roughL f +
    Zf)^2 - \frac{1}{n-\rank \calH} (Zf)^2 
  \end{align*}
  which hold for any vector field $Z$ (not necessarily vertical).\qed
\end{proof}

%
%
%

The proof of Proposition~\ref{prop:LGeneral} also shows why it is
complicated to extend this formalism to the more general case. If
$\pr_{\calH} Z \neq 0$, then the term $\ell df( \rnabla_{\shh df}
\pr_{\calH} Z)$ requires a lower bound on the form $\ell b
\sfGamma^{\tensorh^*}(f) + \ell b \sfGamma^{\tensorv^*}(f)$ or $b
\sfGamma^{\tensorh^*}(f) + \ell^2 b \sfGamma^{\tensorv^*}(f)$, both of
which would be outside of our formalism.

\subsection{Generalization to the case when $\calV$ is not integrable} \label{sec:NotIntegrable}
Not every vector bundle has an integrable complement \cite{Bot70}, not to mention 
a metric-pre\-serv\-ing one. We give a brief comment on how our
results can be generalized to the case when $\calV$ is not integrable.

Let $\calR$ be defined as in \eqref{EhresmannCurv} and let
$\overline{\calR}$ be defined by
$$\overline{\calR}(A,Z) := \pr_{\calH} [ \pr_{\calV} A, \pr_{\calV} Z], \quad A,Z \in \Gamma(TM).$$
We will adopt the terminology of \cite[Ch II.8]{KMS93} and call
$\calR$ and $\overline{\calR}$ respectively the curvature and the
co-curvature of $\calH$. Then our theory can still be applied with the
following modifications.
\begin{enumerate}[(a)]
\item We consider a complement $\calV$ as metric-preserving if
  \begin{equation} \label{mp} \pr_{\calH}^* \calL_{V} (\pr_{\calH}^*
    \tensorh) = 0, \quad V \in \Gamma(\calV).\end{equation} Notice
  the difference between the formula above and \eqref{mpintegral}. In
  fact, \eqref{mpintegral} holds if and only if \eqref{mp} holds and
  $\calV$ is integrable. Note that \eqref{mp} is equivalent to stating
  that $\rnabla \tensorh^* = 0$ with respect to any connection
  $\rnabla$ defined as in \eqref{rnabla} using some metric $\tensorg$
  which tames $\tensorh$ and makes $\calV$ the orthogonal complement
  of $\calH$.
\item In Section~\ref{sec:CurvatureDimension}, we now have that
  $T^{\rnabla} = - \calR - \overline{\calR}$ in
  Lemma~\ref{lemma:basicsrnabla}~(c). As a consequence, in
  Lemma~\ref{lemma:curvaturernabla}~(c), we obtain
$$\tensorg(R^{\rnabla}(A, Y) Z - R^{\rnabla}(A, Z) Y, A) = \tensorg( \overline{\calR}(Y, \calR(Z,A)) - \overline{\calR}(Z, \calR(Y,A)), A).$$
Hence, $$\tensorg(R^{\rnabla}(A, \pr_{\calH} Y) \pr_{\calH} Z, A)
=\tensorg( R^{\rnabla}(A, \pr_{\calH} Z) \pr_{\calH} Y, A),$$ however, we now have
$\tensorg(R^{\rnabla}(A, \pr_{\calV} Y) Z,A)  = \tensorg( \overline{\calR}(Y, \calR(Z,A)), A)$.
\item In Section~\ref{sec:normal}, Lemma~\ref{lemma:normal} still
  holds, but since $\rnabla - \znabla$ is different, in
  Corollary~\ref{lemma:normal} we have
$$\rnabla_Z V_s |_{x_0} = - \frac{1}{2} \sharp (\rnabla_Z \tensorg)(V_s, \newbullet )|_{x_0} + \frac{1}{2} \sharp \tensorg(Z, \overline{\calR}(V_s, \newbullet))|_{x_0}.$$
\item In the proof of Theorem~\ref{sec:proofThCD} in
  Section~\ref{sec:proofThCD}, the only difference is that in Eq.~\eqref{VerticalComputation}, we cannot be sure that the term
  $$\sum_{i=1}^n \tensorg(R^{\rnabla}(A_i, \shv df) \shh df, A_i) = \tr
  \overline{\calR}(\shv df, \calR(\shh df, \newbullet))$$
  vanishes. Hence, we require the separate assumption that for any
  vector $v \in TM$ on $M$, we have
  \begin{equation} \label{TrEq0} \tr \overline{\calR}(v, \calR(v,
    \newbullet)) = 0.\end{equation}
  The same assumption also guarantees that
  $$\tr R^{\rnabla}(\pr_{\calH} \newbullet, Z_1) Z_2 = \tr R^{\rnabla}(\pr_{\calH} \newbullet, \pr_{\calH} Z_1) \pr_{\calH} Z_2,$$
  so the definition of $\RicH$ in Proposition~\ref{prop:RicH} is still valid. If this does indeed hold, then
  Theorem~\ref{th:CD} remains true even if $\calV$ is not
  integrable. As a consequence, all further results in this paper and
  in Part~II also hold in this case. Since we do not have any
  geometric interpretation for the requirement \eqref{TrEq0}, we
  prefer to mainly consider the case when $\calV$ is integrable.

  The only exception are the results in Section~\ref{sec:GeneralL}. Even if
  \eqref{TrEq0} is satisfied, these results do
  do not hold when $\calV$ is not integrable.
\end{enumerate}

\begin{example}
  Consider the Lie algebra $\mathfrak{su}(2)$ with basis $A, B, C$
  satisfying commutation relations
$$[A,B] = C, \quad [A, C] = -B, \quad [B, C] = C.$$
Consider its complexification, which is isomorphic to
$\mathfrak{sl}(2, \comp)$. Define a sub-Rie\-mann\-ian manifold
$(\mathrm{SL}(2,\comp), \calH, \tensorh)$ by considering $i A, iB, iC$
and $C$ as an orthonormal basis for $\calH$. Here, we have used the
same symbol for an element of the Lie algebra and its corresponding
left invariant vector field. Then $\calV$ spanned by $A$ and $B$ is a
metric-preserving complement that is not integrable, but satisfies
\eqref{TrEq0}.
\end{example}

\section{Spectral gap and examples} \label{sec:Examples}
\subsection{The curvature-dimension inequality and a bound for the spectral gap}
Let $(M, \calH, \tensorh)$ be a compact sub-Riemannian manifold where $\calH$
is bracket-gen\-er\-at\-ing. Let $L$ be a smooth second order operator
without constant term satisfying $\tensorq_L = \tensorh^*$. Assume
also that $L$ is symmetric with respect to some volume form $\vol$ on
$M$. Since the metric $\metricd_{cc}$ induced by $\tensorh^*$ is
obviously complete on $M$, we have that $L$ is essentially
self-adjoint on $C^\infty_c(M)$ by \cite[Sec 12]{Str86}. Denote its
(unique) self-adjoint extension to an operator on $L^2(M,\vol)$ also by $L$.

\begin{proposition} \label{prop:SpectralGap} Assume that $L$ satisfies
  \eqref{CDstar} with $\rho_{2,0} > 0$. Let $\lambda$ be any nonzero
  eigenvalue of $L$. Then
$$\frac{n \rho_{2,0}}{n + \rho_{2,0}(n-1)} \left(\rho_1 - \frac{ k_2}{\rho_{2,0}} \right) 
\leq - \lambda, \qquad k_2 = \max \{0, -\rho_{2,1} \}.$$
\end{proposition}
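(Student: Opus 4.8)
The plan is to run the standard Bakry--\'Emery/Lichnerowicz integration argument, now adapted to the two-form inequality \eqref{CDstar}. Fix a nonzero eigenvalue $\lambda$ and a corresponding eigenfunction $f$, which is smooth by hypoellipticity of $L$; since $M$ is compact, $C^\infty(M) = C_c^\infty(M)$, so every integral below converges and integration by parts is unproblematic. First I would record that $L$ is negative semi-definite: pairing $Lf = \lambda f$ with $f$ gives $\lambda \int_M f^2\,\dvol = \int_M fLf\,\dvol = -\int_M\sfGamma^{\tensorh^*}(f)\,\dvol \le 0$, so $\lambda < 0$ and $I_0 := \int_M f^2\,\dvol > 0$.

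The next step is to integrate \eqref{CDstar} over $M$ against $\vol$. Because $L$ is symmetric and kills constants, $\int_M Lg\,\dvol = \int_M g\,(L1)\,\dvol = 0$ for every $g$, and hence for any symmetric tensor $\tensors^*$ one gets $\int_M \sfGamma_2^{\tensors^*}(f)\,\dvol = -\int_M \sfGamma^{\tensors^*}(Lf,f)\,\dvol = -\lambda \int_M \sfGamma^{\tensors^*}(f)\,\dvol$. Writing $I_h = \int_M\sfGamma^{\tensorh^*}(f)\,\dvol = -\lambda I_0$ and $I_v = \int_M\sfGamma^{\tensorv^*}(f)\,\dvol \ge 0$, and using $\int_M(Lf)^2\,\dvol = \lambda^2 I_0$, the integrated inequality collapses, after substituting $I_h = -\lambda I_0$, to
\begin{equation*}
\Big(1-\tfrac1n\Big)\lambda^2 I_0 + \Big(\rho_1 - \tfrac1\ell\Big)\lambda I_0 \;\ge\; \big[\ell\lambda + \rho_{2,0} + \rho_{2,1}\ell\big]\,I_v ,
\end{equation*}
valid for every $\ell > 0$.

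The main obstacle is the vertical term $I_v$, over which we have no direct control. The key idea is to choose $\ell$, as a function of the (fixed) eigenvalue $\lambda$, so that the bracket on the right is nonnegative; since $I_v \ge 0$, the entire right-hand side can then be discarded. Using $\rho_{2,1} \ge -k_2$ we have $\ell\lambda + \rho_{2,0} + \rho_{2,1}\ell \ge \rho_{2,0} + \ell(\lambda - k_2)$, and this lower bound vanishes exactly at $\ell = \rho_{2,0}/(k_2 - \lambda)$ (note $k_2 - \lambda > 0$ and $\rho_{2,0} > 0$, so this $\ell$ is positive and admissible). With this choice the right-hand side is dropped; dividing by $I_0 > 0$ and then by $\lambda < 0$ (which reverses the inequality) yields $\tfrac{n-1}{n}\lambda \le \tfrac1\ell - \rho_1$.

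Finally I would substitute $1/\ell = (k_2-\lambda)/\rho_{2,0}$ and set $\mu = -\lambda > 0$; the last display becomes the linear inequality $\mu\big(\tfrac{n-1}{n} + \tfrac1{\rho_{2,0}}\big) \ge \rho_1 - k_2/\rho_{2,0}$, which rearranges, upon clearing denominators, to exactly $-\lambda \ge \tfrac{n\rho_{2,0}}{n+\rho_{2,0}(n-1)}\big(\rho_1 - k_2/\rho_{2,0}\big)$. No genuinely hard estimate is required beyond \eqref{CDstar} itself; the only delicate point is the self-consistent choice of $\ell$ that simultaneously neutralizes $I_v$ and, after solving the resulting inequality for $\lambda$, produces precisely the stated constant.
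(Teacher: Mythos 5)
Your proposal is correct and follows essentially the same argument as the paper: integrate \eqref{CDstar} against $\vol$, use symmetry of $L$ and $Lf=\lambda f$ to reduce to a scalar inequality, and make the same $\lambda$-dependent choice $\ell = \rho_{2,0}/(k_2-\lambda)$ so that the vertical term has a nonnegative coefficient and can be dropped. The subsequent rearrangement matches the paper's conclusion exactly.
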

\begin{proof}
  Since $\calH$ is bracket-generating, we know that $L - \lambda$ is hypoelliptic for any $\lambda$
  by \cite{Hor67}, so all eigenfunctions of $L$ are
  smooth. If we write $\langle f,g \rangle = \int_M fg \dvol$, note
  that $\int_M L f \dvol = \langle L f, 1 \rangle = 0$ and $\int_M
  \sfGamma^{\tensorh^*}(f,g) \dvol = - \langle f, L g \rangle$ for $f,g \in
  C^\infty(M)$. Since $L$ is a nonpositive operator, any nonzero
  eigenvalue is negative. From \eqref{CDstar} we get
  \begin{align*}
    \int_M&( \sfGamma^{\tensorh^*}_2(f) + \ell \sfGamma^{\tensorv^*}_2(f) ) \dvol 
= \langle Lf, Lf \rangle - \langle \sfGamma^{\tensorv^*}(f,Lf), \ell \rangle  \\
    &\geq \frac{1}{n} \langle Lf, Lf \rangle - \left(\rho_1 -
      \frac{1}{\ell} \right) \langle f, Lf \rangle +
    \langle\sfGamma^{\tensorv^*}(f), \rho_{2,0} + \ell \rho_{2,1}
    \rangle.
  \end{align*}
  Hence, if $f$ satisfies $L f = \lambda f$, then
$$\frac{n-1}{n} \lambda^2 \langle f, f\rangle \geq - \lambda \left(\rho_1 - \frac{1}{\ell} \right) \langle f, f\rangle + \langle \sfGamma^{\tensorv^*}(f), \rho_{2,0} + \ell (\rho_{2,1} +\lambda) \rangle.$$
We choose $\ell = \frac{\rho_{2,0}}{-\lambda + k_2}$ and obtain
$$\frac{n-1}{n} \lambda^2 \geq - \lambda \left(\rho_1 - \frac{- \lambda + k_2}{\rho_{2,0} } \right) ,$$
from which the result follows.\qed
\end{proof}

Let $\tensorg$ be a Riemannian metric taming
$\tensorh$ such that the orthogonal complement $\calV$ of $\calH$ is
integrable. We consider the special case when $\rnabla \tensorg = 0$ where $\rnabla$ is
defined as in \eqref{rnabla}. Then $\calV$ is a metric-preserving
complement and $\rnabla \tensorv^* = 0$ where $\tensorg|_{\calV} =:
\tensorv$. Let $\vol$ be the volume form of $\tensorg$ and let $\srL$
the sub-Laplacian of $\vol$, which will also be the sub-Laplacian of
$\calV$.
\begin{corollary}
  Assume that the assumptions of Theorem~\upshape\ref{th:CD} hold with
  $$\kappa = \frac{1}{2} \rRicH \mcalR^2 - \MRicHV^2 >0.$$ Then, for any
  $\rank \calH \leq n \leq \infty$,
$$\left( \frac{2 \kappa}{2 \MRicHV + \mcalR \sqrt{2 \rRicH + 2 \frac{n-1}{n} \kappa}} \right)^2 \leq - \lambda.$$
\end{corollary}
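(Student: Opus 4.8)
The plan is to apply Proposition~\ref{prop:SpectralGap} to $L = \srL = \roughL$ and then optimize the resulting one-parameter family of bounds over the free constant $c$.

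First I would record what the hypothesis $\rnabla\tensorg = 0$ buys us. Since then $\rnabla\tensorv^* = 0$ identically, we have $\MII = 0$, and likewise $\roughL\tensorv^* = \tr_{\calH}\rnabla^2\tensorv^* = 0$, so we may take $\rLv = 0$. Hence the parameters of Theorem~\ref{th:CD} specialize to $\rho_{2,1} = \tfrac12\rLv - \MII^2 = 0$, while $\rho_1 = \rRicH - c^{-1}$ and $\rho_{2,0} = \tfrac12\mcalR^2 - c\,\MRicHV^2$. As $M$ is compact, $\calH$ is bracket-generating and $L$ is symmetric with respect to the volume form of $\tensorg$, all hypotheses of Proposition~\ref{prop:SpectralGap} hold, and $\rho_{2,1}=0$ gives $k_2 = \max\{0,-\rho_{2,1}\}=0$. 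The proposition then yields, for every $c>0$ with $\rho_{2,0}>0$,
$$ F(c) := \frac{n\,\rho_{2,0}}{n + (n-1)\rho_{2,0}}\,\rho_1 \;\leq\; -\lambda. $$

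The core of the argument is to maximize $F(c)$. Writing $t = c^{-1}$, $a = \tfrac12\mcalR^2$ and $b = \MRicHV^2$, the constraints $\rho_{2,0}>0$ and $\rho_1>0$ confine $t$ to the interval $(b/a,\rRicH)$, which is nonempty precisely because $\kappa = a\rRicH - b > 0$ (this in turn forces $\mcalR,\rRicH > 0$). On this interval $F$ is a quotient $N(t)/D(t)$ with $N$ a downward-opening quadratic that vanishes at both endpoints $t=b/a$ and $t=\rRicH$, and $D$ an affine function that is positive throughout; hence the maximum is attained at the unique interior critical point. At a critical point $N'D = ND'$, so there $F = N'/D'$, which reduces the extremal value to a linear-over-linear expression in $t$ and avoids differentiating the quotient. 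Solving $N'D=ND'$ gives a quadratic in $t$ whose discriminant simplifies, after cancellation, to a multiple of $n\bigl[n\rRicH + (n-1)\kappa\bigr]$, and the admissible root is
$$ t^\ast = \frac{a(n-1)b + \sqrt{ab\,n\bigl[n\rRicH + (n-1)\kappa\bigr]}}{a\bigl(n + (n-1)a\bigr)}, $$
which I would check lies in $(b/a,\rRicH)$ using $\kappa>0$.

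Substituting $t^\ast$ into $F = N'/D'$ and simplifying, the value $\sup_{c}F(c)$ collapses to
$$ \frac{n\bigl[\,n(a\rRicH + b) + (n-1)a\kappa - 2\sqrt{ab\,n[n\rRicH+(n-1)\kappa]}\,\bigr]}{\bigl(n+(n-1)a\bigr)^2}. $$
To match the claimed bound I would work the target backwards: using $\mcalR = \sqrt{2a}$ and $\MRicHV = \sqrt{b}$ the displayed fraction equals $\bigl(\kappa/(\MRicHV + \sqrt{a\rRicH + \tfrac{n-1}{n}a\kappa})\bigr)^2$, and rationalizing against the conjugate turns the denominator into $\kappa\bigl(n+(n-1)a\bigr)/n$; expanding the resulting square reproduces exactly the expression above for $\sup_c F(c)$. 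Since $-\lambda \geq F(c)$ for every admissible $c$, passing to the supremum gives the stated inequality. The main obstacle is purely computational: carrying out the two algebraic reductions—the discriminant of the critical-point quadratic and the conjugate expansion of the target—so that they meet, together with verifying that $t^\ast$ is the root lying in $(b/a,\rRicH)$.
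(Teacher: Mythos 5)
Your proposal is correct and is exactly the paper's argument: the paper's proof consists of specializing the parameters to \eqref{CDII0} (with $\rho_{2,1}=0$, hence $k_2=0$, since $\rnabla\tensorv^*=0$), feeding them into Proposition~\ref{prop:SpectralGap}, and ``choosing the optimal value of $c$.'' Your explicit optimization checks out --- the critical-point discriminant $ab\,n[n\rRicH+(n-1)\kappa]$ and the conjugate expansion of the target (whose denominator rationalizes to $\kappa(n+(n-1)a)/n$) do meet at the same expression --- so you have simply written out the computation the paper leaves implicit.
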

\begin{proof}
  This follows from formulas \eqref{CDII0} and by choosing the
  optimal value of $c$.\qed
\end{proof}

\subsection{Privileged metrics} \label{sec:PrivMetric} Let $(M, \calH,
\tensorh)$ be a sub-Riemannian manifold with $\calH$
bracket-generating and equiregular of step $r$ as in
Remark~\ref{re:regularH}. Let $\dim M = n + \nu$ with $n$ being the
rank of $\calH$. Any Riemannian metric $\widetilde \tensorg$ on $M$
such that $\widetilde \tensorg|_{\calH} = \tensorh$, gives us
automatically a splitting $$TM = \calH \oplus \calV^1 \oplus \cdots
\oplus \calV^{r-1}$$ where $\calV^k$ is the orthogonal complement of
$\calH^k$ in $\calH^{k+1}$. Conversely, associated to each such splitting,
there exist a canonical way of constructing a Riemannian metric~$\widetilde
\tensorg$ taming $\tensorh$, which we will call {\it privileged}. Define a vector bundle
morphism
$$\Psi\colon \calH \oplus \calH^{\otimes 2} \oplus \cdots \oplus \calH^{\otimes r} 
\to \calH \oplus \calV^1 \oplus \cdots \oplus \calV^{r-1},$$
such that $\Psi$ is the identity on the first
component, while elements in $\calH^{\otimes j}$, $j\geq2$ are sent
to $\calV^{j-1}$ by
$$A_1 \otimes A_2 \otimes \dots \otimes A_j \mapsto 
\pr_{\calV_{j-1}} [A_1, [ A_2 [ \cdots [A_{j-1} , A_j]]\cdots ]].$$
Giving $\calH^{\otimes j}$ the metric $\tensorh^{\otimes j}$,
$\Psi$ induces a metric $\widetilde \tensorg$ on $TM$ by requiring that
$\Psi|_{(\ker \Phi)^\perp}$ is a fiberwise isometry.

Assume that $\calV = \oplus_{k=1}^{r-1} \calV_k$ is an integrable
metric-preserving complement. Define~$b$ as the minimal number such that $\|\calR(v,\newbullet)\|_{\widetilde \tensorg^* \otimes \widetilde \tensorg} \leq b \|\pr_{\calH} v\|_{\widetilde \tensorg}$ for any $v \in TM$.
Note that
$$\frac{2 \dim \calV_1}{n} = \frac{2}{n} \|\calR\|_{\wedge^2 \widetilde \tensorg \otimes \tensorg}^2 \leq b^2 \leq 2 \|\calR\|_{\wedge^2 \widetilde \tensorg \otimes \tensorg}^2 = 2 \dim \calV_1.$$

We normalize the vertical part of the
metric by defining
$$\tensorv = \frac{1}{b^2} \widetilde \tensorg|_{\calV} \quad  \text{and } \tensorg = \pr_{\calH}^* \tensorh + \pr_{\calV}^* \tensorv.$$
Then $\McalR =1$, while $\mcalR = \frac{1}{b}$ if $r=2$ and
$0$ otherwise. Furthermore, if $\calH$ is of step~$2$ and $\rnabla_A
\calR = 0$ for any $A \in \Gamma(\calH)$, then~$\rnabla \tensorv^* =
0$ and $\RicHV =0$. Hence, for this special case, the sub-Laplacian $\srL$ of $\calV$ or
equivalently the volume form of $\tensorg$, satisfies curvature-dimension inequality
$$\sfGamma^{\tensorh^*+ \ell \tensorv^*}_2(f) \geq \frac{1}{n} (\srL f)^2 + (\rRicH -\ell^{-1}) \sfGamma^{\tensorh^*}(f) + \frac{1}{2b^2} \sfGamma^{\tensorv^*}(f),$$
for any $\ell > 0$. As a consequence, if $M$ is compact with $\rRicH
>0$ and $\lambda$ is a non-zero eigenvalue of $\srL$ then
$$ \frac{n}{n( 2b^2 + 1) -1} \rRicH \leq - \lambda ,$$
from Proposition~\ref{prop:SpectralGap}.

The volume forms of all privileged metric taming $\tensorh$ coincide
and is called Popp's measure. For more details, see \cite{ABGR09}.

\subsection{Sub-Riemannian manifolds with transverse
  symmetries} \label{sec:TS} For two Riemann manifolds $(M_j, \calH_j,
\tensorh_j), j = 1,2$, \emph{a sub-Riemannian isometry} $\phi:M_1 \to M_2$ is
a diffeomorphism such that $\phi^* \tensorh_2^* = \tensorh_1^*$. The
later requirement can equivalently be written as $\phi_* \calH_1
\subseteq \calH_2$ and $\tensorh_2(\phi_* v, \phi_* v) =
\tensorh_1(v,v)$ for any $v \in \calH_1$. \emph{An infinitesimal isometry} of
a sub-Riemannian manifold $(M, \calH, \tensorh)$ is a vector field $V$
such that
\begin{equation} \label{InfIsometry} \calL_{V} \tensorh^* =
  0.\end{equation} If $V$ is complete with flow $\phi_t$, then this
flow is an isometry from $M$ to itself for every fixed $t$.

We will introduce \emph{sub-Riemannian manifolds with transverse
  symmetries} according to the definition found in \cite{BaGa12}. This
is a special case of a sub-Riemannian manifold with a
metric-preserving complement and consists of sub-Riemannian manifolds
$(M, \calH, \tensorh)$ with an integrable complement $\calV$ spanned
by $\nu$ linearly independent vector fields $V_1, \dots, V_\nu$, such
that each of these vector fields is an infinitesimal isometry. The
subbundle $\calV$ will then be a metric preserving complement. If
$\tensorv$ is the metric on $\calV$ defined such that $V_1, \dots,
V_\nu$ forms an orthonormal basis, then~$\rnabla \tensorv^* =
0$. Hence, a complement spanned by transverse symmetries gives us a
totally geodesic foliation.

Since we assume that $\calH$ was bracket-generating, it follows that the span
$V_1, \dots, V_\nu$ actually is a subalgebra of $\Gamma(TM)$. Indeed, since $\calH$ is
bracket-generating, any function $f \in C^\infty(M)$ satisfying $Af = 0$ for all
$A \in \Gamma(\calH)$ must be a constant. Since $[V_i, V_j] = \sum_{s=1}^\nu f_s V_s$ must also
be an infinitesimal isometry for any $i,j$, we have that for all $A \in \Gamma(\calH)$,
$$0 = \pr_{\calV} [A, [V_i, V_j]] = \sum_{s=1}^\nu (A f_s) V_s.$$
It follows that each $f_s$ is constant, and the span of $V_1, \dots, V_\nu$ is a subalgebra.
If all of the vector fields are complete, we get a corresponding group action.
We will then be in the following case.

\begin{example} \label{ex:principal} Let $G$ be a Lie group with Lie
  algebra $\frakg$. Consider a principal bundle $G \to M
  \stackrel{\pi}{\to} B$ over a Riemannian manifold $(B,
  \widecheck{\tensorg})$ with $G$ acting on the right. An Ehresmann
  connection $\calH$ on $\pi$ is called principal if $\calH_x \cdot a
  = \calH_{x \cdot a}$. For any such principal connection $\calH$,
  define $\tensorh = \pi^* \widecheck{\tensorg}|_{\calH}.$ Then $G$
  acts on $(M, \calH, \tensorh)$ by isometries and so, for each $A
  \in \frakg$, the vector field $\sigma(A)$ defined by
$$\sigma(A)|_x = \left. \frac{d}{dt} \right|_{t=0} x \cdot \exp_G(At)$$ 
is an infinitesimal isometry. This is hence a
sub-Riemannian manifold with transverse symmetries. Let $\omega: TM
\to \frakg$ be {\it principal curvature form} of $\calH$, i.e. the
$\frakg$-valued one-form defined by
$$\ker \omega = \calH, \qquad \omega(\sigma(A)) = A.$$
Then for any inner product $\langle \newbullet, \newbullet \rangle$ on
$\frakg$, define a Riemannian metric $\tensorg$ by $\tensorg(v,v) =
\widecheck{\tensorg}(\pi_* v, \pi_* v) + \langle \omega(v),
\omega(v)\rangle$, $v \in TM.$ This Riemannian metric satisfies $\rnabla
\tensorg = 0$. We assume that the vertical part of $\tensorg$ is normalized
so that $\McalR =1$.

In general, the metric $\tensorg$ is not invariant under the group
action. The latter only hold if $\langle \newbullet, \newbullet
\rangle$ is bi-invariant inner product on $\frakg$. If such an inner
product exist, it induces a metric tensor $\widecheck{\tensorv}$ on
the vector bundle $\Ad(M) \to B$, where $\Ad(M)$ is the quotient of $M
\times \frakg$ by the action $(x, A) \cdot a = (x \cdot a,
\Ad(a^{-1})A).$ Any $\frakg$-valued from $\alpha$ on $M$ that vanish on $\calV$ and satisfies
$\alpha(Z_1 \cdot a, \dots, Z_j \cdot a) = \Ad(a^{-1}) \alpha(Z_1, \dots,
Z_j)$ can be considered as an $\Ad(M)$-valued form on $B$. This
includes the {\it curvature form} $\Omega(Z_1,Z_2) = d\omega(Z_1,Z_2) +
[\omega(Z_1),\omega(Z_2)] = - \omega(\calR(Z_1,Z_2)).$

Conversely, any section $F$ of $ \Ad(M)$ can be considered a function
$F: M \to \frakg$ satisfying $F(x \cdot a) = \Ad(a^{-1}) F(x).$ Define
a connection $\nabla^\omega$ on $\Ad(M)$ by formula
$\nabla_{\check{Z}}^\omega F = dF(h\check{Z})$ for $\check{Z} \in \Gamma(TB)$ and let $d_{\nabla^\omega}$ be the corresponding
covariant exterior derivative of $\Ad(M)$-valued forms on $B$. If we consider $\Omega$
as a $\Ad(M)$-valued 2-form and $\delta_{\nabla^\omega}$ as the formal dual of $d_{\nabla^\omega}$, then
$$\MRicHV = \sup_{B} \|\delta_{\nabla^\omega} \Omega\|_{\widecheck{\tensorg} 
\otimes \widecheck{\tensorv}}.$$ 
In particular, $\RicHV = 0$ if and only if $\delta_{\nabla^\omega}
\Omega = 0$, which is the definition of a Yang-Mills connection on
$\pi$.
\end{example}

\subsection{Invariant sub-Riemannian structures on Lie groups} \label{sec:LieGroup} 
Let $G$ be a Lie group with Lie algebra
$\frakg$. Let $G$ have dimension $n +\nu$. Choose a subspace $\frakh
\subseteq \frakg$ of dimension $n$ which generate the entire Lie
algebra, and give this subspace an inner product. Define a vector
bundle $\calH$ by left translation of $\frakh$. Use the inner product
on $\frakh$ to induce a left invariant metric $\tensorh$ on
$\calH$. This gives us a sub-Riemannian manifold $(G, \calH,
\tensorh)$ with a left-invariant structure, i.e. $G$ acts on the left
by isometries. This means that right invariant vector fields are
infinitesimal isometries, however, we cannot be sure that we have a
complement spanned by such vector fields. This is the case if and only
if there exist a subspace $\frakk$ of $\frakg$ such that
$\Ad(a)\frakk$ is a complement of $\frakh$ for any $a \in G$. Consider the special case when~$\frakk$ is a subalgebra of~$\frakg$
with corresponding subgroup~$K$ and $\frakg = \frakh \oplus \frakk$ as a vector space.
\begin{itemize}
\item[(a)] Define $\calV^l$ by left translation of $\frakk$. Then
  $\calV$ is a complement to $\calH$, but this is not in general
  spanned by infinitesimal isometries. If $K$ is closed, $\calH$ can
  be considered as an Ehresmann connection on $\pi: G \to G/K$, but it
  is not principal in general and we cannot necessarily consider the
  metric $\tensorh$ as lifted from~$G/K$.
\item[(b)] Define $\calV^r$ by right translation of $\frakk$. Then
  $\calV^r$ is spanned by infinitesimal isometries. It is a complement
  if and only if $\Ad(a) \frakk$ is a complement to $\frakh$ for every
  $a \in G.$ If the latter holds and $K$ is closed, $\calH$ can be
  considered as an Ehresmann connection on $\pi: G \to K\backslash G$.
\item[(c)] If $\frakk$ is an ideal (and $K$ a normal
  subgroup as a result) then $\calV^l = \calV^r$ is a complement spanned by
  infinitesimal isometries.
\end{itemize}

\begin{example}[Free step-2 nilpotent Lie groups] \label{ex:Nilpotent}
Let $\frakh$ be an inner product space of dimension $n$ and define $\frakk = \bigwedge^2 \frakh$ with the inner product induced by the product on $\frakh$. Define a Lie algebra $\frakg$ as the vector space
$\frakg = \frakh \oplus \frakk$ with brackets $[\newbullet, \newbullet]$ such that $\frakk$ is the center
and for any $A,B \in \frakh$,
$$[A,B] = A \wedge B \in \frakk.$$
Then $\frakg$ is a step~2 nilpotent Lie algebra of dimension ${n(n+1)}/{2}.$ Using the inner products on $\frakh$ and $\frakk$ and defining these two spaces as orthogonal, we get an inner product on $\frakg$.

Let $G$ be a simply connected Lie group with Lie algebra $\frakg$ and normal subgroup $K$ corresponding to $\frakk.$ Define $\calH$ and $\calV$ by left translation of respectively $\frakh$ and $\frakk$. Give $G$ a Riemannian metric by left translation of the inner product on $\frakg$. If we consider the inner product space $\frakh$ as a flat Riemannian manifold, then
$$\pi: G \to G/K \cong \frakh,$$
is a Riemannian submersion with $\ker \pi_* = \calV$ and with $\calH$ as an Ehresmann connection. Also $\widetilde \tensorg$ coincides with the privileged metric of Section~\ref{sec:PrivMetric}.

Since
$$\|\calR(v, \newbullet)\|_{\widetilde \tensorg^* \otimes \widetilde \tensorg}^2 = (n-1) \|\pr_{\calH} v\|_{\widetilde \tensorg}^2, \qquad v \in TM,$$
 we normalize the vertical part by defining
$$\tensorg = \pr_{\calH}^* \widetilde \tensorg|_{\calH} + \frac{1}{n-1} \pr_{\calV}^* \widetilde \tensorg|_{\calV}.$$
With respect to this normalized metric,
$$\McalR = 1, \qquad \mcalR = \frac{1}{\sqrt{n-1}}, \qquad \rnabla \tensorg = 0, \qquad \MRicHV = 0.$$
and $\rRicH = 0$ since $\frakh$ is flat. Defining $\srL$ as the sub-Laplacian of $\calV$
or equivalently the volume form of $\tensorg$, we obtain inequality
$$\sfGamma^{\tensorh^*+\ell \tensorv^*}_2(f) \geq \frac{1}{n} (\srL f)^2 
- \frac{1}{\ell} \sfGamma^{\tensorh^*}(f) + \frac{1}{2 (n-1)} \sfGamma^{\tensorv^*}(f).$$
\end{example}

\begin{remark}
  Let $\tensorg$ be a left invariant metric on the Lie group $G$, with
  $\tensorh$ and $\tensorv$ as the respective restrictions of
  $\tensorg$ to a left invariant subbundle $\calH$ and its orthogonal
  complement $\calV$. If $\calV$ is a metric-preserving
  complement of $(G, \calH, \tensorh)$, the conditions of
  Theorem~\ref{th:CD} hold, but we do not necessarily have $\rnabla
  \tensorg = 0$.
\end{remark}

\subsection{Sub-Riemannian manifolds with several metric-preserving
  complements} \label{sec:2Vcomp} The choice of metric preserving
complement may not be unique and give different results in general. We give two examples of this.
\begin{example}
Let $\frakg$ be a compact semi-simple Lie algebra of dimension $n$. The
term ``compact Lie algebra'' is here used to mean that the Killing
form $$(A,B) \mapsto \tr\,\ad(A)\ad(B)$$ is negative definite. We remark that when $\frakg$ is semi-simple,
then $[\frakg,\frakg] = \frakg$. Define and inner product
$$\langle A, B \rangle = - \frac{1}{4\rho} \tr\, \ad(A)\ad(B),$$
for some $\rho >0$. Note that if we use this inner product to induce a product
on $\End(\frakg) \cong \frakg^* \otimes \frakg$, then $\|\ad(A)\|^2 = 4\rho\|A\|^2$.

Let $G$ be a (compact) Lie group with with Lie algebra $\frakg$ and let $\widecheck{\tensorg}$ be
the Riemannian metric on $G$ obtained by left (or right) translation of $\langle \newbullet,\newbullet \rangle$.
From standard theory of bi-invariant metrics on Lie groups, it follows that $\Ric_{\widecheck{\tensorg}}(Z,Z) = \rho \|Z\|_{\widecheck{\tensorg}}^2$ pointwise for any vector field $Z$, so $G$ has Ricci lower bound $\rho >0$.

In what follows, we will always use the same symbol for an element in a Lie algebra and the corresponding left invariant vector field.
  \begin{enumerate}[(a)]
  \item Define
$$\frakh = \{ (A, 2 A) \in \frakg \oplus \frakg \, \colon \, A \in \frakg\}.$$
From our assumptions, we know that $\frakh + [\frakh, \frakh]= \frakg
\oplus \frakg$.  Define $\calH$ by left translation on $G \times
G$. Then $\calH$ is an Ehresmann connections of the following
submersions
$$\pi^j: G \times G \to G, \quad \pi^j(a_1, a_2) = a_j \quad j =1,2, (a_1, a_2) \in G.$$
Then the pullback of $\widecheck{\tensorg}$ by $\pi^1$ or of $\frac{1}{4} \widecheck{\tensorg}$ by $\pi^2$ gives us the same metric $\tensorh$ when restricted to $\calH$. We can write this as
$$\tensorh\left((A, 2 A), (A, 2 A) \right) = \langle A, A\rangle, \quad A \in \frakg.$$
The sub-Laplacian defined relative to either $\ker \pi_*^1$ or $\ker
\pi_*^2$ is
$$\srL = \sum_{i=1}^n (A_i, 2 A_i)^2$$
where $A_1, \dots, A_n$ is some orthonormal basis of $\frakg$. 
First consider, $\calV_1 = \ker \pi^1_* $ spanned by elements $(0,
A)$, $A \in \frakg$, with the metric $\tensorv_1$ given as
$$\|(0,A)\|_{\tensorv^1}^2 = \frac{1}{16\rho }  \langle A, A \rangle.$$
The constants we obtain are
\begin{align*}
&\calR( (A, 2 A), (B, 2 B) )= 2 (0, [A,B]), \quad \scrM_{\rnabla \tensorv_1^*} = 0,\\
&\McalR = 1, \quad \rRicH = \rho, \quad \MRicHV = \frac{3}{8} , \quad \mcalR = \frac{1}{2},
\end{align*}
giving us the inequality
\begin{equation} \label{firstCD} \sfGamma^{\tensorh^* + \ell \tensorv^*_1}_2(f) \geq \frac{1}{n} (\srL f)^2 + (\rho - c^{-1} - \ell^{-1} ) \sfGamma^{\tensorh^*}(f) +  \left( \frac{1}{8} - \frac{9c}{64} \right) \sfGamma^{\tensorv^*_1}(f),\end{equation}
for any $c > 0$. However, by choosing $\calV_2
= \ker \pi_*^2$, with metric
$$\|(A, 0)\|_{\tensorv^2}^2 = \frac{1}{ 4 \rho}  \langle A, A \rangle.$$
we obtain a better result
$$\calR( (A, 2 A), (B, 2 B)) = - ([A,B],0), \quad \scrM_{\rnabla \tensorv_2^*} =0,$$
$$\McalR =1, \quad \rRicH = 4 \rho, \quad \MRicHV = 0, \quad \mcalR = \frac{1}{\sqrt{2}},$$
so that
\begin{equation} \label{secondCD} \sfGamma^{\tensorh^* + \ell \tensorv^*_2}_2(f) \geq \frac{1}{n} (\srL f)^2 + (4 \rho - \ell^{-1} ) \sfGamma^{\tensorh^*}(f) + \frac{1}{4}  \sfGamma^{\tensorv^*_2}(f).\end{equation}
From Proposition~\ref{prop:SpectralGap} and Eq.~\eqref{secondCD}, we know that if $\lambda$ is any non-zero eigenvalue of $\srL$, then
$$\frac{ 4n }{5 n-1} \rho \leq -\lambda.$$
By contrast, we can not even obtain a spectral gap bound using inequality \eqref{firstCD} unless $\rho > {9}/{8}$, and even then, the result from using $\tensorv_2^*$ will give the better bound.

\item Consider $\real^n$ as the trivial Lie algebra. Let $I: \frakg \to \real^n$ be a bilinear map of vector
spaces. Define $\frakh$ as a subspace of $\frakg \times \real^n$ by
$(A, I(A))$, $A \in \frakg$. Consider $\frakg \times \real^n$ as the
Lie algebra of $G \times \real^n$, where $\real^n$ is considered
as a Lie group under~$+\,$. Define $\calH$
by left translation of $\frakh$. This is an Ehresmann connection
relative to both projections
$$\pi^1\colon G \times \real^n \to G, \qquad \pi^2\colon G \times \real^n \to \real^n.$$
Give $G$ the metric $\widecheck{\tensorg}$ and give $\real^n$ a flat
metric by the inner product $\langle I(\newbullet) , I(\newbullet)
\rangle$. Pulling back these metrics through respectively $\pi^1$ and
$\pi^2$, we obtain the same sub-Riemannian metric $\tensorh$ on
$\calH$ given by
$$\|(A, I(A))\|^2_{\tensorh} = \langle A, A \rangle,$$
even though the geometry of $G$ and $\real^n$ are very different. The
sub-Laplacians with respect to $\calV^1 = \ker\pi^1$ and $\calV^2 =
\ker\pi^2$ also coincide; it is given by
$$\srL = \sum_{i=1}^n (A, I(A))^2, \quad A_1, A_2, \dots, A_n \text{ an orthonormal basis of } \frakg.$$

We will leave out most of the calculations, and only state that if we
define $\tensorv_j$ on $\calV^j = \ker \pi^j_*$ by
$$\|(0, I(A))\|_{\tensorv_1}^2 = \frac{1}{4\rho} \langle A, A \rangle, \quad \|(A, 0)\|_{\tensorv_2}^2 = \frac{1}{4\rho} \langle A, A \rangle,$$
then these metrics are appropriately normalized and we get inequalities
\begin{align*}
  \sfGamma^{\tensorh^* + \ell \tensorv^*_1}_2(f) & \geq  \frac{1}{n} (\srL f)^2 + \left(\rho - \frac1c  - \frac1\ell\right) \sfGamma^{\tensorh^*}(f) +  \left(\frac{1}{4} - c \rho  \right)\sfGamma^{\tensorv_1^*}(f), \\
  \sfGamma^{\tensorh^* + \ell \tensorv^*_2}_2(f) & \geq \frac{1}{n}
  (\srL f)^2 - \left( \frac1c + \frac1\ell\right) \sfGamma^{\tensorh^*}(f) +
  \left(\frac{1}{4} - c \rho  \right) \sfGamma^{\tensorv_2^*}(f),
\end{align*}
that hold for any $c > 0$ and $\ell > 0$.
\end{enumerate}
\end{example}

\subsection{A non-integrable example}
As usual, we use the same symbol for an element of the Lie algebra and
the corresponding left invariant vector field. Consider the
complexification of $\mathfrak{su}(2)$ spanned over $\comp$ by
$$[A,B] = C, \quad [B,C] = A, \quad [C,A] = B.$$
Consider $\mathfrak{su}(2)^{\comp}$ as the Lie algebra of
$\mathrm{SL}(2,\comp)$ and on that Lie group, define (real) subbundles
of the tangent bundle
$$\calH = \spn \{i A, iB, iC, C \}, \quad \calV = \spn \{ A, B \}.$$
Define a Riemannian metric $\tensorg$ such that $iA, iB, iC, C,
\sqrt{2} A$ and $\sqrt{2} B$ forms an orthonormal basis. Define
$\tensorg|_{\calH} = \tensorh$ and $\tensorg|_{\calV} = \tensorv$ and
let $\roughL$ be the sub-Laplacian of the sub-Riemannian manifold
$(\calH, \tensorh)$ with respect to the complement $\calV$. It is
simple to verify that
$$\roughL = (iA)^2 + (iB)^2 + (iC)^2 +C^2.$$
We caution the reader that $(iA)^2$ stands for the left invariant vector field 
corresponding the element $iA$ applied twice; it is in no way equal to $-A^2.$

Let $\calR$ and $\overline{\calR}$ be respectively the curvature and the co-curvature of
$\calH$. Note that
\begin{align*}
&\rnabla \tensorh^* = 0, \quad \rnabla \tensorv^* = 0,\\
&\tr \overline{\calR}(v, \calR(v, \newbullet)) = 0 \quad \text{for any } v \in TM,\\
&\RicH(Y, Y) = -\frac{5}{2} \tensorg(iA, Y)^2 - \frac{5}{2} \tensorg(iB, Y)^2 -2 \tensorg(iC, Y)^2 + \frac{1}{2} \tensorg(C, Y)^2,\\
&\RicHV = 0, \quad \McalR = 1, \quad \mcalR =1.
\end{align*}
It follows that $\roughL$ is also the sub-Laplacian of the volume form
of $\tensorg$ with curvature-dimension inequality
$$\sfGamma^{\tensorh^* + \lambda \tensorv^*}_2(f) \geq \frac{1}{4} (\roughL f)^2 - \left(\frac{5}{2} + \frac{1}{\lambda} \right) \sfGamma^{\tensorh^*}(f) + \frac{1}{2} \sfGamma^{\tensorv^*}(f).$$

\section{Summary of Part~II} \label{sec:PartII} We include a section
here to illustrate what further results can be obtained from our
curvature-dimension inequality \eqref{CDstar} of Theorem~\ref{th:CD}.

Let $L$ be a second order operator. Let $X(x)$ be an
$L$-diffusion with $X(x) = x$ and maximal lifetime
$\tau(x)$. For bounded functions $f$, define $P_t f$ by $$P_tf(x) =
\expect[f(X_t(x)) 1_{t \leq \tau}].$$ Assume that $L$ satisfies
an inequality \eqref{CDstar} with respect to some $\tensorv^* \in
\Gamma(\Sym^2 TM)$. Let $C^\infty_b(M)$ denote the space of all smooth, bounded functions.
We will introduce two important conditions.
\begin{itemize}
\item[({\sf A})] $P_t 1 = 1$ and for any $f \in C_b^\infty(M)$ with $\sfGamma^{\tensorh^* + \tensorv^*}(f) \in C^\infty_b(M)$ and for every $T > 0$, we have
$$\sup_{t \in [0,T]} \| \sfGamma^{\tensorh^* + \tensorv^*}(P_t f) \|_{L^\infty}  < \infty.$$
\item[({\sf B})] For any $f \in C^\infty(M)$, we have
  $\sfGamma^{\tensorh^*}(f,\sfGamma^{\tensorv^*}(f)) =
  \sfGamma^{\tensorv^*}(f, \sfGamma^{\tensorh^*}(f)).$
\end{itemize}
We have the following concrete classes of sub-Riemannian manifolds
satisfying these conditions.
\begin{theorem}[\rm{Part~II, Proposition~3.3, Theorem~3.4}] \label{th:Cond} 
  \begin{enumerate}[\rm(a)]
  \item Assume that $\pi\colon M \to B$ is a fiber-bundle with compact
    fibers over a Riemannian manifold $(B, \widecheck{\tensorg})$. Let
    $\calH$ be an Ehresmann connection on $\pi$ and define $\tensorh=
    \pi^* \widecheck{\tensorg}|_{\calH}$. Assume that the metric
    $\metricd_{cc}$ of $(M, \calH, \tensorh)$ is complete and that the
    sub-Laplacian $\roughL$ of $\calV = \ker \pi_*$ satisfies
    \eqref{CDstar}. Then condition \emph{({\sf A})} holds.
  \item Let $M$ be a sub-Riemannian manifolds with an integrable,
    metric preserving complement~$\calV$. Assume that there exist a
    choice of $\tensorv$ on $\calV$ such that $\tensorg =
    \pr_{\calH}^* \tensorh + \pr_{\calV}^* \tensorv$ is a complete
    Riemannian metric and such that $\rnabla \tensorv^* = 0$. Then the
    sub-Laplacian $\srL$ of $\calV$ and the volume form of $\vol$ of
    $\tensorg$ coincide. Assume that $\srL$ satisfies the assumptions of
    Theorem~\textup{\ref{th:CD}}. Finally, assume that $\mcalR
    >0$. Then conditions \emph{({\sf A})} and \emph{({\sf B})} hold.
  \end{enumerate}
\end{theorem}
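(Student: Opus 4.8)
The plan is to treat the algebraic condition~({\sf B}) and the analytic condition~({\sf A}) separately, the latter splitting further into the conservativeness $P_t 1 = 1$ and the uniform gradient bound; the gradient bound is the main obstacle. Note at the outset that ({\sf B}) is concluded only in part~(b), where the totally geodesic hypothesis $\rnabla \tensorv^* = 0$ is available, whereas part~(a) asserts only~({\sf A}).

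For ({\sf B}) the argument is a pointwise computation. Writing $\sfGamma^{\tensorh^*}(f,h) = (\shh df)(h)$ and using that $\calV$ is metric-preserving (so $\rnabla \tensorh^* = 0$ by Lemma~\ref{lemma:basicsrnabla}(c)) together with the hypothesis $\rnabla \tensorv^* = 0$, the Leibniz rule for $\rnabla$ collapses both mixed expressions to first-order terms:
\[
\sfGamma^{\tensorh^*}(f, \sfGamma^{\tensorv^*}(f)) = 2\,(\rnabla_{\shh df} df)(\shv df), \qquad
\sfGamma^{\tensorv^*}(f, \sfGamma^{\tensorh^*}(f)) = 2\,(\rnabla_{\shv df} df)(\shh df).
\]
These agree by the symmetry of the Hessian up to torsion, $(\rnabla_A df)(Z) - (\rnabla_Z df)(A) = df(\calR(A,Z))$, applied to $A = \shh df$ and $Z = \shv df$: since $\shv df$ is vertical we have $\pr_{\calH} \shv df = 0$, whence $\calR(\shh df, \shv df) = \pr_{\calV}[\shh df, 0] = 0$. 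Thus ({\sf B}) holds, and this step uses neither $\mcalR > 0$ nor any completeness.

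For conservativeness in part~(a) I would project. By Proposition~\ref{prop:liftingL} the $\tfrac12 \roughL$-diffusion $X_t$ satisfies that $\pi(X_t)$ is a Brownian motion on $(B, \widecheck{\tensorg})$, and because the fibers are compact the explosion time of $X_t$ coincides with that of $\pi(X_t)$. The hypothesis \eqref{CDstar} forces $\RicH \geq \rho_1$, which by Proposition~\ref{prop:RicH}(c) descends to $\widecheck{\Ric} \geq \rho_1$ on $B$; completeness of $\metricd_{cc}$ together with compact fibers makes $\widecheck{\tensorg}$ complete (Hopf--Rinow makes $(M,\metricd_{cc})$ proper, and a proper fiber bundle with compact fibers has complete base), and a complete Riemannian manifold with Ricci bounded below is stochastically complete. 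Hence $\pi(X_t)$, and therefore $X_t$, has infinite lifetime, so $P_t 1 = 1$. In part~(b), where $\srL = \roughL$ coincides with the sub-Laplacian of the volume form of the complete metric $\tensorg$, the same conclusion follows from the non-explosion criterion already recorded in Section~\ref{sec:Definitions} (via Riemannian volume comparison and \cite{Stu94}).

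The remaining and hardest point is the estimate $\sup_{t \le T}\|\sfGamma^{\tensorh^* + \tensorv^*}(P_t f)\|_{L^\infty} < \infty$, where $\sfGamma^{\tensorh^* + \tensorv^*}(g) = \|\grad g\|_{\tensorg}^2$ is the full Riemannian gradient norm. In part~(b) I would run the Bakry--\'Emery semigroup-interpolation scheme on the functionals $s \mapsto P_s \sfGamma^{\tensorh^*}(P_{t-s}f)$ and $s \mapsto P_s \sfGamma^{\tensorv^*}(P_{t-s}f)$, differentiate in $s$, and insert \eqref{CDstar}; the crucial feature is that $\mcalR > 0$ permits choosing $c$ so small that $\rho_{2,0} > 0$ in \eqref{rhoSR}, which is exactly what absorbs the otherwise uncontrolled vertical energy $\sfGamma^{\tensorv^*}(P_t f)$, while condition~({\sf B}) and Proposition~\ref{prop:DoubleGamma} furnish the commutation identities that close the resulting system of differential inequalities. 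Making the interpolation rigorous requires justifying the differentiations and the a priori finiteness of the functionals, for which I would exhaust $M$ by compact sets (using properness from completeness) and argue with cutoffs, passing to the limit. In part~(a), where the totally geodesic hypothesis and hence ({\sf B}) are unavailable, I would instead proceed probabilistically, estimating the derivative of the stochastic flow of $X_t$ lifted to the frame bundle $\Oct(\calH) \odot \Oct(\calV)$ of Section~\ref{sec:Definitions}: this flow solves a linear covariant equation whose coefficients are the bounded curvature quantities $\McalR, \MII, \MRicHV$, and compactness of the fibers prevents its explosion, so Gronwall's inequality gives $\|\grad P_t f\|_{L^\infty} \le C(T)\|\grad f\|_{L^\infty}$ uniformly on $[0,T]$. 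The central difficulty in both parts is controlling the vertical component of the gradient, which is not dominated by the horizontal Dirichlet energy; this is precisely what the positivity $\rho_{2,0} > 0$ (coming from $\mcalR > 0$) in~(b), and the fiber compactness in~(a), are there to remedy.
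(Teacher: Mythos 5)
A preliminary caveat: this paper does not actually prove Theorem~\ref{th:Cond} anywhere --- it is imported wholesale from Part~II (Proposition~3.3 and Theorem~3.4 there) --- so there is no in-paper proof to compare against, and your argument can only be judged on its own terms. On those terms, part of it is genuinely correct. Your proof of ({\sf B}) is complete and right: with $\rnabla \tensorh^* = 0$ (Lemma~\ref{lemma:basicsrnabla}) and the hypothesis $\rnabla \tensorv^* = 0$, the two mixed expressions collapse to $2(\rnabla_{\shh df} df)(\shv df)$ and $2(\rnabla_{\shv df} df)(\shh df)$, and these agree because $\calR(\shh df, \shv df) = 0$ ($\calR$ only sees horizontal projections). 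Your conservativeness argument in (a) is also essentially sound: $\pi(X_t)$ is Brownian motion on $B$, properness of $\pi$ (compact fibers) identifies the two lifetimes, and testing \eqref{CDstar} on lifts $\check f \circ \pi$ --- for which $\sfGamma^{\tensorv^*}$ and $\sfGamma_2^{\tensorv^*}$ vanish identically --- yields the Riemannian curvature-dimension inequality on $B$ and hence $\widecheck{\Ric} \geq \rho_1$; that lifting computation is the missing substance behind your bare assertion that \eqref{CDstar} ``forces'' $\RicH \geq \rho_1$, and it is an easy repair.

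The genuine gaps are in what remains, which is the hard content of the theorem. First, in (b) your Bakry--\'Emery interpolation $s \mapsto P_s \sfGamma^{\tensorh^*}(P_{t-s}f)$, $s \mapsto P_s \sfGamma^{\tensorv^*}(P_{t-s}f)$ is circular as a proof of ({\sf A}): differentiating these functionals and applying the sub-Markovian $P_s$ to them already presupposes that $\sfGamma^{\tensorh^*+\tensorv^*}(P_t f)$ is bounded, which is exactly the conclusion sought --- this circularity is precisely why Baudoin and Garofalo \cite{BaGa12} must postulate the analogue of ({\sf A}) as a standing hypothesis. Your proposed fix, ``exhaust by compact sets and argue with cutoffs,'' names the difficulty rather than resolving it: the cutoff error terms involve the very quantity $\sfGamma^{\tensorh^*+\tensorv^*}(P_{t-s}f)$ that is a priori unbounded, and you supply no mechanism (this is where $\mcalR > 0$, i.e.\ control of vertical derivatives by antisymmetrized horizontal Hessians, has to do real work) for closing that loop. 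Second, in (a) the derivative-flow argument rests on the claim that the covariant equation for the flow has ``coefficients $\McalR, \MII, \MRicHV$'': in fact that equation is driven by the full tensors $R^{\rnabla}$ and $T^{\rnabla} = -\calR$ (and derivatives) contracted against the noise, and \eqref{CDstar}, being an inequality about traced quantities ($\sfGamma_2$), does not obviously yield global operator-norm bounds on these tensors over a noncompact $M$; moreover ``compactness of the fibers prevents its explosion'' is a non sequitur, since the derivative flow lives in $TM$ and can grow without $X_t$ exploding --- the coefficient bound you have not established is exactly what Gronwall would need. Third, in (b) the appeal to the non-explosion remark of Section~\ref{sec:Definitions} for $P_t 1 = 1$ is unjustified: that remark requires the full Riemannian Ricci curvature of $\tensorg$ to be bounded below, whereas the hypotheses of Theorem~\ref{th:CD} bound $\RicH$, $\calR$, $\MRicHV$ and $\rnabla \tensorv^*$ but say nothing about the intrinsic curvature of the leaves, hence nothing about $\Ric_{\tensorg}$ in vertical directions. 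So conditions ({\sf B}), and $P_t1=1$ in case (a), can be considered proved, but the uniform gradient bound in both cases --- and conservativeness in case (b) --- remain open in your write-up.
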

The result of Theorem~\ref{th:Cond}~(b) is also valid for non-integrable complements
of the type described in Section~\ref{sec:NotIntegrable}. For the special case when $\tensorh^*$ is a
complete Riemannian metric, $\tensorv^* = 0$ and $L$ is the
Laplacian, ({\sf A}) always holds when the curvature-dimension
inequality holds. For this reason, we expect that the condition ({\sf
  A}) will hold in more cases than the ones listed here.

Combined with our generalized curvature-dimension inequality, we have
the following identities.
\begin{theorem}
  \emph{(Part~II, Proposition~3.8)} Assume that $L$ satisfies
  \eqref{CDstar} with respect to $\tensorv^*$ and that \emph{({\sf
      A})} holds. Assume also that $L$ is symmetric with respect to
  the volume form $\vol$, i.e. $\int_M fLg \dvol = \int_M gLf \dvol$ for any
  $f,g \in C_c^\infty(M)$. Finally, assume that $\tensorg^* =
  \tensorh^* + \tensorv^*$ is the co-metric of a complete Riemannian
  metric.
  \begin{enumerate}[\rm(a)]
  \item If $\rho_1 \geq \rho_{2,1}$ and $\rho_{2,0}>-1$, then for any compactly supported $f \in C_c^\infty(M)$,
$$\| \sfGamma^{\tensorh^*}( P_t f) \|_{L^1} \leq e^{- \alpha t} \| \sfGamma^{\tensorh^*}(f)\|_{L^1}, \quad \alpha := \frac{\rho_{2,0} \rho_1 + \rho_{2,1}}{\rho_{2,0} +1}.$$
Furthermore, if $\alpha > 0$ then $\vol(M) < \infty$.
\item Assume that the conditions in \emph{(a)} hold with $\alpha
  >0$. Then for any $f \in C_c^\infty(M)$,
$$\| f- f_M \|_{L^2}^2 \leq \frac{1}{\alpha} \int_M \sfGamma^{\tensorh^*}(f) \dvol$$
where $f_M = {\vol(M)}^{-1} \int_M f \dvol$. As a consequence if
$\lambda$ is a non-zero eigenvalue of~$L$, then $\alpha \leq - \lambda.$
\end{enumerate}
\end{theorem}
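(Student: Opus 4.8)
\emph{Overview.} The plan is to prove (a) by a Bakry--Émery interpolation combined with a Grönwall argument, and then to obtain the finite-volume statement and part (b) as consequences. Throughout, $P_t$ is the semigroup generated by $\tfrac12 L$, and condition \emph{({\sf A})} (with $P_t1=1$) together with the symmetry of $L$ guarantees that $P_t$ is $\vol$-symmetric and $\vol$-invariant, so that $\int_M P_tG\,\dvol=\int_M G\,\dvol$ and all integrations by parts below are legitimate.

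\emph{Reduction of (a) to an infinitesimal estimate.} Differentiating and integrating by parts via the symmetry of $L$,
\[
\tfrac{d}{dt}\,\|\sfGamma^{\tensorh^*}(P_tf)\|_{L^1}=\int_M\sfGamma^{\tensorh^*}(P_tf,LP_tf)\,\dvol=-\int_M\sfGamma^{\tensorh^*}_2(P_tf)\,\dvol,
\]
so by the semigroup property it would suffice to establish the pointwise-in-$t$ bound $\int_M\sfGamma^{\tensorh^*}_2(g)\,\dvol\ge\alpha\int_M\sfGamma^{\tensorh^*}(g)\,\dvol$ along the flow, after which Grönwall yields the claimed exponential decay. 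Integrating \eqref{CDstar} over $M$, discarding the nonnegative $\tfrac1n(Lg)^2$ term, and optimizing the remaining expression over the free parameter $\ell>0$ singles out $\ell^\ast=(1+\rho_{2,0})/(\rho_1-\rho_{2,1})$ and produces exactly $\alpha=(\rho_{2,0}\rho_1+\rho_{2,1})/(\rho_{2,0}+1)$; the hypotheses $\rho_{2,0}>-1$ and $\rho_1\ge\rho_{2,1}$ are precisely what make $\ell^\ast$ admissible, with the limit $\ell\to\infty$ covering the borderline case $\rho_1=\rho_{2,1}$.

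\emph{Main obstacle.} I expect the difficulty to lie in the vertical energy $\int_M\sfGamma^{\tensorv^*}(P_tf)\,\dvol$: integrating \eqref{CDstar} naively bounds $\int_M\sfGamma^{\tensorh^*}_2$ only up to this auxiliary quantity (and up to the vertical $\Gamma_2$ excess), which is not a priori comparable to $\int_M\sfGamma^{\tensorh^*}$. The device to overcome this is to run the interpolation on the combined functional
\[
s\longmapsto P_s\big(\sfGamma^{\tensorh^*}(P_{t-s}f)+a(s)\,\sfGamma^{\tensorv^*}(P_{t-s}f)\big),
\]
with a weight $a(s)$ tuned so that, after applying \eqref{CDstar} with $\ell=a(s)$ and dropping $(LP_{t-s}f)^2$, its $s$-derivative is bounded below by $\alpha$ times the functional itself; two coefficient-matching conditions fix both $\alpha$ and the admissible range of $a(s)$. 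Integrating over $M$ and using $P_t$-invariance collapses the vertical contribution into a single nonnegative boundary term, which the sign hypotheses push onto the favourable side, leaving the pure horizontal estimate. Controlling this weight and the boundary term under \emph{({\sf A})} is the technical heart of the argument.

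\emph{Finite volume and part (b).} Granting (a), the finite-volume claim follows by applying the gradient decay to a sequence of compactly supported cutoff functions exhausting $M$ whose horizontal energy tends to zero; such cutoffs exist since $\metricd_{cc}$ is complete and $\calH$ is bracket-generating, and the decay then forces the $L^2$-limit of the evolved cutoffs to be a nonzero constant, which lies in $L^2(M,\vol)$ only if $\vol(M)<\infty$. For (b) I would use
\[
\tfrac{d}{dt}\,\|P_tf-f_M\|_{L^2}^2=-\int_M\sfGamma^{\tensorh^*}(P_tf)\,\dvol,
\]
integrate over $[0,\infty)$, and invoke $P_tf\to f_M$ in $L^2$ together with (a) to get $\|f-f_M\|_{L^2}^2=\int_0^\infty\|\sfGamma^{\tensorh^*}(P_tf)\|_{L^1}\,dt\le\alpha^{-1}\int_M\sfGamma^{\tensorh^*}(f)\,\dvol$. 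Specializing to an eigenfunction $Lf=\lambda f$ with $\lambda\ne0$ gives $f_M=0$ and $\int_M\sfGamma^{\tensorh^*}(f)\,\dvol=-\lambda\|f\|_{L^2}^2$, so the Poincaré inequality reads $\|f\|_{L^2}^2\le-\tfrac\lambda\alpha\|f\|_{L^2}^2$, that is $\alpha\le-\lambda$.
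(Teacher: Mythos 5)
You have the right skeleton for the Gr\"onwall part: the weighted functional, the optimal weight $\ell^*=(1+\rho_{2,0})/(\rho_1-\rho_{2,1})$, the resulting constant $\alpha$, and the reduction of (b) to (a) are all correct in outline. But the two steps that carry the actual content of the theorem are broken. The first is the removal of the vertical term in (a). Every time you invoke \eqref{CDstar} with $\ell=a(s)$ you pay the term $-a(s)^{-1}\sfGamma^{\tensorh^*}$, so your coefficient matching forces $\rho_1-1/a(s)\ge\alpha$, i.e. $a(s)\ge 1/(\rho_1-\alpha)=\ell^*>0$ for \emph{every} $s$: the weight can never be made to vanish at the endpoint carrying the initial data. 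Consequently the interpolation can only ever produce
$$\int_M\sfGamma^{\tensorh^*}(P_tf)\,\dvol\ \le\ e^{-\alpha t}\int_M\left(\sfGamma^{\tensorh^*}(f)+\ell^*\,\sfGamma^{\tensorv^*}(f)\right)\dvol ,$$
and the extra term $\ell^*\int_M\sfGamma^{\tensorv^*}(f)\,\dvol$ sits on the initial-data side, i.e. on the \emph{unfavourable} side; the ``nonnegative boundary term pushed onto the favourable side'' that you appeal to does not exist. (Your earlier claim that integrating \eqref{CDstar} and optimizing in $\ell$ yields $\int\sfGamma^{\tensorh^*}_2(g)\ge\alpha\int\sfGamma^{\tensorh^*}(g)$ fails for the same reason: the terms $\ell\int\sfGamma_2^{\tensorv^*}$ and $(\rho_{2,0}+\rho_{2,1}\ell)\int\sfGamma^{\tensorv^*}$ do not cancel, since \eqref{CDstar} gives no upper bound on $\sfGamma_2^{\tensorv^*}$.) The missing idea is spectral, and some form of it is unavoidable because statement (a) is equivalent, via spectral calculus, to $\mathrm{spec}(-L)\subseteq\{0\}\cup[\alpha,\infty)$: by completeness $L$ is essentially self-adjoint, so write $-L=\int_0^\infty\lambda\,dE_\lambda$ and $\mu_f(d\lambda)=d\|E_\lambda f\|^2_{L^2}$; then $\int_M\sfGamma^{\tensorh^*}(P_tf)\,\dvol=\int_0^\infty\lambda e^{-\lambda t}d\mu_f(\lambda)$, and the mixed bound above, holding for all $t>0$, forces $\mu_f((0,\alpha))=0$ for every $f\in C_c^\infty(M)$; once the spectrum is known to avoid $(0,\alpha)$, the inequality $\int\lambda e^{-\lambda t}d\mu_f\le e^{-\alpha t}\int\lambda\,d\mu_f$ is exactly the clean statement of (a). This bootstrap is absent from your proposal.

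The second broken step is the finite-volume claim. Compactly supported cutoffs $\chi_k\uparrow 1$ with $\int_M\sfGamma^{\tensorh^*}(\chi_k)\,\dvol\to0$ do \emph{not} exist on a general complete infinite-volume space: on $\real^3$ the standard cutoffs have Dirichlet energy of order $k$, and the existence of such a sequence is a parabolicity (recurrence) property, not a consequence of completeness and the bracket-generating condition. Worse, the implication you assert (``granting (a), finite volume follows'') is false as stated: hyperbolic space with $\tensorv^*=0$ is complete, symmetric, stochastically complete, satisfies condition ({\sf A}), and satisfies the inequality of (a) with $\alpha=(n-1)^2/4>0$ because its spectrum lies in $[(n-1)^2/4,\infty)$ --- yet its volume is infinite. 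So $\vol(M)<\infty$ cannot be a formal consequence of the decay inequality plus the soft hypotheses; its proof must re-use \eqref{CDstar} itself (observe that $\alpha>0$ together with $\rho_1\ge\rho_{2,1}$ and $\rho_{2,0}>-1$ forces $\rho_1>0$, which hyperbolic space violates). Relatedly, in (b) the convergence $P_tf\to f_M$ in $L^2$ is not free: it follows from the spectral gap together with the fact that the $L^2$-kernel of $L$ consists of constants (hypoellipticity plus bracket-generation plus $\vol(M)<\infty$), so it too is only available after the spectral argument above has been carried out.
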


There are also other inequalities which do not require that $L$ is
symmetric with respect to a volume form, see e.g. Part~II, Proposition~3.6.

For the case when both ({\sf A}) and ({\sf B}) hold and $L$
satisfies \eqref{CD} of Section~\ref{sec:NablaMetric0} with $\rho_2 >0$,
we can use the results from \cite{BaGa12,BaBo12,BBG14}. In particular,
with some extra computation, we can conclude the following.

\begin{corollary}[\rm Part~II, Proposition~5.1, Proposition~5.3]\
Let $\srL$ is as in \emph{Theorem \ref{th:Cond}~(b)}. Let
$$\kappa = \tfrac{1}{2} \mcalR \rRicH - \MRicHV^2 .$$
\begin{enumerate}[\rm(a)]
\item Assume that $\kappa > 0$. Then $M$ is compact, and for any $f\in C^\infty(M)$,
$$\| f- f_M \|_{L^2}^2 \leq \frac{1}{\alpha} \int_M \sfGamma^{\tensorh^*}(f) \dvol$$
where $f_M = {\vol(M)}^{-1} \int_M f \dvol$ and
$$\alpha  := \left( \frac{2 \kappa}{2 \MRicHV+\mcalR \sqrt{2 \rRicH + 2\kappa }} \right)^2.$$
\item Assume that $\kappa \geq 0$ and define
$$N = \frac{n}{4} \frac{ \left(\sqrt{2 \rRicH + \kappa} + \sqrt{\rRicH+ \kappa }\right)^2}{\kappa}, \qquad D = \frac{\sqrt{(\kappa+ \rRicH)(\kappa  + 2\rRicH)}}{\kappa}.$$ 
Let $p_t(x, y)$ be the heat
  kernel of $\frac{1}{2} \srL$ with respect to $\vol$. Then
$$p_t(x,x) \leq \frac{1}{t^{N/2}} p_1(x,x)$$
where $x \in M$ and $0 \leq t \leq 1$.
Furthermore, for any $0 < t_0 < t_1$, any non-negative smooth bounded function $f \in C^\infty_b(M)$ and points $x,y \in M$,
$$P_{t_0} f(x) \leq (P_{t_1}f)(y) \left(\frac{t_1}{t_0}\right)^{N/2} \!\!\!\exp\left( D \frac{ \metricd(x,y)^2}{2 (t_1-t_0)}  \right).$$
If $\kappa =0$, we interpret the quotient ${\kappa}/{\rRicH}$ as
$\mcalR^2/2$.
\end{enumerate}
\end{corollary}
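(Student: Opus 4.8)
The plan is to derive both parts from the generalized curvature-dimension inequality \eqref{CD} by feeding it into the heat-semigroup machinery of Baudoin and Garofalo \cite{BaGa12,BaBo12,BBG14}. First I would record that the hypotheses needed to run that machinery are in place. Since $\srL$ is as in Theorem~\ref{th:Cond}~(b), the conditions ({\sf A}) and ({\sf B}) hold; moreover $\srL$ is symmetric with respect to the volume form $\vol$ of $\tensorg$ (by Section~\ref{sec:CompVSvol}, since $\rnabla \tensorv^* = 0$), and $\tensorg^* = \tensorh^* + \tensorv^*$ is complete. By Theorem~\ref{th:CD} together with \eqref{CDII0}, $\srL$ satisfies \eqref{CD} with
$$\rho_1 = \rRicH - c^{-1}, \qquad \rho_2 = \tfrac12 \mcalR^2 - c\,\MRicHV^2,$$
for every $c > 0$; here $c$ is a free parameter that I will tune at the very end to extract the sharp constants.

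For part (a), I would first note that the set of $c$ making both $\rho_1 > 0$ and $\rho_2 > 0$ is nonempty exactly when $\kappa > 0$ (this is what forces $c^{-1} < \rRicH$ and $c\,\MRicHV^2 < \tfrac12\mcalR^2$ to be simultaneously solvable). Under these sign conditions the generalized Bonnet--Myers theorem of \cite{BaGa12} forces $M$ to be compact. The spectral gap is then the sub-Riemannian Lichnerowicz--Obata inequality of \cite{BBG14}: using ({\sf A}), ({\sf B}) and \eqref{CD}, one integrates the $\sfGamma_2$-inequality against an eigenfunction and exploits Proposition~\ref{prop:DoubleGamma} to obtain a dimension-free lower bound on the first nonzero eigenvalue, sharper than the one coming from Proposition~\ref{prop:SpectralGap}. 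Writing this bound as a function of $c$ and optimizing (the optimal $c$ balances the contributions of $\rRicH$ and $\MRicHV$) collapses the pair $(\rho_1,\rho_2)$ into the single invariant $\kappa$ and yields exactly $\alpha$. The resulting estimate $\alpha \le -\lambda$ for every nonzero eigenvalue $\lambda$ is equivalent, by spectral decomposition with $f_M$ the projection onto the constants, to the stated $L^2$ Poincaré inequality.

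For part (b), the idea is to establish a parabolic Harnack inequality through Li--Yau type gradient estimates for $P_t$. Running the semigroup-interpolation Bochner argument of \cite{BaGa12,BaBo12} on the functionals $s \mapsto P_s \sfGamma^{\tensorh^*}(P_{t-s} f)$ and $s \mapsto P_s \sfGamma^{\tensorv^*}(P_{t-s} f)$, and controlling the cross terms with \eqref{CD} and condition ({\sf B}), produces a differential inequality whose integration along a space-time curve gives the Harnack estimate with effective dimension $N$ and constant $D$. The on-diagonal bound $p_t(x,x) \le t^{-N/2} p_1(x,x)$ then follows by the standard device of applying the Harnack inequality to $p_\bullet(x,\cdot)$ with $y = x$, $t_0 = t$, $t_1 = 1$, where the exponential factor becomes $1$ because $\metricd(x,x) = 0$. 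The explicit shape of $N$ and $D$, including the square-root combinations of $\rRicH$ and $\kappa$, again emerges from optimizing the free parameter $c$ (and the auxiliary Li--Yau parameter) in \eqref{CD}.

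The main obstacle I anticipate is not the formal manipulation but the verification that the Baudoin--Garofalo formalism, originally built for sub-Riemannian manifolds with transverse symmetries, applies in our broader class. The delicate points are the analytic conditions ({\sf A}) (a stochastic-completeness and gradient-preservation property) and ({\sf B}) (the commutation $\sfGamma^{\tensorh^*}(f,\sfGamma^{\tensorv^*}(f)) = \sfGamma^{\tensorv^*}(f,\sfGamma^{\tensorh^*}(f))$), which are exactly what Theorem~\ref{th:Cond}~(b) supplies under the hypotheses that $\tensorg$ be complete, $\rnabla \tensorv^* = 0$, and $\mcalR > 0$; without them the iterated semigroup estimates do not close. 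Granting these, the remaining work is the purely computational optimization over $c$ that converts the two-parameter family \eqref{CD} into $\kappa$ and the sharp constants $\alpha$, $N$, $D$.
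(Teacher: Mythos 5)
Your proposal is correct and follows essentially the same route as the paper: the corollary is obtained precisely by feeding \eqref{CD} with $\rho_2>0$, together with conditions ({\sf A}) and ({\sf B}) supplied by Theorem~\ref{th:Cond}~(b) and the symmetry/completeness hypotheses, into the semigroup machinery of \cite{BaGa12,BaBo12,BBG14}, with the constants $\alpha$, $N$, $D$ emerging from the ``extra computation'' of optimizing the free parameter $c$ in \eqref{CDII0}. The detailed execution is deferred to Part~II (Propositions~5.1 and~5.3), and your outline is a faithful reconstruction of that plan.
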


\bibliographystyle{abbrv}      
\bibliography{Bibliography.bib}

\end{document}